\setlist[enumerate]{label = (\alph*)}
\numberwithin{equation}{section}
\theoremstyle{definition}
\newtheorem{theorem}{Theorem}[section]
\newtheorem{conjecture}[theorem]{Conjecture}
\newtheorem{corollary}[theorem]{Corollary} 
\newtheorem{definition}[theorem]{Definition} 
\newtheorem{example}[theorem]{Example}
\newtheorem{lemma}[theorem]{Lemma}
\newtheorem{proposition}[theorem]{Proposition}
\newtheorem{question}[theorem]{Question}
\DeclareMathOperator\ch{char}
\DeclareMathOperator\der{Der}
\DeclareMathOperator\gr{gr}
\DeclareMathOperator\GKdim{GKdim}
\DeclareMathOperator\gldim{gldim}
\DeclareMathOperator\Kdim{Kdim}
\DeclareMathOperator\Maxspec{MaxSpec}
\DeclareMathOperator\Mod{Mod}
\DeclareMathOperator\PMod{PMod}
\DeclareMathOperator\SW{Sw}
\DeclareMathOperator\Tdeg{Tdeg}
\DeclareMathOperator\Trdeg{Trdeg}
\newcommand\derh{\mathrm{Der}^\mathrm{H}}
\newcommand\pder{\mathrm{PDer}}
\newcommand\lnd{\mathrm{LND}}
\newcommand\plnd{\mathrm{PLND}}
\newcommand\plndh{\plnd^\mathrm{H}}
\newcommand\pml{\mathrm{PML}}
\newcommand\pcnt{\mathcal{Z}_P}
\newcommand\inv{^{-1}}
\newcommand\niso{\ncong}
\newcommand\iso{\cong}
\newcommand\tensor{\otimes}
\newcommand\kk{{\Bbbk}}
\newcommand\cP{\mathcal P}
\newcommand\DD{\mathbb D}
\newcommand\FF{\mathbb F}
\newcommand\NN{\mathbb N}
\newcommand\QQ{\mathbb Q}
\newcommand\ZZ{\mathbb Z}
\newcommand\fg{\mathfrak g}
\newcommand\fm{\mathfrak m}
\newcommand\fp{\mathfrak p}
\newcommand\fsl{\mathfrak{sl}}
\newcommand\restr[2]{{
  \left.\kern-\nulldelimiterspace 
  #1 
  \vphantom{\big|} 
  \right|_{#2} 
}}
\begin{document}

\title{Cancellation and skew cancellation for Poisson algebras}

\author[Gaddis]{Jason Gaddis}
\address{Miami University, Department of Mathematics, 301 S. Patterson Ave., Oxford, Ohio 45056} 
\email{gaddisj@miamioh.edu}

\author[Wang]{Xingting Wang}
\address{Howard University, Department of Mathematics, 204 Academic Service Building B, Washington DC, 20059} 
\email{xingting.wang@howard.edu}

\author[Yee]{Daniel Yee}
\address{Bradley University, Department of Mathematics, 1501 W. Bradley Ave., Peoria, IL 61625}
\email{dyee@bradley.edu}

\subjclass[2010]{14R10, 17B63, 16W25}
\keywords{Zariski cancellation problem, Poisson algebra, Locally nilpotent derivation, Divisor subalgebra, Stratiform algebra}
\date{\today}
\begin{abstract}
We study the Zariski cancellation problem for Poisson algebras in three variables. In particular, we prove those with Poisson bracket either being quadratic or derived from a Lie algebra are cancellative. We also use various Poisson algebra invariants, including the Poisson Makar-Limanov invariant, the divisor Poisson subalgebra, and the Poisson stratiform length, to study the skew cancellation problem for Poisson algebras.  
\end{abstract}

\maketitle

\section{Introduction}
The original ``Zariski cancellation problem" was asked by Zariski in 1949. Geometrically, the question is whether an isomorphism $V\times \mathbb A^1\cong W\times \mathbb A^1$, with two $\kk$-affine varieties $V$ and $W$, $\kk$ a field, implies that $V \iso W$ as $\kk$-affine varieties. A noncommutative version of the Zariski cancellation problem was investigated in as early as 1970s by Coleman-Enochs \cite{CE71} and Brewer-Rutter \cite{BR72}, and was re-introduced by Bell and Zhang in 2017 \cite{BZ2}. Recently, the Zariski cancellation problem has attracted significant interest in the noncommutative setting of coordinate rings of noncommutative algebraic varieties in noncommutative projective algebraic geometry; see \cite{BHHV,BZ2,Bcancel,LWZ1,LWZ2,TZZ}. 

The notion of the Poisson bracket was introduced by French mathematician Sim\'eon Denis Poisson in the search for integrals of motion in Hamiltonian mechanics \cite{CAP2013}. Nowadays, it is deeply entangled with noncommutative geometry, integrable systems, topological field theories, and representation theory of noncommutative algebras.  Very recently, in the spirit of deformation quantization, the cancellative properties for Poisson algebras were introduced by the first and second authors in \cite{GXW}. By applying the tools of nilpotent derivations introduced by Makar-Limanov and noncommutative discriminant introduced by Zhang and et. al. in the setting of Poisson algebras, various families of Poisson algebras are shown to be cancellative therein. The Makar-Limanov invariant was originally introduced by Makar-Limanov \cite{Ma96} and has become a very useful invariant in commutative algebra. The discriminant method was introduced in \cite{CPWZ15, CPWZ16} to calculate the automorphism groups for a class of noncommutative algebras. But several questions are also raised regarding whether or not the cancellation properties hold for other well-known Poisson algebras including the Poisson polynomial algebra in three variables with nontrivial Jacobian bracket \cite[Question 8.4]{GXW}.

This present paper continues the study of the Zariski cancellation problem for Poisson algebras in various new directions. 

In Section \ref{section3}, we investigate the cancellation properties of Poisson polynomial algebras in three variables. First of all, in Theorem \ref{thm.filtered} we classify all filtered quadratic Poisson algebras with two variables, which yields those quadratic Poisson algebras with three variable as their homogenizations in Theorem \ref{thm.homog}. Our classification extends the work of Dumas \cite{dumas} on the classification of filtered quadratic Poisson algebras up to rational equivalence. Also the classifications align with that of filtered Artin-Schelter regular algebras of dimension two and that of their homogenizations \cite{Gtwogen}. Next, by analyzing the Poisson center of an arbitrary connected graded Poisson domain (Lemma \ref{lem.notkt} and Lemma \ref{lem.kt}), we are able to prove Poisson polynomial algebras in three variables with Poisson bracket either being quadratic or derived from a Lie algebra are cancellative. 

\begin{theorem}
Let $\kk$ be a base field that is algebraically closed of characteristic zero. 
\begin{enumerate}
    \item (Theorem \ref{thm.3var} and Corollary \ref{cor:veronese}) Let $A=\kk[x,y,z]$ be a quadratic Poisson algebra with nontrivial Poisson bracket. Then the $d$th Veronese Poisson subalgebra $A^{(d)}$ is Poisson cancellative for any $d\ge 1$. In particular, $A$ is Poisson cancellative. 
    \item (Theorem \ref{thm:Lie}) Let $\mathfrak g$ be a non-abelian Lie algebra of dimension $\le 3$. Then the symmetric algebra $S(\mathfrak g)$ on $\mathfrak g$ together with the Konstant-Kirillov bracket is Poisson cancellative. 
\end{enumerate}
\end{theorem}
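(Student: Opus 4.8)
The plan is to reduce both parts to a structural analysis of the Poisson center of a connected graded Poisson domain, via the dichotomy recorded in Lemmas \ref{lem.notkt} and \ref{lem.kt}. First I would fix gradings. In part (a), since the bracket of $A=\kk[x,y,z]$ is quadratic, each $\{x_i,x_j\}$ is homogeneous of degree $2=\deg x_i+\deg x_j$, so $A$ is a connected graded Poisson domain whose bracket has degree $0$; the $d$th Veronese $A^{(d)}$, regraded so that its degree-$k$ piece is $A_{dk}$, is again such a domain. In part (b) the Kostant--Kirillov bracket is linear, so $\{x_i,x_j\}$ is homogeneous of degree $1$ and $S(\fg)$ is a connected graded Poisson domain whose bracket has degree $-1$. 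In each case the Poisson center $\pcnt(-)$ is a graded characteristic subalgebra, and the identity I would establish first is $\pcnt(R[t])=\pcnt(R)[t]$ for a central variable $t$ of degree $1$; this is what lets the center detect the adjoined variable.

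The engine is then the cancellation criterion of the two center lemmas. Assuming $A^{(d)}[s]\cong B[t]$ as Poisson algebras and passing to Poisson centers, which are preserved by any Poisson isomorphism, yields $\pcnt(A^{(d)})[s]\cong\pcnt(B)[t]$. I would split according to whether $\pcnt(A^{(d)})$ is a polynomial ring on a single degree-$1$ generator---the ``$\kk[t]$'' case of Lemma \ref{lem.kt}---or not, the case of Lemma \ref{lem.notkt}, and in each show the adjoined variable is intrinsically pinned down, forcing $A^{(d)}\cong B$. For the quadratic structures I would feed in the classification of Theorem \ref{thm.homog}: each nontrivial bracket carries a homogeneous Casimir $\Omega$ (of degree $3$ in the Jacobian case), so $\pcnt(A)=\kk[\Omega]$ has no degree-$1$ element and we land in Lemma \ref{lem.notkt}, whence the degree-$1$ Poisson center of $A[s]$ is exactly $\kk s$ and $s$ is recovered. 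The Veronese refinement is what makes both lemmas necessary: when $3\mid d$ the power $\Omega^{d/3}$ lies in $A_d=A^{(d)}_1$, so $\pcnt(A^{(d)})$ acquires a degree-$1$ generator and must be treated by Lemma \ref{lem.kt}, while otherwise it stays in Lemma \ref{lem.notkt}.

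For part (b) I would run the same machine case by case over the non-abelian Lie algebras of dimension $\le 3$: the two-dimensional affine algebra, and in dimension three the Heisenberg algebra, $\fsl_2$, and the solvable families. For $\fsl_2$ the Poisson center is $\kk[C]$ with $C$ the quadratic Casimir, so Lemma \ref{lem.notkt} applies. The Heisenberg algebra is the delicate one: here $z=\{x,y\}$ is a genuine degree-$1$ Poisson central element, so $\pcnt(S(\fg))=\kk[z]$ is of ``$\kk[t]$'' type and must be handled by Lemma \ref{lem.kt}, whose content is precisely that a degree-$1$ central element arising from the bracket is still distinguishable from a freely adjoined variable.

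I expect the main obstacle to be twofold. First, the isomorphisms in the cancellation problem are not assumed graded, so although $R[t]$ is connected graded a Poisson isomorphism $A^{(d)}[s]\to B[t]$ need not respect degree; the way around this is to exploit that $\pcnt(-)$ is characteristic and to argue with the intrinsic filtration it inherits rather than with an ambient grading. Second, and more seriously, is exactly the ``$\kk[t]$'' case of Lemma \ref{lem.kt}: separating a degree-$1$ Casimir---the Heisenberg $z$, or the Veronese Casimir when $3\mid d$---from a true polynomial variable needs a finer invariant than the center alone, and isolating that invariant, presumably through the bracket induced on a complement of the center or through a Makar-Limanov-type computation, is where the real work lies.
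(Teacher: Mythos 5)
Your framing collapses the paper's machinery into the two center lemmas alone, and the reduction breaks at exactly the point you flag as ``where the real work lies.'' First, Lemma \ref{lem.notkt} is misapplied: its hypothesis $\pcnt \niso \kk[t]$ is an \emph{abstract} algebra isomorphism, not a graded one, so a Poisson center $\kk[\Omega]$ generated by a degree-$3$ Casimir is still isomorphic to $\kk[t]$ and falls squarely into the hard case rather than the easy one. (Moreover, not every nontrivial quadratic bracket on $\kk[x,y,z]$ carries a Casimir: a generic skew-symmetric bracket $\{x_i,x_j\}=\lambda_{ij}x_ix_j$ has trivial Poisson center, and the paper only establishes $\Kdim\pcnt\le 1$ via Lemma \ref{lem.kdim}.) Second, in the $\pcnt\iso\kk[t]$ case Lemma \ref{lem.kt} requires $\gldim A/(t)=\infty$, and this hypothesis \emph{fails} in precisely your two delicate examples: for the Heisenberg algebra, $A/(z)\iso\kk[e,f]$ has global dimension $2$, and for the homogenized quadratic algebras of Theorem \ref{thm.homog}, $A/(t)$ is a polynomial ring in two variables. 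So the route through Lemma \ref{lem.kt} that you propose for the Heisenberg case (and for the quadratic case when the center is generated by a Casimir) cannot be run as stated.

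What the paper actually uses to close these cases --- and what is absent from your proposal --- is the Poisson discriminant method of Lemma \ref{lem.tcnt}, together with the classification of Theorem \ref{thm.homog} and the tensor decomposition of Lemma \ref{lem.tensor}. When $\gldim A/(t)<\infty$, the paper identifies $A$ with one of the homogenizations and, case by case, exhibits a Poisson property $\cP$ (triviality of the bracket modulo $\fm_\alpha$, the isomorphism class of $A/\fm_\alpha A$, or infinite global dimension) whose $\cP$-discriminant is exactly $t$, which yields strong $\pcnt$-retractability and cancellation via effectiveness and $\plnd_t$-rigidity; for the Heisenberg algebra it uses instead that $z\in\{A,A\}$ with $\{A,A\}\neq A$, so the commutator-ideal property has discriminant $z$ (part (b) of Lemma \ref{lem.tcnt}); and in cases such as $f=x^2$ or $f=\lambda xy$ it splits off $\kk[t]$ as a tensor factor and applies Lemma \ref{lem.tensor}. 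For the Veronese part you also omit two ingredients the paper supplies: nontriviality of the bracket on $A^{(d)}$ (proved via the strongly graded, finite-module argument and \cite{CAP2013}), which is needed before Lemma \ref{lem.kdim} can even be invoked, and the graded-isolated-singularity analysis (Theorem \ref{thm.ver} and Lemma \ref{lem.ver}) that actually verifies the global-dimension hypotheses of Lemma \ref{lem.kt} when $d\ge 2$. Your closing admission that a ``finer invariant than the center alone'' is needed is accurate: that invariant is the discriminant, and supplying it is the substance of the proof, not a loose end.
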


In Section \ref{section4}, we introduce the \emph{skew Zariski cancellation problem} for Poisson algebras in terms of any Poisson-Ore extension $A[t;\alpha,\delta]_P$ (see subsection \ref{sec:POE} for definition). That is, it asks whether or not an isomorphism between the Poisson-Ore extensions of two base Poisson algebras implies an isomorphism between the two base Poisson algebras. We explore various invariants, including the Poisson Makar-Limanov invariant, the divisor Poisson subalgebra, and the Poisson stratiform length, which play important roles in the skew cancellation problem.  Relevant papers in the noncommutative algebraic setting are  \cite{BHHV,Bcancel,TZZ}.

\begin{theorem}
Let $\kk$ be a base field of characteristic zero, and $A$ be a noetherian Poisson domain.
\begin{enumerate}
    \item (Theorem \ref{thm.simple}) If $A$ is either Poisson simple of finite Krull dimension with units in  $\kk$ or affine of Krull dimension one, then $A$ is Poisson $\alpha$-cancellative.
\item (Theorem \ref{thm.delta})  If the Poisson Makar-Limanov invariant of $A$ equals $A$, then $A$ is Poisson $\delta$-cancellative. 
\item (Theorem \ref{thm.dd1})  If $A$ has finite Krull dimension and the $1$-divisor Poisson subalgebra  of $A$ equals $A$, then $A$ is Poisson skew cancellative.
\item (Theorem \ref{thm:stratiform})
If $A$ is Poisson stratiform and the $1$-divisor Poisson subalgebra of $A$ equals $A$, then $A$ is Poisson skew cancellative in the category of noetherian Poisson stratiform domains.
\end{enumerate}
\end{theorem}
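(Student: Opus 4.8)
The four statements share a single strategy: to each case I attach a Poisson invariant $I(-)$ that is preserved by Poisson isomorphisms, I compute $I$ on a Poisson-Ore extension $A[t;\alpha,\delta]_P$ in terms of the base $A$, and I use this computation to push an isomorphism of extensions down to the bases. Thus, starting from a Poisson isomorphism $\phi\colon A[t;\alpha,\delta]_P \to B[s;\beta,\epsilon]_P$, I would apply $I$ to both sides and read off $A\iso B$. The parts correspond to four choices of $I$: the Poisson center $\pcnt(-)$ together with Krull dimension for (1), the Poisson Makar-Limanov invariant $\pml(-)$ for (2), the $1$-divisor Poisson subalgebra $D_1(-)$ for (3), and $D_1(-)$ refined by the Poisson stratiform length for (4).

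For (1) one has $\delta=0$, so $\{t,a\}=\alpha(a)t$, the ideal $(t)$ is a Poisson ideal, and $A[t;\alpha]_P/(t)\iso A$. When $A$ is Poisson simple of finite Krull dimension with units in $\kk$, every nonzero Poisson central element generates a nonzero Poisson ideal and is therefore a unit, so $\pcnt(A)=\kk$; I would then show that $(t)$ is a distinguished maximal Poisson ideal with Poisson simple quotient $A$, and use $\Kdim A[t;\alpha]_P=\Kdim A+1$ so that simplicity and the dimension count force the quotients on the two sides of $\phi$ to be isomorphic, giving $A\iso B$. The affine Krull dimension one case is treated separately: the base is then the coordinate ring of a Poisson curve and the extra variable is detected by dimension together with the Poisson center. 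For (2) one has $\alpha=0$, so $\{t,a\}=\delta(a)$ and $\partial/\partial t$ is a nonzero locally nilpotent Poisson derivation of $A[t;\delta]_P$ with kernel $A$; hence $\pml(A[t;\delta]_P)\subseteq A$. The hypothesis $\pml(A)=A$, i.e. that $A$ admits no nonzero locally nilpotent Poisson derivation, is exactly what is needed for the reverse inclusion: every locally nilpotent Poisson derivation of $A[t;\delta]_P$ must annihilate $A$. Granting this, $\pml(A[t;\delta]_P)=A$ canonically, and $\phi$ restricts to a Poisson isomorphism $A=\pml(A[t;\delta]_P)\to\pml(B[s;\epsilon]_P)=B$.

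For (3) and (4) the governing invariant is the $1$-divisor Poisson subalgebra $D_1(-)$, and the hypothesis $D_1(A)=A$ says that this invariant is already saturated, making $A$ rigid from the point of view of the divisor filtration. The plan is to analyze how $D_1$ interacts with a Poisson-Ore extension and to use the rigidity $D_1(A)=A$, together with finite Krull dimension, to recover the base $A$ intrinsically from $A[t;\alpha,\delta]_P$; transporting this recovery along $\phi$ then yields $A\iso B$. For (4), working inside the category of noetherian Poisson stratiform domains, I add the Poisson stratiform length $\mathrm{sl}(-)$ as a second invariant, using the additivity $\mathrm{sl}(A[t;\alpha,\delta]_P)=\mathrm{sl}(A)+1$; combining this with the rigidity $D_1(A)=A$ forces the stratiform data on the two extensions to match and hence $A\iso B$ within the stratiform category.

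The main obstacle is common to all four parts and is sharpest in (2): one must prove that the distinguished structure of the extension is forced by its abstract Poisson isomorphism type rather than by the chosen Poisson-Ore presentation. Concretely, the hard step is ruling out \emph{exotic} objects that mix $t$ with $A$ --- locally nilpotent Poisson derivations not annihilating $A$ in (2), stray central or divisor elements in (1) and (3), and extra strata in (4). I expect to control all of these by a leading-term argument in the $t$-adic filtration: passing to the associated graded Poisson algebra, a putative exotic object produces a nonzero homogeneous object of the same type which, after descent, contradicts the rigidity hypothesis on $A$ (namely $\pcnt(A)=\kk$, $\pml(A)=A$, or $D_1(A)=A$). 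Here the noetherian and domain hypotheses are essential: they guarantee that leading terms multiply without cancellation and that the associated graded faithfully detects the rigidity of the base.
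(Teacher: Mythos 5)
Your overall scheme---attach a Poisson-isomorphism invariant, compute it on the Poisson-Ore extension, and push the isomorphism down---is indeed the paper's strategy, and your leading-term idea for (2) (every locally nilpotent Poisson derivation of $A[x;\delta]_P$ must kill $\pml(A)$, proved by looking at the top coefficient in $x$) is exactly how Theorem \ref{thm.delta}(1) is proved. But there is a genuine gap at the end of (2), which propagates to (3) and (4): the invariant only yields an \emph{inclusion}, not an isomorphism. From $\phi\colon A[x;\delta]_P\to B[y;\delta']_P$ one gets $\phi(\pml(A[x;\delta]_P))=\pml(B[y;\delta']_P)$, and the derivation $\partial/\partial y$ gives $\pml(B[y;\delta']_P)\subseteq B$; your asserted equality $\pml(B[y;\delta']_P)=B$ would require every locally nilpotent Poisson derivation of $B[y;\delta']_P$ to annihilate $B$, i.e.\ the hypothesis $\pml(B)=B$ for the \emph{unknown} algebra $B$---which is the conclusion you are after, so assuming it is circular. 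All you can conclude is $\phi(A)\subseteq B$, and a separate ``sandwich'' step is needed: the paper's Lemma \ref{lem.subalg} shows that a Poisson subalgebra $B'$ with $A\subseteq B'=\phi\inv(B)\subseteq A[x;\delta]_P$ and $\Kdim B'=\Kdim A<\infty$ must equal $A$ (via leading terms in the associated graded domain). The same closing step, with $\Tdeg$ and stratiform length in place of $\Kdim$ (Proposition \ref{prop.strat1}, which also needs the nontrivial fact, via $\Tdeg$-stability, that stratiform length is well defined and additive, Proposition \ref{prop.strat} and Corollary \ref{cor.strat}), is what finishes (3) and (4); your write-up of those parts states the plan ($\DD_C(1)=\DD_A(1)\subseteq A$, as in Lemma \ref{lem.dd}) but never supplies this step.

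Part (1) has a more serious defect: your proposed mechanism would fail as stated. The ideal $(t_1,\hdots,t_n)$ is \emph{not} a distinguished maximal Poisson ideal---when all $\alpha_i=0$ every $(t_1-c_1,\hdots,t_n-c_n)$ with $c_i\in\kk$ is a Poisson ideal with quotient $A$---and nothing forces $\phi$ to carry the ideal $I=(t_1,\hdots,t_n)$ to an ideal generated by the $s_i$; a dimension count cannot bridge this. The paper's key tool, absent from your sketch, is Poisson normality (Lemma \ref{lem.scalar}): since $\delta=0$, each $s_n$ is Poisson normal in $D=B[s_1;\beta_1]_P\cdots[s_n;\beta_n]_P$, so its image in the Poisson simple quotient $D/\phi(I)\iso A$ is zero or invertible, hence a scalar because $A^\times=\kk^\times$---this is where the units hypothesis is actually used, not merely to force $\pcnt(A)=\kk$; induction then gives a surjection $B\to A$, which is injective because $B$ is a domain of the same Krull dimension. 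Similarly, your treatment of the affine Krull-dimension-one case (``detected by dimension together with the Poisson center'') is not an argument: the proof forgets the bracket, applies the Abhyankar--Eakin--Heinzer cancellation theorem to $A[x_1,\hdots,x_n]\iso B[y_1,\hdots,y_n]$, and then invokes the fact that in characteristic zero an affine Poisson domain of Krull dimension one has trivial bracket (Alev--Farkas), so the algebra isomorphism is automatically a Poisson isomorphism.
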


Basic notions and properties related to Poisson algebras are reviewed in Section \ref{section2}. In particular, we introduce different cancellation properties for Poisson algebras in subsection \ref{def:Poissoncancellation} and many useful invariants for Poisson algebras including the Poisson discriminant and the Poisson Makar-Limanov invariant in subsection \ref{PD} that lead us to prove various families of Poisson algebras are cancellative in this paper.

\section{Background}\label{section2}

Throughout the paper, we work over a base field $\kk$. All algebras are $\kk$-algebras and all tensor products are over $\kk$.

\subsection{Poisson algebras}

A \emph{Poisson algebra} is a commutative $\kk$-algebra $A$
equipped with a bilinear map $\{-,-\}: A \times A \rightarrow A$, called a \emph{Poisson bracket}, which is both a Lie bracket and a biderivation. 

Suppose $(A,\{-,-\})$ is a Poisson algebra. A subalgebra $B$ of $A$ is a \emph{Poisson subalgebra}
of $A$ if $\{b,b'\} \subseteq B$ for all $b,b' \in B$. A \emph{homomorphism} $\phi:A \rightarrow C$ of Poisson algebras is an algebra homomorphism such that for all $a_1,a_2 \in A$, $\phi(\{a_1,a_2\}_A) = \{\phi(a_1),\phi(a_2)\}_C$.
The \emph{Poisson center} of $A$ is denoted by 
\[\pcnt(A) := \{ z \in A : \{z,a\}=0 \text{ for all } a \in A \}.\]
We say $A$ has \emph{trivial Poisson center} if $\pcnt(A)=\kk$.

A \emph{Poisson ideal} $I$ of a Poisson algebra $A$ is an ideal such that $\{I,A\} \subseteq I$. In this setting, there is a natural Poisson bracket on $A/I$ given by 
\[ \{a+I,b+I\}:=\{a,b\}+I\]
for all $a+I,b+I \in A/I$. Moreover, the natural quotient map $\pi:A \to A/I$ is a surjective Poisson algebra homomorphism. Any Poisson ideal $P$ is called \emph{Poisson prime} if for all Poisson ideals $I$ and $J$ with $IJ \subseteq P$, $I \subset P$ or $J \subseteq P$.

The Poisson bracket on $A$ extends naturally to the localization $AS^{-1}$ for any multiplicative set $S$ of $A$ by 
\[
\left\{a/s,b/t\right\}:=(1/st)\{a,b\}-(a/s^2t)\{s,b\}-(b/t^2s)\{a,t\}+(ab/s^2t^2)\{s,t\}
\]
for any $a,b\in A$ and $s,t\in S$. If $S=\{1,t,t^2,\ldots\}$ for some $t\in A$, we will simply write $A[t^{-1}]=AS^{-1}$.

Given Poisson algebras $A$ and $B$ with brackets $\{,\}_A$ and $\{,\}_B$, respectively, the tensor product $A \tensor B$ is naturally equipped with a Poisson bracket
\[ 
\{a_1 \tensor b_1, a_2 \tensor b_2\}
    = a_1a_2 \tensor \{b_1,b_2\}_B + \{a_1,a_2\}_A \tensor b_1b_2
\]
for all $a_1,a_2 \in A$ and $b_1,b_2 \in B$. In particular, this construction may be applied to give a natural Poisson structure on a polynomial extension $A[x_1,\hdots,x_n] = A \tensor \kk[x_1,\hdots,x_n]$ in which we always assume that $\kk[x_1,\hdots,x_n]$ has the zero Poisson bracket.

\subsection{Poisson Derivations and Poisson-Ore extensions}\label{sec:POE}
A \emph{derivation} of an algebra $A$ is a $\kk$-linear endomorphism $\alpha$ of $A$ such that $\alpha(ab)=\alpha(a)b+a\alpha(b)$ for all $a,b\in A$. The derivation $\alpha$ is \emph{locally nilpotent} if for every $a \in A$ we have $\alpha^n(a)=0$ for $n\gg 0$. We denote the collection of derivations (resp. locally nilpotent derivations) of $A$ by $\der(A)$ (resp. $\lnd(A)$). 

In positive characteristic, it is usually more convenient to replace  derivations with higher derivations. A higher derivation (or Hasse-Schmidt derivation) on $A$ is a sequence of $\kk$-linear endomorphisms $\partial :=\{\partial_i\}_{i=0}^\infty$ such that
\[
\partial_0={\rm id}_A,\quad \partial_n(ab)=\sum_{i=0}^n \partial_i(a)\partial_{n-i}(b)\quad \text{for all $a,b\in A$ and all $n \ge 1$}.
\]
The collection of higher derivations of $A$ is denoted by $\derh(A)$.

Now suppose $A$ is a Poisson algebra. A derivation $\alpha$ of $A$ is called a \emph{Poisson derivation} if additionally we have 
\[ \alpha(\{a,b\}) = \{\alpha(a),b\} + \{a,\alpha(b)\}\]
for all $a,b \in A$. We denote the collection of Poisson derivations (resp. locally nilpotent Poisson derivations) of $A$ by $\pder(A)$ (resp. $\plnd(A)$).

In positive characteristic, there is a notion of higher Poisson derivations introduced by Launois and Lecoutre [23], which is a higher derivation $\{\partial_i\}_{i=0}^\infty$ on $A$ satisfying
\[
\partial_n(\{a,b\})=\sum_{i=0}^n \{\partial_i(a),\partial_{n-i}(b)\}\quad \text{for all $a,b\in A$ and all $n \ge 0$}.
\]
We denote the collection of locally nilpotent higher Poisson derivations of A by $\plndh(A)$. 

Given a Poisson derivation $\alpha$ of $A$, a derivation $\delta$ of $A$ is a 
\emph{Poisson $\alpha$-derivation} if it satisfies
\[ \delta(\{a,b\})=\{\delta(a),b\}+\{a,\delta(b)\}+\alpha(a)\delta(b)-\delta(a)\alpha(b)\]
for all $a,b\in A$. Note that a Poisson $\alpha$-derivation is just a Poisson derivation when $\alpha=0$. 

Given a Poisson derivation $\alpha$ and a Poisson $\alpha$-derivation $\delta$ of $A$, the \emph{Poisson-Ore extension} $A[t;\alpha,\delta]_P$ is the polynomial ring $A[t]$ together with the following Poisson bracket
\[  \{a,b\} = \{a,b\}_A, \qquad \{a,t\} = \alpha(a)t +\delta(a),\]
for all $a,b\in A$. When $\delta = 0$ (resp. $\alpha=0$), we abbreviate $A[t;\alpha,\delta]_P$ to $A[t;\alpha]_P$ (resp. $A[t;\delta]_P$).
The Poisson structure of $A[t;\alpha,\delta]_P$ on $A[t]$ extends (uniquely) to a Poisson structure on $A(t)$, the algebra of rational functions with coefficients in $A$, and we denote this Poisson algebra by $A(t;\alpha,\delta)_P$.

\begin{lemma}
Let $A$ be a Poisson algebra and let $A[t_1,\hdots,t_n]$ be a Poisson algebra over $A$.
If $\{A,t_1\}\subseteq At_1+A$ and 
\[
\{A[t_1,\hdots,t_i],t_{i+1}\}\subseteq A[t_1,\hdots,t_i]\,t_{i+1}+A[t_1,\hdots,t_i]
\]
for all $i=1,\hdots,n-1$, then 
\[A[t_1,\hdots,t_n]\cong A[t_1;\alpha_1,\delta_1]_P\cdots [t_n;\alpha_n,\delta_n]_P\]
is an iterated Poisson-Ore extensions over $A$.
\end{lemma}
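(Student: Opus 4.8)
The plan is to argue by induction on $n$, carrying two assertions at each stage: that $B_i := A[t_1,\dots,t_i]$ (with $B_0 = A$) is a Poisson subalgebra of $A[t_1,\dots,t_n]$, and that it is an iterated Poisson-Ore extension $A[t_1;\alpha_1,\delta_1]_P\cdots[t_i;\alpha_i,\delta_i]_P$. The base case $B_0 = A$ holds by assumption, and the hypotheses are precisely the inclusions $\{B_i, t_{i+1}\}\subseteq B_i t_{i+1}+B_i$ for $0\le i\le n-1$. So everything reduces to a single inductive step: assuming $B_i$ is a Poisson algebra with $\{B_i,t_{i+1}\}\subseteq B_i t_{i+1}+B_i$, produce $\alpha_{i+1},\delta_{i+1}$ and show $B_{i+1} = B_i[t_{i+1};\alpha_{i+1},\delta_{i+1}]_P$.

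To ease notation write $B = B_i$, $t = t_{i+1}$, $C = B[t] = B_{i+1}$. First I would check that $C$ is closed under the bracket. Since the bracket is a biderivation and $\{t,t\}=0$, the bracket of two monomials $bt^j$, $b't^k$ (with $b,b'\in B$) expands by the Leibniz rule into a sum of terms, each a product of a monomial in $t$ with one of $\{b,b'\}\in B$ or $\{b,t\},\{b',t\}\in Bt+B$; every such term lies in $C$, so $\{C,C\}\subseteq C$ and $C$ is a Poisson subalgebra. Next, because $C=B[t]$ is a genuine polynomial ring over $B$, the hypothesis $\{b,t\}\in Bt+B$ determines elements $\alpha(b),\delta(b)\in B$ uniquely by $\{b,t\}=\alpha(b)t+\delta(b)$, defining maps $\alpha,\delta\colon B\to B$. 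Bilinearity of the bracket makes $\alpha,\delta$ $\kk$-linear, and comparing the (unique) coefficients of $t$ and of $1$ on the two sides of the biderivation identity $\{bb',t\}=b\{b',t\}+b'\{b,t\}$ shows that both $\alpha$ and $\delta$ are derivations of $B$.

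The heart of the matter, and where I expect the real work, is the Poisson-compatibility of $\alpha$ and $\delta$; this comes out of the Jacobi identity. I would expand
\[
\{\{b,b'\},t\} + \{\{b',t\},b\} + \{\{t,b\},b'\} = 0 \qquad (b,b'\in B),
\]
substituting $\{x,t\}=\alpha(x)t+\delta(x)$ and using antisymmetry together with the biderivation rule throughout. Both sides are polynomials in $t$ of degree $\le 1$ with coefficients in $B$, so I can match the coefficient of $t$ and the constant term separately. The coefficient of $t$ simplifies---the terms quadratic in $\alpha$ cancel by commutativity of $B$---to exactly the Poisson-derivation identity $\alpha(\{b,b'\}) = \{\alpha(b),b'\}+\{b,\alpha(b')\}$, so $\alpha\in\pder(B)$. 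The constant term collapses to precisely the defining identity of a Poisson $\alpha$-derivation for $\delta$. The main obstacle is entirely bookkeeping: keeping the several biderivation expansions and the antisymmetry signs consistent so that the cross terms recombine into these two identities with the signs dictated by the definition of $B[t;\alpha,\delta]_P$.

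Finally, no separate reconstruction step is needed: by construction the Poisson bracket of $B[t;\alpha,\delta]_P$ and the original bracket on $C$ are both biderivations that agree on the generating set $B\cup\{t\}$---they agree on $B\times B$ since $B$ is a Poisson subalgebra, and on $B\times\{t\}$ by the very definition of $\alpha,\delta$---hence they coincide on all of $C$. Thus the identity map realizes $C=B_{i+1}$ as the Poisson-Ore extension $B_i[t_{i+1};\alpha_{i+1},\delta_{i+1}]_P$, completing the induction and yielding the claimed iterated Poisson-Ore extension.
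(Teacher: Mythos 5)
Your proof is correct in substance, but it takes a genuinely different route from the paper's. The paper dispatches this lemma in two lines: the case $n=1$ is exactly \cite[Theorem 1]{OH1} (Oh's theorem on Poisson polynomial rings), and the general case follows by induction. Your inductive frame is the same, but in the inductive step you reprove Oh's theorem from scratch: extracting $\alpha,\delta$ by coefficient comparison in the free $B$-module $B[t]$, getting the derivation property of both maps from the Leibniz rule applied to $\{bb',t\}$, and the two compatibility identities from the $t$-coefficient and the constant term of the Jacobi identity on $(b,b',t)$, followed by the observation that two biderivations agreeing on algebra generators coincide. What your version buys is self-containedness, and it makes visible why the hypothesis---a mere containment $\{B,t\}\subseteq Bt+B$, with no maps given in advance---suffices: $\alpha$ and $\delta$ are forced. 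What the citation buys is brevity. One sign caveat, which is a wrinkle in the paper's conventions rather than a flaw in your method: carrying out your Jacobi computation with the paper's bracket convention $\{b,t\}=\alpha(b)t+\delta(b)$, the constant term yields
\[
\delta(\{b,b'\})=\{\delta(b),b'\}+\{b,\delta(b')\}+\alpha(b')\delta(b)-\alpha(b)\delta(b'),
\]
which is the paper's Poisson $\alpha$-derivation identity with $\alpha$ replaced by $-\alpha$; it matches on the nose only under Oh's original convention $\{t,b\}=\alpha(b)t+\delta(b)$, which is the one the paper's compatibility condition (and its skew-symmetric example) actually presuppose. So your claim that the constant term collapses ``to precisely the defining identity'' needs a one-line repair: either define $\alpha,\delta$ via $\{t_{i+1},b\}=\alpha(b)t_{i+1}+\delta(b)$, or replace $(\alpha,\delta)$ by $(-\alpha,-\delta)$ at the end; either way the pair is compatible in the paper's sense, the iterated Poisson-Ore structure is unchanged, and the induction closes exactly as you describe.
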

\begin{proof}
This follows in the case $n=1$ by \cite[Theorem 1]{OH1}. The stated result follows easily by induction.
\end{proof}

\subsection{Poisson cancellation}\label{def:Poissoncancellation}
We review definitions related to Poisson cancellation. Throughout, $A$ is a Poisson algebra with Poisson center $\pcnt=\pcnt(A)$. The word \emph{strongly} in the following definitions can be dropped by restricting to the case $n=1$.
\begin{enumerate}
    \item \label{univ} We say $A$ is \emph{universally Poisson cancellative} if $A \tensor R \iso B \tensor R$ implies $A \iso B$ for every Poisson algebra $B$ and every finitely generated commutative domain $R$ over $\kk$ with trivial Poisson bracket such that the natural map $\kk \rightarrow R \rightarrow R/I$ is an isomorphism for some Poisson ideal $I \subseteq R$.
    
    \item In the special case of \ref{univ} when $R=\kk[t_1,\hdots,t_n]$ for all $n \geq 1$, we say $A$ is \emph{strongly Poisson cancellative}.
    
    \item We say $A$ is \emph{strongly Poisson detectable} (resp. \emph{strongly $\pcnt$-detectable}) if, for any Poisson algebra $B$ and integer $n \geq 1$, any Poisson algebra isomorphism $\phi:A[t_1,\hdots,t_n] \rightarrow B[s_1,\hdots,s_n]$ implies that $B[s_1,\hdots,s_n] = B[\phi(t_1),\hdots,\phi(t_n)]$, or equivalently, $s_1,\hdots,s_n \in B[\phi(t_1),\hdots,\phi(t_n)]$ (resp. $\pcnt(A)[s_1,\hdots,s_n] = \pcnt(B)[\phi(t_1),\hdots,\phi(t_n)]$).
    
    \item We say that $A$ is \emph{strongly Poisson retractable} (resp. \emph{strongly $\pcnt$-retractable}) if, for any Poisson algebra $B$ and integer $n \geq 1$, any Poisson algebra isomorphism $\phi:A[t_1,\hdots,t_n] \rightarrow B[s_1,\hdots,s_n]$ implies that $\phi(A)=B$ (resp. $\phi(\pcnt(A)) = \pcnt(B)$).
    
    \item \label{skcancel} We say $A$ is \emph{strongly Poisson skew cancellative} if any Poisson isomorphism 
\[ A[t_1;\alpha_1,\delta_1]_P \cdots [t_n;\alpha_n,\delta_n]_P \iso A'[t_1';\alpha_1',\delta_1']_P \cdots [t_n';\alpha_n',\delta_n']_P\] 
    between two iterated Poisson-Ore extensions of Poisson algebras $A$ and $A'$ of the same length $n\ge 1$ implies a Poisson isomorphism $A \iso A'$. 
    
    \item In the special case of \ref{skcancel} in which $\delta_i=\delta_i'=0$ for all $i$, we say $A$ is \emph{strongly Poisson $\alpha$-cancellative}.
    
    \item In the special case of \ref{skcancel} in which $\alpha_i=\alpha_i'=0$ for all $i$, we say $A$ is \emph{strongly Poisson $\delta$-cancellative}.
\end{enumerate}

Note that in all cases above, our definitions can be viewed as Poisson versions of the corresponding definitions for cancellation problems for associative algebras. 

\subsection{Poisson discriminant and rigidity}
\label{PD}
For Poisson algebras, the notion of Poisson discriminant has been introduced in \cite{GXW} in order to tackle the Poisson cancellation problem via the Poisson Makar-Limanov invariant and the corresponding rigidity. 

Let $A$ be a Poisson algebra. By a Poisson property $\cP$ we mean a property that is invariant under Poisson isomorphism within a class of Poisson algebras. 
The {$\cP$-locus} of $A$ is 
\[
L_{\cP}(A):=\{\fm\in \Maxspec(\pcnt(A) ): A/\fm A \text{ has property $\cP$}\}
\]
and the \emph{$\cP$-discriminant set} of $A$ is
\[
D_{\cP}(A):=\Maxspec(\pcnt(A))\setminus L_{\cP}(A).
\]
Now we can define the \emph{$\cP$-discriminant ideal} of $A$ to be
\[
I_{\cP}(A):=\bigcap_{\fm\in D_{\cP}(A)} \fm\subset \pcnt(A).
\]

In the special case such that $I_{\cP}(A)$ is a principal ideal of $\pcnt(A)$, say generated by some element $d\in \pcnt(A)$, then we call $d$ the \emph{$\cP$-discriminant} of $A$, denoted by $d_{\cP}(A)$. When $\pcnt(A)$ is a domain, $d_{\cP}(A)$ is unique up to some unit element of $\pcnt(A)$. 

For a higher Poisson derivation $\partial=(\partial_i)_{i=0}^\infty$, the \emph{kernel} of $\partial$ is defined to be 
\[
\ker(\partial)=\bigcap_{i\ge 1} \ker(\partial_i).
\]
Let $0\neq d\in \pcnt(A)$, and let $*$ be either blank or $H$. The set of all \emph{(higher) locally nilpotent Poisson derivations of $A$ with respect to $d$} is denoted by
\[
{\plnd^*_d}(A):=\{\partial\in\plnd^*(A)\,:\,d\in \ker(\partial)\}.
\]
Now the \emph{Poisson Makar-Limanov$^*_d$ invariant} of $A$ is defined to be
\[{\pml^*_d}(A)=\bigcap\limits_{\partial\in {\plnd^*_d}(A)}\,\ker(\partial) .\]
We say $A$ is \emph{$\plnd^*_d$-rigid} if $A=\pml^*_d(A)$, or equivalently, $\plnd^*_d(A)=\{0\}$. Moreover, we say A is {\it strongly $\plnd^*_d$-rigid} if $A\subseteq \pml^*_d(A[t_1,\ldots,t_n])$, for all $n\ge 1$.

Note that $1\in \ker(\partial)$ for any $\partial\in \plnd^*(A)$. Hence when $d=1$ we will simply omit it and write $\plnd^*$, $\pml^*$, and (strongly) $\plnd^*$-rigid.

\subsection{Filtered and graded Poisson algebras}
\label{sec.filt}

A \emph{Poisson $\NN$-filtration} $F$ on a Poisson algebra $A$ is a collection of $\kk$-subspaces $\{F_i A\}_{i \geq 0}$ satisfying
\begin{enumerate}
    \item $F_i A \subseteq F_{i+1} A$,
    \item $\bigcup_{i\geq 0} F_i A = A$,
    \item $F_i A \cdot F_j A \subseteq F_{i+j} A$, and 
    \item $\{F_i A, F_j A\} \subseteq F_{i+j} A$, for all $i, j \geq 0$.
\end{enumerate}
We say the filtration $F$ is \emph{connected} if $F_0 A = \kk$. 

If $A$ is a Poisson algebra with a Poisson $\NN$-filtration $F$, then the \emph{associated graded Poisson algebra $\gr_F(A)$} is defined as
\[ \gr_F(A) := \bigoplus_{m \geq 0} F_m A/ F_{m-1} A\]
with $F_{-1}(A)=0$. We drop the subscript if the filtration is clear. The Poisson algebra is \emph{$\NN$-graded} if $\gr_F(A) = A$ for some Poisson $\NN$-filtration $F$ and \emph{connected graded} if $F$ is connected. In this setting, we typically write $A_k$ for $F_k A/F_{k-1}A$ so that $A=\bigoplus_{k \geq 0} A_k$. In particular, we call any element $a\in A_k$ a homogeneous element of degree $k$ and write $\deg a=k$.

If $A=\bigoplus_{m\ge 0} A_m$ is an $\NN$-graded Poisson algebra and $d \in \NN$, the $d$th Veronese subalgebra $T=\bigoplus_{m\ge 0} A_{dm}$ is a Poisson subalgebra of $A$ since $\{A_{kd},A_{\ell d}\} \subset A_{(k+\ell)d}$ for all $k,\ell \in \NN$.

A Poisson bracket on $A=\kk[x_1,\hdots,x_n]$ is \emph{quadratic} if $\{x_i,x_j\} \in A_2$ under the standard $\NN$-grading on $A$. Similarly, a Poisson bracket on $A=\kk[x_1,\hdots,x_n]$ is \emph{filtered quadratic} if $\{x_i,x_j\} \in A_{\leq 2}$ under the standard $\NN$-filtration on $A$.

\subsection{Gelfand-Kirillov dimension}

Given an algebra $A$, a \emph{subframe} of $A$ is a finite-dimensional $\kk$-subspace of $A$ containing $1$. We define the \emph{Gelfand-Kirillov dimension} and  \emph{Gelfand-Kirillov transcendence degree} of $A$, respectively, to be
\begin{align*}
\GKdim A &= \sup_V \limsup_{n \to \infty} \log_n \dim(V^n) \\
\Tdeg A &= \sup_V \inf_b \lim_{n \to \infty} \log_n \dim( (\kk + bV)^n )
\end{align*}
where $V$ ranges over all subframes of $A$ and $b$ ranges over the regular elements of A. The dimension of a $\kk$-vector space is always denoted by dim. Note if $A$ is a commutative field then $\Tdeg A =\GKdim A=\Trdeg A$, where $\Trdeg A$ denotes the classical transcendence degree of $A$.

For a commutative affine algebra $A$, the Krull dimension of $A$ ($\Kdim A$) is equal to $\GKdim A$ \cite[Theorem 4.5]{KL}. Hence, for an affine Poisson algebra we frequently use the two concepts interchangeably.

\subsection{Examples} 
\label{sec.exs}

We will consider several families of Poisson algebras which we define below.

\begin{enumerate}
    \item The \emph{$n$-th Weyl Poisson algebra} is the algebra $\cP_n = \kk[x_1,y_1,\hdots,x_n,y_n]$ with Poisson bracket given by
\[ 
\{ x_i, y_j \} = \delta_{ij}, \quad
\{ x_i, x_j \} = \{ y_i,y_j \} = 0,
\]
for all $0 \leq i,j \leq n$. 
In particular, the \emph{first Weyl Poisson algebra} is $\cP_1 = \kk[x,y]$ with bracket $\{x,y\}=1$. We denote the quotient division ring of $\cP_1$ by $\FF_1(\kk)$. 

We can express $\cP_n$ as the iterated Poisson-Ore extensions $\kk[x_1,\hdots,x_n][y_1;\delta_1]_P \cdots [y_n;\delta_n]_P$ in which $\delta_i(x_j)=\delta_{ij}$ for all $i,j$ and $\delta_i(y_j)=0$ for $j<i$. Further, $\cP_n$ is filtered under the standard filtration defined by setting $F_iA$ to be the $\kk$-span of monomials of degree at most $i$. In this case, $\gr_F(\cP_n) = \kk[x_1,y_1,\hdots,x_n,y_n]$ with the trivial Poisson bracket.

\item Let $\Lambda=(\lambda_{ij})\in M_n(\kk)$ be a skew-symmetric $n\times n$ matrix. The \emph{skew-symmetric Poisson algebra} is the polynomial algebra $\kk[x_1,\hdots,x_n]$ with Poisson bracket $\{x_i,x_j\} = \lambda_{ij} x_ix_j$ for all $1\leq i,j\leq n$. The quotient division ring is denoted by $\QQ_n^\Lambda(\kk)$. For simplicity, in the case of $n=2$ we let $\lambda=\lambda_{12}$ and write $\QQ_2^\lambda(\kk)$.
A skew-symmetric Poisson algebra is quadratic and can be presented as the iterated Poisson-Ore extensions $\kk[x_1][x_2;\alpha_2]_P \cdots [x_n;\alpha_n]_P$ in which $\alpha_i(x_j) = \lambda_{ij} x_j$ for $i<j$.

\item Let $\fg$ be an $n$-dimensional Lie algebra over $\kk$ with basis $\{x_1,\hdots,x_n\}$. The \emph{Kostant-Kirillov Poisson bracket} on $S(\fg)=\kk[x_1,\hdots,x_n]$ is determined by $\{x_i,x_j\}=[x_i,x_j]$. We denote this Poisson algebra by $PS(\fg)$.
\end{enumerate}

\section{Cancellation of quadratic Poisson algebras in three variables}
\label{section3}
The invariant theory of quadratic Poisson algebras is similar in many ways to that of quantum polynomial rings (see \cite{GVX,GXW}). This section is inspired by work in \cite{TRZ} in which the authors study cancellation problems for AS regular algebras of global dimension 3. Here we prove an analogous result for Poisson algebras. In particular, we prove that quadratic Poisson algebras in three variables with nontrivial Poisson bracket are Poisson cancellative.

\subsection{Filtered quadratic Poisson algebras in two variables}

In \cite{dumas}, Dumas classified filtered quadratic Poisson algebras on $\kk[x,y]$ up to rational equivalence. Here we classify them up to Poisson isomorphism. This will be useful later in considering cancellation problems for quadratic Poisson algebras with three variables.

For a Poisson algebra $A$, the \emph{commutator ideal} $\{A,A\}$ is the Poisson ideal generated by elements of the form $\{a,b\}$ with $a,b \in A$.

\begin{theorem}
\label{thm.filtered}
Suppose $\kk$ is algebraically closed and $\ch(\kk) \neq 2$.
Let $A=\kk[x,y]$ be a Poisson algebra such that $\{x,y\}=f$ with $f\in A_{\leq 2}$. Then up to a change of variables, the possibilities for $f$ are
\begin{enumerate}
    \item[(1)] $f=0$,
    \item[(2)] $f=1$,
    \item[(3)] $f=x$,
    \item[(4a)] $f=x^2$, (4b) $f=x^2+1$,
    \item[(5a)] $f=\lambda xy$ with $\lambda \in \kk^\times$,
    (5b) $f=\lambda xy+1$ with $\lambda \in \kk^\times$.
\end{enumerate}
Moreover, the Poisson algebras determined by $f$ above are pairwisely nonisomorphic with the exception of replacing $\lambda$ by $-\lambda$ in (5a) and (5b).
\end{theorem}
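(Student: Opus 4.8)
The plan is to reduce the entire statement to a single transformation law for the structure constant $f=\{x,y\}$. On $A=\kk[x,y]$ the bracket is completely determined by $f$ through the Jacobian formula $\{p,q\}=\mathrm{Jac}(p,q)\,f$, where $\mathrm{Jac}(p,q)=p_xq_y-p_yq_x$; since the Jacobi identity is automatic in two variables, every $f$ determines a valid Poisson algebra. If $\phi$ is a Poisson isomorphism from $(A,\{x,y\}=f)$ to $(A,\{x,y\}=g)$, then evaluating the defining identity on the pair $(x,y)$ and using that the Jacobian $J_\phi:=\mathrm{Jac}(\phi(x),\phi(y))$ of a polynomial automorphism is a nonzero constant yields
\[ \phi(f)=J_\phi\,g,\qquad J_\phi\in\kk^\times. \]
Hence two filtered quadratic brackets are isomorphic precisely when $g=J_\phi^{-1}\phi(f)$ for some $\phi\in\mathrm{Aut}(\kk[x,y])$, and this relation is the only thing I need to track throughout.

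For the normal-form (exhaustiveness) half I would run an explicit reduction on $f\in A_{\le 2}$ organized by $\deg f$, using only affine automorphisms, elementary triangular automorphisms $y\mapsto \gamma y+p(x)$, and diagonal scalings (together with the scalar $J_\phi^{-1}$). The cases $f=0$ and $f\in\kk^\times$ give (1) and (2) immediately, and a linear $f$ reduces to (3) after normalizing the linear part and translating away the constant. For $\deg f=2$ I would first normalize the quadratic part $f_2$ using that over an algebraically closed field with $\ch(\kk)\neq 2$ a binary quadratic form has rank $1$ (type $x^2$) or rank $2$ (type $xy$). In the rank-$1$ case, if a transverse linear $y$-term is present a triangular automorphism collapses $f$ back to the linear case (3), while its absence lets me complete the square (here $\ch(\kk)\neq2$ is essential) and scale to reach $f=x^2$ (4a) or $f=x^2+1$ (4b). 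In the rank-$2$ case a translation kills both linear terms and a diagonal scaling normalizes the surviving constant to $0$ or $1$, producing $f=\lambda xy$ (5a) or $f=\lambda xy+1$ (5b), with the coefficient $\lambda\in\kk^\times$ surviving the process.

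For the pairwise non-isomorphism half I would exploit that $\phi(f)=J_\phi g$ forces $\phi$ to carry the principal ideal $(f)$ to $(g)$, so the factorization type of $f$ and the isomorphism class of the coordinate ring $\kk[x,y]/(f)$ are Poisson invariants. These separate all the classes: the cases $f=0$ and $f\in\kk^\times$ are detected by having no irreducible factor; (3), (4a) are a single line of multiplicity $1$ versus $2$; (4b) and (5a) both give two distinct linear factors but are distinguished because $\kk[x,y]/(x^2+1)$ splits as a product (nontrivial idempotents) while $\kk[x,y]/(xy)$ does not; and (3) versus (5b) is settled by comparing unit groups, since $\kk[x,y]/(x)$ is a polynomial ring whereas $\kk[x,y]/(xy+1)$ is a Laurent ring. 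For the parameter $\lambda$ in (5a) I would use that $\phi(\lambda xy)=J_\phi\,\lambda' xy$ gives $\phi(x)\phi(y)=\mu\,xy$ in the UFD $\kk[x,y]$; unique factorization forces $\phi$ to be diagonal or the coordinate swap, and feeding each back into the transformation law produces exactly $\lambda'=\lambda$ or $\lambda'=-\lambda$, which is the asserted exception.

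The main obstacle, and where the bookkeeping is heaviest, is the $\deg f=2$ reduction: correctly tracking how the linear and constant terms interact under the triangular automorphisms and diagonal scalings while respecting the scalar $J_\phi^{-1}$, and confirming that the rank-$1$-with-transverse-term subcase genuinely degenerates to (3) rather than producing a new class. The non-isomorphism invariants are then comparatively routine, the only subtle point being the clean UFD argument that pins down $\lambda$ up to sign.
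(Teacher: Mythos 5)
Your reduction to normal forms is essentially the paper's: split $f$ by degree, normalize the quadratic part to $x^2$ or $\lambda xy$ (the paper invokes \cite[Lemma 2.5]{GVX} for this rank classification), observe that rank one with a transverse $y$-term degenerates to case (3) (the paper realizes this via $f=x^2+y$ followed by $x\mapsto -y'$, $y\mapsto x'-(y')^2$), and normalize constants by scalings using $\ch(\kk)\neq 2$ and algebraic closedness. Where you genuinely diverge is the non-isomorphism half. The paper distinguishes the classes by quotient division rings --- (2), (3), (4a), (4b) have quotient division ring $\FF_1(\kk)$ while (5a), (5b) have $\QQ_2^\lambda(\kk)$, non-isomorphic by \cite[Corollary 5.3]{GLdm2} --- together with Poisson simplicity of (2), counts of height-one Poisson primes, the commutator ideal (to split (3) from (4a)), and an appeal to the argument of \cite[p.16]{dumas} for the parameter $\lambda$. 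Your single transformation law $\phi(f)=J_\phi\, g$ with $J_\phi\in\kk^\times$ replaces all of this machinery: it shows $(f)$ maps to $(g)$, so the isomorphism class of $\kk[x,y]/(f)$ (the whole ring, the zero ring, $\kk[y]$, a non-reduced ring, a split product via comaximality of $(x\pm i)$, the connected non-domain $\kk[x,y]/(xy)$, a Laurent ring) separates all seven classes uniformly and elementarily, with no input from the division-ring literature; and your UFD argument (coordinates are irreducible, so $\phi(x)\phi(y)=\mu xy$ forces $\phi$ diagonal or antidiagonal) replaces the citation to Dumas. Note that your invariant is secretly the paper's: since every bracket equals $\mathrm{Jac}(p,q)f$, the commutator ideal $\{A,A\}$ used in the paper is exactly $(f)$, so your law makes one of the paper's ad hoc invariants systematic. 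This is a legitimate trade: the paper's division-ring invariant also feeds its later results, while your argument is self-contained.

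One concrete omission: the theorem asserts the $\pm\lambda$ exception for both (5a) and (5b), but you run the UFD analysis only for (5a). Your method does close the remaining case, and you should say so explicitly: from $\phi(\lambda xy+1)=J_\phi(\lambda' xy+1)$ one gets $\lambda\,\phi(x)\phi(y)=J_\phi\lambda'\, xy+(J_\phi-1)$; if $J_\phi\neq 1$ the right-hand side is a unit times the irreducible $xy+\nu$ with $\nu\neq 0$, while the left-hand side is a product of two nonconstant polynomials, a contradiction, so $J_\phi=1$, and then the same unique-factorization step as in (5a) yields $\phi$ diagonal with $ab=1$ (so $\lambda'=\lambda$) or antidiagonal with $ab=-1$ (so $\lambda'=-\lambda$). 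Without this paragraph the ``moreover'' clause is only proved for (5a).
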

\begin{proof}
If $f=0$, then we are in case (1). We now assume that $f \neq 0$. Write $f=f_2+f_1+f_0$ where $\deg(f_i)=i$. 

Suppose $f_2=0$. Then $f=ax+by+c$ for $a,b,c \in \kk$. If $a=b=0$, then $c \neq 0$ and we make the substitution $x \mapsto cx'$. This gives case (2).  
After possibly switching $x$ and $y$, we may now assume $a \neq 0$. Making the change of variables $x \mapsto x'-by'-c/a$ and $y\mapsto ay'$ gives (3). 

Now assume $f_2 \neq 0$.
Applying \cite[Lemma 2.5]{GVX}, we may now assume $f_2=x^2$ or $f_2=\lambda xy$ for some $\lambda \in \kk^\times$.

Suppose $f = x^2+ax+by+c$. If $a=b=c=0$, then we are in case (4a). If $a=b=0$ and $c\neq 0$, then we make the change of variable $x \mapsto \sqrt{c} x'$, $y \mapsto \sqrt{c} y'$ and we are in case (4b). If $b = 0$, then making the change of variable $x \mapsto x'-a/2$ and $y \mapsto y'$ leads to case (4a) or (4b). Assume $b \neq 0$ and let $w$ be a root of $w^2+aw+c=1$. Making the change of variable $x \mapsto bx'+w$ and $y \mapsto -(a+2w)x'+by'$ gives $f=x^2+y$. Composing this with the change of variable $x \mapsto -y'$ and $y \mapsto x'-(y')^2$ gives (3).

Suppose $f=\lambda xy+ax+by+c$ for some $\lambda \neq 0$. Making the change of variable $x \mapsto x'-b/\lambda $ and $y\mapsto y'-a/\lambda$ gives $\{x',y'\}=\lambda x'y'+c'$ for some $c'$. If $c' = 0$ we are in case (5a). If $c' \neq 0$, then make the change of variable $x' \mapsto c' x''$, $y' \mapsto y''$ and we are in case (5b).

We now determine the isomorphism classes of these two-generated Poisson algebras by using quotient division rings.

In case (1), $A$ is commutative. By \cite{dumas}, the quotient division rings for the Poisson algebras in (2), (3), (4a), and (4b) are isomorphic to $\FF_1(\kk)$, while those in (5a) and (5b) are isomorphic to $\QQ_2^\lambda(\kk)$. Further, $\FF_1(\kk)$ is not isomorphic to $\QQ_2^\lambda(\kk)$ \cite[Corollary 5.3]{GLdm2}.

Next we show that those in (2), (3), (4a), and (4b) are pairwisely non-isomorphic. In case (2), $A$ is the first Weyl Poisson algebra and hence is a simple Poisson algebra. In case (4b), there are exactly two height one Poisson primes: $(x\pm i)$. In cases (3) and (4a), there is exactly one height one Poisson prime: $(x)$. The commutator ideal $\{A,A\}$ in case (3) is the ideal $(x)$ and in case (4a) it is $(x^2)$. An isomorphism must preserve both the set of height one Poisson prime ideals and the commutator ideal, hence the Poisson algebras in case (3) and (4a) are non-isomorphic.

In case (5a), there are two height one Poisson prime ideal: $(x)$ and $(y)$. In case (5b) there is one height one Poisson prime ideal: $(\lambda xy+1)$. So (5a) and (5b) are pairwisely nonisomorphic. In case (5a), by making the change of variable $x \mapsto y'$ and $y \mapsto x'$, we see that we may replace $\lambda$ by $-\lambda$ up to isomorphism. In case (5b), by making the change of variable $x \mapsto -y'$ and $y \mapsto x'$, we see that we may replace $\lambda$ by $-\lambda$ up to isomorphism. Applying the argument in \cite[p.16]{dumas} completes the proof.
\end{proof}

When $\mathrm{char}(\kk)=0$, the Poisson algebras in Theorem \ref{thm.filtered} (2)-(5) are universally Poisson cancellative by \cite[Corollary 5.5]{GXW}. 

In the next result, we consider \emph{homogenizations} of the Poisson algebras from Theorem \ref{thm.filtered}. This introduces one additional isomorphism class.

\begin{theorem}
\label{thm.homog}
Suppose $\kk$ is algebraically closed and $\ch(\kk) \neq 2$. Let $A=\kk[x,y,t]$ be an $\NN$-graded Poisson algebra with $t$ Poisson central and $\{x,y\} = f\in A_2$. Then up to a change of variables, the possibilities for $f$ are
\begin{enumerate}
    \item[(1)] $f=0$,
    \item[(2)] $f=t^2$,
    \item[(3)] $f=xt$,
    \item[(4a)] $f=x^2$, (4b) $f=x^2+t^2$, (4c) $f=x^2+yt$,
    \item[(5a)] $f=\lambda xy$ with $\lambda \in \kk^\times$,
    (5b) $f=\lambda xy+t^2$ with $\lambda \in \kk^\times$.
\end{enumerate}
Moreover, the Poisson algebras determined by $f$ above are pairwisely nonisomorphic with the exception of replacing $\lambda$ by $-\lambda$ in (5a) and (5b).
\end{theorem}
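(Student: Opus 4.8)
The plan is to homogenize Theorem~\ref{thm.filtered}. Given the graded Poisson algebra $A = \kk[x,y,t]$ with $t$ Poisson central and $\{x,y\}=f \in A_2$, I would first pass to the dehomogenization at $t$: setting $t=1$ yields a filtered quadratic Poisson bracket on $\kk[x,y]$ with $\{x,y\} = \bar f \in A_{\leq 2}$, where $\bar f = f(x,y,1)$. Theorem~\ref{thm.filtered} then gives a change of variables on $\kk[x,y]$ carrying $\bar f$ into one of the listed normal forms. The key observation is that any affine change of variables $x \mapsto a_{11}x' + a_{12}y' + b_1$, $y \mapsto a_{21}x'+a_{22}y'+b_2$ used in that proof \emph{homogenizes}: replacing the constants $b_i$ by $b_i t$ gives a graded (degree-preserving) linear substitution on $\kk[x,y,t]$ that fixes $t$ and respects Poisson centrality of $t$. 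Thus each case of Theorem~\ref{thm.filtered} lifts to a homogeneous normal form, with every constant $c$ in $\bar f$ becoming $ct^2$ and every linear term $cx$ becoming $cxt$. This produces precisely the list (1)--(5b), matching the degrees so that $f\in A_2$.

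\textbf{The extra case.} The one subtlety is that dehomogenization is not injective on isomorphism classes: distinct homogeneous $f$ can dehomogenize to equivalent $\bar f$, because a substitution used downstairs may involve $\sqrt{c}$ or other operations that do not lift uniquely. Concretely, in Theorem~\ref{thm.filtered} both case (4a) ($f=x^2$) and the reduction of $x^2+y$ collapse to case (3) via a \emph{nonlinear} substitution $x\mapsto -y'$, $y \mapsto x' - (y')^2$, which is not degree-preserving and hence does \emph{not} homogenize. First I would isolate this step: the homogeneous bracket $f = x^2 + yt$ (case (4c)) dehomogenizes to $\bar f = x^2+y$, which downstairs is equivalent to case (3), but upstairs it is a genuinely new graded isomorphism class precisely because the linearizing substitution fails to preserve the grading. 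This is the promised additional isomorphism class, and I expect verifying its distinctness to be the main obstacle.

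\textbf{Normal forms.} Having lifted the changes of variables, I would run through the cases of Theorem~\ref{thm.filtered} systematically. Writing $f = f_2 + f_1 t + f_0 t^2$ with $f_i$ homogeneous of degree $i$ in $x,y$, the quadratic leading term $f_2$ is, by \cite[Lemma 2.5]{GVX}, $0$, $x^2$, or $\lambda xy$ after a graded linear change of variables. For $f_2 = 0$ the bracket is $f = f_1 t + f_0 t^2$, and the affine reductions of Theorem~\ref{thm.filtered} (homogenized) give cases (1), (2), (3). For $f_2 = x^2$ the homogenized reductions give (4a), (4b), (4c), where (4c) arises exactly from the non-homogenizable step above. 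For $f_2 = \lambda xy$ one obtains (5a), (5b). The degree bookkeeping is routine once the homogenization dictionary ($c \leadsto ct^2$, $cx \leadsto cxt$) is fixed.

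\textbf{Nonisomorphism.} Finally I would establish that the listed algebras are pairwise nonisomorphic (up to $\lambda \mapsto -\lambda$). Any graded Poisson isomorphism dehomogenizes to a Poisson isomorphism of the associated $\kk[x,y]$ algebras, so the invariants already used in Theorem~\ref{thm.filtered}---the quotient division ring ($\FF_1(\kk)$ versus $\QQ_2^\lambda(\kk)$, distinguishing (2)--(4) from (5)), the height-one Poisson primes, and the commutator ideal $\{A,A\}$---immediately separate most classes. The delicate point is distinguishing (4c) from both (3) and (4a): here I would use a graded invariant that survives homogenization, such as the commutator ideal $\{A,A\}$ (which is $(x^2, xt)$ for (4c) versus $(x^2)$ for (4a) and $(xt)$ for (3) in degree considerations) together with the geometry of the Poisson prime containing the vanishing locus of $f$. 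Because the distinguishing substitution between (3) and (4c) is nonlinear and therefore not available among graded isomorphisms, these two classes remain genuinely distinct. I expect this separation of (4c) to require the most care, since one must confirm no \emph{homogeneous} change of variables collapses it into an earlier case.
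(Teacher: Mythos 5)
Your classification half is essentially the paper's own argument: the paper simply says the proof is ``analogous to that of Theorem~\ref{thm.filtered},'' i.e.\ the affine substitutions homogenize exactly as in your dictionary, and your diagnosis of case (4c) --- that the nonlinear substitution $x\mapsto -y'$, $y\mapsto x'-(y')^2$ collapsing $x^2+y$ into case (3) is not degree-preserving and hence does not lift --- is precisely the right explanation for the extra class. The genuine gap is in the nonisomorphism half. You argue only about \emph{graded} Poisson isomorphisms (``Any graded Poisson isomorphism dehomogenizes\dots''), but the theorem asserts the algebras are nonisomorphic as Poisson algebras, and non-graded Poisson isomorphisms genuinely exist in this family: for instance in case (4a), $x\mapsto x$, $y\mapsto y+g(x,t)$, $t\mapsto t$ is a Poisson automorphism for any polynomial $g$. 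So before you may dehomogenize you must justify replacing an arbitrary isomorphism by a graded one; the paper does this by invoking \cite[Theorem 4.2]{GXW}, after which preservation of the Poisson center $\kk[t]$ forces $\phi(t)=\alpha T$ and yields $A/(t-1)\iso B/(T-\alpha\inv)$. Without some such reduction, your argument establishes only graded-nonisomorphism.

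Second, your proposed invariant for the delicate case is miscomputed: for (4c) the commutator ideal is not $(x^2,xt)$ but the principal ideal $(x^2+yt)$, since $\{x,x^2+yt\}=t(x^2+yt)$ and $\{y,x^2+yt\}=-2x(x^2+yt)$ show $(f)$ is already a Poisson ideal; indeed $\{A,A\}=(f)$ in every case here. (Correctly computed, the invariant would in fact serve you: $\kk[x,y,t]/(x^2+yt)$ is a domain, while the quotients by $(x^2)$, $(x^2+t^2)$, $(xt)$ are respectively non-reduced or reduced-but-not-domains, so (4c) separates from (4a), (4b), and (3).) You also did not notice that your own dehomogenization mechanism already settles the comparisons you flagged as hardest, and this is exactly the paper's route: a graded isomorphism from (4c) to (4a) (or (4b)) would dehomogenize to an isomorphism between filtered case (3) --- since $x^2+y$ is equivalent to $f=x$ downstairs --- and filtered case (4a) (resp.\ (4b)), contradicting Theorem~\ref{thm.filtered}. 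The non-injectivity of dehomogenization on isomorphism classes, which you correctly observed, obstructs only the single comparison of (4c) with (3), and that one is handled by the (corrected) commutator-ideal computation above, in line with how (3) and (4a) were separated in the filtered theorem.
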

\begin{proof}
The proof is analogous to that of Theorem \ref{thm.filtered}. The exception is case (4c). It suffices to prove that (4c) is distinct from cases (4a) and (4b). We note in all of these cases except (1) that the Poisson center is $\kk[t]$. 

Let $A=\kk[x,y,t]$ with $t$ Poisson central and $\{x,y\}=x^2+yt$. Let $B=\kk[X,Y,T]$ with $T$ Poisson central and $\{X,Y\}=X^2$. Suppose $\phi:A \to B$ is a Poisson isomorphism. By \cite[Theorem 4.2]{GXW}, we may assume that $\phi$ is a graded Poisson isomorphism. Further, the isomorphism preserves the Poisson center, so $\phi(t)=\alpha T$ for some $\alpha \in \kk^\times$. Hence, $A/(t-1) \iso B/(T-\alpha\inv)$. But this is implies that cases (3) and (4a) of Theorem \ref{thm.filtered} are isomorphic, a contradiction. 
A similar argument shows that cases (4b) and (4c) are distinct.
\end{proof}

The classification in Theorem \ref{thm.filtered} aligns with that of filtered Artin-Schelter regular algebras, while the classification in Theorem \ref{thm.homog} aligns with that of their homogenizations \cite{Gtwogen}. The latter will be important in one of our main results of the section, Theorem \ref{thm.3var}, which shows that any quadratic Poisson algebra in three variables with nontrivial Poisson bracket is cancellative. Before we prove that, however, we first consider some intermediate results. The first is a Poisson analogue of \cite[Lemma 3.2]{TRZ}. Note that we can easily replace ``cancellative'' in the next result with ``$\pcnt$-retractable''.

\begin{lemma}
\label{lem.tensor}
Let $A$ be a Poisson algebra with trivial Poisson center and let $R$ be a Poisson algebra with trivial bracket that is (resp. strongly) cancellative. Then $A \tensor R$ is (resp. strongly) Poisson cancellative.
\end{lemma}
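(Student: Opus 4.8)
```latex
The plan is to prove the statement in the ``strong'' case, since the non-strong case follows by specializing to $n=1$. So I want to show: if $R$ has trivial Poisson bracket and is strongly cancellative (as a commutative algebra), and $A$ has trivial Poisson center $\pcnt(A)=\kk$, then $A\tensor R$ is strongly Poisson cancellative. Thus I begin with a Poisson isomorphism $\phi\colon (A\tensor R)[t_1,\dots,t_n]\iso B[s_1,\dots,s_n]$ for some Poisson algebra $B$, and I must produce a Poisson isomorphism $A\tensor R\iso B$. The guiding idea is that the tensorand $R$ can be ``detected'' inside the Poisson center, and then stripped off to reduce a Poisson cancellation question to the purely commutative cancellation of $R$.

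The first key step is to compute the Poisson center of $(A\tensor R)[t_1,\dots,t_n]$. Since $R$ and the adjoined variables $t_i$ all carry the trivial bracket, and since the Poisson center of a tensor product of a trivially-bracketed algebra with $A$ should concentrate the centrality in the $R$-factor, I expect
\[
\pcnt\bigl((A\tensor R)[t_1,\dots,t_n]\bigr)=\pcnt(A)\tensor R\,[t_1,\dots,t_n]=R[t_1,\dots,t_n],
\]
using the hypothesis $\pcnt(A)=\kk$. I would verify this directly from the tensor-product bracket formula and the biderivation property: an element is Poisson central iff it brackets trivially with every generator of $A$, which forces it to lie in $\kk\tensor R[t_1,\dots,t_n]$. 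In parallel, applying $\phi$, the Poisson center is a Poisson isomorphism invariant, so $\pcnt(B[s_1,\dots,s_n])=\pcnt(B)[s_1,\dots,s_n]$ (the $s_i$ are central by construction of the polynomial Poisson structure), and $\phi$ restricts to an algebra isomorphism $R[t_1,\dots,t_n]\iso \pcnt(B)[s_1,\dots,s_n]$.

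The second step exploits that $R[t_1,\dots,t_n]=R\tensor\kk[t_1,\dots,t_n]$ and $\pcnt(B)[s_1,\dots,s_n]=\pcnt(B)\tensor\kk[s_1,\dots,s_n]$ are ordinary commutative polynomial extensions, so the isomorphism between them is an honest commutative-algebra isomorphism of the form $R[t_1,\dots,t_n]\iso \pcnt(B)[s_1,\dots,s_n]$. Invoking that $R$ is strongly cancellative as a commutative algebra, I conclude $R\iso \pcnt(B)$. The more delicate point is that I want not merely an abstract isomorphism but enough structural control to descend to $A\tensor R\iso B$; here I would use the retractability built into strong cancellation (the parenthetical remark before the lemma notes one may replace ``cancellative'' by ``$\pcnt$-retractable''), arranging that $\phi$ carries $\pcnt(A\tensor R)=R$ onto $\pcnt(B)$ and restricts to an isomorphism $A\tensor R\iso B$ on the nose.

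The main obstacle I anticipate is the descent step: knowing $R\iso\pcnt(B)$ and that $\phi$ identifies the centers does not immediately yield $A\tensor R\iso B$, because $B$ need not a priori split as a tensor product of its Poisson center with a trivial-center factor. To handle this I would argue that, after the center identification, $\phi$ descends modulo the augmentation ideals: reducing $B[s_1,\dots,s_n]$ along a maximal ideal of $\pcnt(B)[s_1,\dots,s_n]$ corresponding under $\phi$ to the ideal killing $R$ and the $t_i$, I recover on the $A\tensor R$ side a quotient isomorphic to $A$, and symmetrically recover $B$ modulo its center as the analogous quotient. Matching these central fibers and using that $A$ has trivial Poisson center (so $A\tensor R$ reconstructs from its center $R$ together with the fiber $A$) gives the desired Poisson isomorphism $A\tensor R\iso B$. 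The ``$\pcnt$-retractable'' variant mentioned in the statement would be proved along the same lines, simply tracking $\phi(\pcnt(A\tensor R))=\pcnt(B)$ instead of the full reconstruction.
```
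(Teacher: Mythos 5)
Your first two steps match the paper exactly: computing $\pcnt\bigl((A\tensor R)[t_1,\dots,t_n]\bigr)=R[t_1,\dots,t_n]$ from $\pcnt(A)=\kk$, restricting $\phi$ to an algebra isomorphism $R[t_1,\dots,t_n]\iso\pcnt(B)[s_1,\dots,s_n]$, and invoking strong cancellativity of $R$ to get $R\iso\pcnt(B)$. The gap is in your descent step, and it is genuine. First, strong cancellativity of $R$ does not provide retractability: the parenthetical remark before the lemma states a parallel variant of the lemma (a retractable hypothesis on $R$ yields a $\pcnt$-retractable conclusion for $A\tensor R$), not a license to upgrade ``cancellative'' to ``retractable'' and conclude that $\phi$ carries $R$ onto $\pcnt(B)$. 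Second, your fallback through ``augmentation ideals'' fails on two counts: $R$ is an arbitrary trivially bracketed algebra and need not admit any character $R\to\kk$, and even when it does, reducing along a maximal ideal of the center also kills the $R$, respectively $\pcnt(B)$, part, so at best you obtain an isomorphism of fibers $A\tensor(R/\fm)\iso B/\fm' B$. Since $B$ need not split as a tensor product over its Poisson center, no amount of ``matching central fibers'' reassembles $B\iso A\tensor R$ --- this is precisely the obstacle you flagged, and your proposal does not overcome it.

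The paper's resolution sidesteps any splitting or retractability claim about $B$. Let $I$ be the ideal generated by $s_1,\dots,s_n$, so that the quotient of $B[s_1,\dots,s_n]$ by it is $B$ on the nose. Since each $s_i$ is Poisson central, $\phi\inv(s_i)=\phi_{\pcnt}\inv(s_i)$ lies in the Poisson center $R[t_1,\dots,t_n]$; hence $J:=\phi\inv(I)$ is an ideal of $R[t_1,\dots,t_n]$, and the corresponding quotient of $(A\tensor R)[t_1,\dots,t_n]=A\tensor R[t_1,\dots,t_n]$ splits automatically as $A\tensor\bigl(R[t_1,\dots,t_n]/J\bigr)$, because the ideal is generated by elements of the second (central, trivially bracketed) tensor factor --- the splitting happens on the known side, not on $B$. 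This gives $A\tensor\bigl(R[t_1,\dots,t_n]/J\bigr)\iso B$, and taking Poisson centers once more (again using $\pcnt(A)=\kk$) yields $R[t_1,\dots,t_n]/J\iso\pcnt(B)\iso R$, whence $B\iso A\tensor R$. Your outline is repaired by replacing your third step with this quotient-by-the-$s_i$ argument.
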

\begin{proof}
Suppose $R$ is strongly cancellative and let $B$ be a Poisson algebra such that there is a Poisson algebra isomorphism
$\phi: (A \tensor R)[t_1,\hdots,t_n] \to B[s_1,\hdots,s_n]$.
Then $\phi$ restricts to a Poisson isomorphism of Poisson centers $\phi_{\pcnt}: R[t_1,\hdots,t_n] \to \pcnt(B)[s_1,\hdots,s_n]$.
However, since the bracket is trivial on both sides, this is just an algebra isomorphism and so $R \iso \pcnt(B)$ by the strong cancellativitiy of $R$.
Let $I$ be the Poisson ideal of $\pcnt(B)[s_1,\hdots,s_n]$ generated by all the $s_i$'s.
Since $\phi\inv(s_i)=\phi_{\pcnt}\inv(s_i) \in R[t_1,\hdots,t_n]$, then $J:=\phi\inv(I)$ is an 
ideal of $R[t_1,\hdots,t_n]$ and 
$A \tensor (R[t_1,\hdots,t_n]/J) \iso B \tensor (\kk[s_1,\hdots,s_n]/I) \iso B$.
Since $\pcnt(A)=\kk$, then by taking the Poisson center of both sides we obtain 
$R[t_1,\hdots,t_n]/J \iso \pcnt(B) \iso R$. Hence
$B \iso A \tensor (R[t_1,\hdots,t_n]/J) \iso A \tensor R$.
\end{proof}

\begin{corollary}
Suppose $\ch(\kk)=0$. Let $A$ be a Poisson algebra with trivial bracket. Let $\delta \in \plnd(A)$ with $\delta(y)=1$ for some $y \in A$.
If $C=\ker(\delta)$ is (resp. strongly) cancellative, then $A[x;\delta]_P$ is (resp. strongly) cancellative.
\end{corollary}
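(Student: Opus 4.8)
The plan is to reduce the corollary to Lemma \ref{lem.tensor} by exhibiting $A[x;\delta]_P$ as a tensor product of the first Weyl Poisson algebra $\cP_1$ with $C$. First I would observe that since the bracket on $A$ is trivial, the Poisson derivation condition is vacuous, so $\delta$ is simply a locally nilpotent derivation of the commutative algebra $A$ admitting the slice $y$ with $\delta(y)=1$. As $\ch(\kk)=0$, the Slice Theorem for locally nilpotent derivations then gives $A=C[y]$ as a polynomial ring in one variable over $C=\ker(\delta)$, with $\delta=\partial/\partial y$.

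Next I would identify the Poisson structure on $A[x;\delta]_P$. Writing $A[x;\delta]_P=C[x,y]$ as a commutative polynomial ring, the defining relations $\{a,b\}=\{a,b\}_A=0$ and $\{a,x\}=\delta(a)$ give $\{c,x\}=\delta(c)=0$ and $\{c,y\}=0$ for every $c\in C$, so $C$ is Poisson central, while $\{y,x\}=\delta(y)=1$. Hence the subalgebra $\kk[x,y]$ carries the Weyl bracket and Poisson-commutes with $C$, yielding a Poisson isomorphism $A[x;\delta]_P\cong \cP_1\tensor C$ after the trivial relabeling identifying $\{y,x\}=1$ with the standard bracket $\{x,y\}=1$ on $\cP_1$.

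Finally I would apply Lemma \ref{lem.tensor} with $\cP_1$ in the role of the factor with trivial Poisson center and $C$ in the role of the trivial-bracket cancellative factor. This requires two verifications: that $\pcnt(\cP_1)=\kk$, which is the direct computation that any Poisson-central element of $\kk[x,y]$ must have degree $0$ in both $x$ and $y$; and that $C$, being a subalgebra of the trivial-bracket algebra $A$, itself has trivial bracket and is (resp. strongly) cancellative by hypothesis. The lemma then gives that $\cP_1\tensor C$, and hence $A[x;\delta]_P$, is (resp. strongly) Poisson cancellative.

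The main obstacle is really just setting up the tensor decomposition cleanly; the crucial input is the Slice Theorem, which converts the hypothesis $\delta(y)=1$ into the explicit coordinatization $A=C[y]$ with $\delta=\partial/\partial y$. Once that is in hand the argument is formal, and the verification $\pcnt(\cP_1)=\kk$ is the only genuine (if routine) computation.
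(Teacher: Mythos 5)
Your proposal is correct and takes essentially the same route as the paper: the paper also invokes the slice theorem (citing \cite[Lemma 14.6.4]{MR}) to write $A=C[y]$, identifies $A[x;\delta]_P \iso C \tensor \cP_1$ as Poisson algebras, and concludes by applying Lemma \ref{lem.tensor} with $\pcnt(\cP_1)=\kk$. The only difference is that you spell out the bracket computations and the triviality of $\pcnt(\cP_1)$, which the paper leaves implicit.
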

\begin{proof}
By \cite[Lemma 14.6.4]{MR}, $A=C[y]$ and $C \iso A/Ay$. It follows that $A[x;\delta]_P \iso C \tensor \cP_1(\kk)$ as Poisson algebras.
Since $C$ is (resp. strongly) cancellative and $\pcnt(\cP_1(\kk))=\kk$, then the result follows immediately from Lemma \ref{lem.tensor}.
\end{proof}

\begin{lemma}
\label{lem.kdim}
Suppose $\ch(\kk) = 0$.
If $A$ is an affine Poisson domain with $\Kdim A = 3$ and nontrivial bracket, then $\Kdim \pcnt(A) \leq 1$.
\end{lemma}
\begin{proof}
This is essentially a modification of the argument in \cite[Corollary 5.6]{GXW}. Let $\pcnt=\pcnt(A)$, let $F$ be the quotient field of $\pcnt$, and let $A_{\pcnt}$ be the localization of $A$ at $\pcnt$. As $A$ is affine over $\kk$, then $A_{\pcnt}$ is affine over $F$, so $3 = \Kdim A \geq \Kdim_F(A_{\pcnt}) + \Kdim(\pcnt)$
by \cite[Corollary 2]{SmZ}. Since $A$ has nontrivial Poisson bracket, so does $A_{\pcnt}$ and thus $\Kdim(A_{\pcnt}) \geq 2$ \cite[Lemma 13]{AF}. Hence, $\Kdim(\pcnt) \leq 1$. 
\end{proof}

For the case $\Kdim \pcnt(A) = 1$, we have two cases: $\pcnt(A)\iso\kk[t]$ or $\pcnt(A)\niso\kk[t]$. The second of these is easily dealt with.

\begin{lemma}
\label{lem.notkt}
Let $A$ be a noetherian Poisson domain such that $\pcnt=\pcnt(A)$ is connected graded and $\Kdim \pcnt \leq 1$. If $\pcnt \niso \kk[t]$, then $A$ is strongly $\pcnt$-retractable and strongly Poisson cancellative.
\end{lemma}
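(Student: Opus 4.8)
The plan is to reduce everything to a statement about the commutative Poisson center $R := \pcnt$, which carries the trivial bracket. First I would record that for the polynomial extension $A[t_1,\ldots,t_n] = A\tensor\kk[t_1,\ldots,t_n]$ the Poisson center is computed degreewise: writing $z = \sum_\alpha a_\alpha\, t^\alpha$ and testing $\{z,a\}=0$ for $a\in A$ forces every $a_\alpha\in\pcnt(A)$, so $\pcnt(A[t_1,\ldots,t_n]) = R[t_1,\ldots,t_n]$ and likewise $\pcnt(B[s_1,\ldots,s_n]) = \pcnt(B)[s_1,\ldots,s_n]$. Hence any Poisson isomorphism $\phi\colon A[t_1,\ldots,t_n]\to B[s_1,\ldots,s_n]$ restricts to an isomorphism of Poisson centers $\phi_Z\colon R[t_1,\ldots,t_n]\to \pcnt(B)[s_1,\ldots,s_n]$ of commutative algebras with trivial bracket. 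Both retractability and cancellation for $A$ will be extracted from the corresponding commutative statements for $R$.

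Next I would pin down the structure of $R$. Since $R$ is a connected graded domain with $\Kdim R \le 1$, a short argument (two $\kk$-independent homogeneous elements of equal degree have transcendental ratio, which would force quadratic growth of the Hilbert function) shows $\dim_\kk R_i \le 1$ for all $i$; thus either $R = \kk$, or $R$ is a numerical-semigroup monomial ring $\kk[t^{a_1},t^{a_2},\ldots]$, which is affine and noetherian. The hypothesis $R\niso\kk[t]$ means exactly that the semigroup omits $1$. The key commutative input is the \emph{strong rigidity} of such $R$: a one-dimensional affine domain over a field of characteristic zero not isomorphic to $\kk[t]$ admits no nonzero locally nilpotent derivation even after polynomial extension, i.e. $\ml(R[t_1,\ldots,t_n]) = R$ for all $n$ (Makar-Limanov; see \cite{Ma96}), while the degenerate case satisfies $\ml(\kk[t_1,\ldots,t_n])=\kk$ as well. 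Because $R$ has trivial bracket, $\plnd(R[t_1,\ldots,t_n]) = \lnd(R[t_1,\ldots,t_n])$ and $\pml = \ml$ here, so this fits the paper's framework verbatim.

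For \textbf{retractability} I would pull back the partial derivatives $\partial/\partial s_i$, which are locally nilpotent derivations of $\pcnt(B)[s_1,\ldots,s_n]$, through $\phi_Z$; since the Makar-Limanov invariant is an isomorphism invariant, $\phi_Z(\ml(R[t_1,\ldots,t_n])) = \ml(\pcnt(B)[s_1,\ldots,s_n])$. To evaluate the right-hand side I first invoke the classical cancellation of one-dimensional affine domains (Abhyankar--Eakin--Heinzer) to get $\pcnt(B)\iso R$, whence $\pcnt(B)\niso\kk[t]$ is also strongly rigid and $\ml(\pcnt(B)[s_1,\ldots,s_n]) = \pcnt(B)$. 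Therefore $\phi_Z(R) = \pcnt(B)$, which is precisely $\phi(\pcnt(A)) = \pcnt(B)$, giving strong $\pcnt$-retractability.

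Finally, for \textbf{cancellation} I would build a Poisson surjection $A\twoheadrightarrow B$ and symmetrize. Recovering $B = B[s_1,\ldots,s_n]/(s_1,\ldots,s_n)$, set $J = \phi^{-1}\bigl((s_1,\ldots,s_n)B[s_1,\ldots,s_n]\bigr)$, a Poisson ideal with $A[t_1,\ldots,t_n]/J\iso B$. Intersecting with the center, $(s_1,\ldots,s_n)B[s_1,\ldots,s_n]\cap \pcnt(B)[s_1,\ldots,s_n]$ is the augmentation ideal of $\pcnt(B)[s_1,\ldots,s_n]$ over $\pcnt(B)$; pulling it back through $\phi_Z$ and using $\phi_Z(R)=\pcnt(B)$ shows $J\cap R[t_1,\ldots,t_n]$ is the kernel of an $R$-algebra retraction $R[t_1,\ldots,t_n]\to R$, hence of the form $(t_1-r_1,\ldots,t_n-r_n)$ with $r_i\in R\subseteq A$. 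Since these central elements lie in $J$, the quotient factors as $A\iso A[t_1,\ldots,t_n]/(t_1-r_1,\ldots,t_n-r_n)\twoheadrightarrow A[t_1,\ldots,t_n]/J\iso B$, a Poisson surjection $A\twoheadrightarrow B$. Running the identical argument on $\phi\inv$ (legitimate because $\pcnt(B)\iso R$ is itself strongly rigid) yields $B\twoheadrightarrow A$; the composite is a surjective Poisson endomorphism of the noetherian domain $A$, hence injective by the Hopfian property, so both maps are isomorphisms and $A\iso B$. The main obstacle is the \emph{commutative} rigidity of the second paragraph—that a connected graded domain of Gelfand--Kirillov dimension at most one and not isomorphic to $\kk[t]$ keeps its Makar-Limanov invariant equal to itself after any polynomial extension; once this and the classical one-dimensional cancellation are in hand, the Poisson-theoretic steps (center computation, slice pullback, and the Hopfian argument) are routine.
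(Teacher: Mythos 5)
Your proof is correct in characteristic zero (with $\kk$ algebraically closed where you pin down the structure of $R$), and at the top level it follows the same strategy as the paper: pass to the trivial-bracket center, invoke Abhyankar--Eakin--Heinzer, and exploit strong rigidity of the one-dimensional center. The difference is one of packaging. The paper's proof is a pure citation assembly: $\pcnt$ is affine and noetherian by \cite[Lemma 2.4(1)]{TRZ}, strongly retractable by \cite[Corollary 3.4]{AEH}, hence strongly $\lnd$-rigid by \cite[Remark 3.7(6)]{LWZ1}, and then \cite[Lemma 7.6(4)]{GXW} and \cite[Theorem 7.7(4)]{GXW} lift retractability and cancellation from the center to $A$. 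You reconstruct those last two citations by hand: $\pcnt$-retractability by matching Makar-Limanov invariants of the centers, and cancellation by locating the slice ideal $(t_1-r_1,\dots,t_n-r_n)$ inside $J=\phi^{-1}\bigl((s_1,\dots,s_n)\bigr)$, producing Poisson surjections in both directions, and closing with the Hopfian property of the noetherian domain $A$. Those reconstructions are sound (the center computation, the identification of $J\cap R[t_1,\dots,t_n]$ with the kernel of an $R$-algebra retraction, and the surjection-symmetrization all check out), and the self-contained route is a genuine plus for a reader who does not want to unwind \cite{GXW}.

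There are, however, three gaps relative to the lemma as stated, which carries no characteristic or algebraic-closure hypothesis. First, your method genuinely needs $\ch(\kk)=0$: in characteristic $p$ one has $\ml(\kk[s])=\kk[s^p]\neq\kk$, since $f(s^p)\,\partial/\partial s$ is locally nilpotent with kernel $\kk[s^p]$, so the identity $\ml\bigl(\pcnt(B)[s_1,\dots,s_n]\bigr)=\pcnt(B)$ on which your retractability step rests fails outright; this is precisely why \cite{LWZ1} and \cite{GXW} pass to higher (Hasse--Schmidt) derivations and $\plndh$-rigidity, and your argument would need that substitution to cover the stated generality --- the paper's citation chain is characteristic-free. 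Second, your Hilbert-function argument that $\dim_\kk R_i\le 1$ (hence the numerical-semigroup description) uses algebraic closedness: over $\mathbb{R}$ the connected graded domain $\mathbb{R}[x,y]/(x^2+y^2)$ has Krull dimension $1$ with $\dim R_1=2$. What your proof actually consumes is only that $R$ is affine and noetherian, which \cite[Lemma 2.4(1)]{TRZ} provides over any field. Third, the strong rigidity statement you attribute to \cite{Ma96} is not proved there (that paper treats a specific threefold); the correct source is the Crachiola--Makar-Limanov rigidity theorem for one-dimensional affine domains, whose hypothesis excludes polynomial rings $\kk'[t]$ over \emph{algebraic extensions} $\kk'/\kk$, not merely $\kk[t]$. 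That stronger exclusion does hold here --- a connected graded domain with $R_0=\kk$ contains no proper algebraic extension of $\kk$ --- but it must be said; alternatively, and matching the paper, one can derive strong $\lnd$-rigidity of $R$ directly from its strong retractability via \cite[Remark 3.7(6)]{LWZ1} and avoid the issue entirely.
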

\begin{proof}
Since $\pcnt$ has trivial Poisson bracket, then we may apply results from the associative setting. 
If $\Kdim \pcnt = 0$, $\pcnt$ is strongly retractable and strongly cancellative by classical results (see \cite[(1.9)]{AEH}). 

Now assume $\Kdim \pcnt = 1$. Applying \cite[Lemma 2.4(1)]{TRZ}, we have that $\pcnt$ is noetherian and finitely generated over $\kk$. By \cite[Corollary 3.4]{AEH}, $\pcnt$ is strongly retractable 
(see also \cite[Lemma 3.1]{TRZ}). Thus, $\pcnt$ is strongly $\lnd$-rigid \cite[Remark 3.7(6)]{LWZ1},
so $A$ is strongly $\pcnt$-retractable \cite[Lemma 7.6(4)]{GXW} and strongly Poisson cancellative \cite[Theorem 7.7(4)]{GXW}. 
\end{proof}

We now attempt to deal with the case $\pcnt(A)=\kk[t]$.
We have the following Poisson analogue of \cite[Lemma 3.5]{TRZ}.

\begin{lemma}
\label{lem.tcnt}
Let $A$ be a noetherian Poisson algebra such that $\pcnt=\pcnt(A)=\kk[t]$ for some $t \in A$.
\begin{enumerate}
\item \label{tcnt1} If $\cP$ is a property such that the Poisson $\cP$-discriminant is $t$, then $A$ is strongly $\pcnt$-retractable and strongly Poisson cancellative.

\item \label{tcnt2} If $t$ is in the Poisson ideal $\{A,A\}$ of $A$ and $\{A,A\} \neq A$, then $A$ is strongly $\pcnt$-retractable and strongly Poisson cancellative.
\end{enumerate}
\end{lemma}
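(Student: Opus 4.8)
The plan is to reduce both parts to the discriminant--rigidity results of \cite{GXW} used in Lemma \ref{lem.notkt}, but now applied with the $d$-relative invariants at $d=t$ in place of the absolute ($d=1$) ones. The point is that, although $\pcnt=\kk[t]$ is \emph{not} strongly $\lnd$-rigid (it carries the derivation $d/dt$), it is strongly $\lnd_t$-rigid for an essentially trivial reason.

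For part \ref{tcnt1}, I would first record that $\pcnt=\kk[t]$ is strongly $\lnd_t$-rigid: any locally nilpotent derivation $\partial$ of a polynomial extension $\kk[t][s_1,\dots,s_n]$ with $t\in\ker\partial$ kills every element of $\kk[t]$, since $\kk[t]$ is generated by $t$ as a $\kk$-algebra (in positive characteristic one argues identically with higher derivations, using $\lnd^H_t$). Granting the hypothesis $d_\cP(A)=t$, I would then invoke \cite[Lemma 7.6(4)]{GXW} and \cite[Theorem 7.7(4)]{GXW} with $d=t$ to conclude that $A$ is strongly $\pcnt$-retractable and strongly Poisson cancellative. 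The one point requiring care is that the discriminant of each central polynomial extension $A[t_1,\dots,t_n]$ is again $t$, so that the machinery applies to all the extensions appearing in the \emph{strong} versions of the properties; this should follow from the behaviour of the $\cP$-locus under extension by variables with trivial bracket.

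For part \ref{tcnt2}, I would exhibit a Poisson property whose discriminant is $t$ and then quote part \ref{tcnt1}. Take $\cP$ to be the property that the Poisson commutator ideal of an algebra is the whole algebra; this is plainly a Poisson property. For $\fm=(t-a)\in\Maxspec(\kk[t])$, the commutator ideal of $A/\fm A$ is the image of $\{A,A\}$, which contains the image of $t$, namely the scalar $a$. When $a\neq 0$ this image contains a unit, so $A/\fm A$ has $\cP$; when $a=0$ we have $tA\subseteq\{A,A\}$, so the commutator ideal of $A/tA$ equals $\{A,A\}/tA$, which is proper exactly because $\{A,A\}\neq A$. Hence $D_\cP(A)=\{(t)\}$, so $I_\cP(A)=(t)$ and $d_\cP(A)=t$, and part \ref{tcnt1} applies.

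The main obstacle I anticipate is bookkeeping rather than conceptual: confirming that the hypotheses of the $d$-relative forms of \cite[Lemma 7.6(4)]{GXW} and \cite[Theorem 7.7(4)]{GXW} hold simultaneously for all the polynomial extensions in the strong properties, and --- if $\kk$ is not taken algebraically closed --- checking that the identification $D_\cP(A)=\{(t)\}$ still produces $I_\cP(A)=(t)$ at the level of the discriminant ideal and not merely at closed points. The rigidity of $\kk[t]$ relative to $t$ and the unit/non-unit dichotomy for the image of $t$ in $A/\fm A$ are both routine.
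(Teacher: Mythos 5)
Your proposal is correct and takes essentially the same route as the paper: part (2) is verbatim the paper's argument (take $\cP$ to be the property that $\{B,B\}=B$; the image of $t$ in $A/\fm A$ is a unit for $\fm \neq (t)$ since $t\in\{A,A\}$, while $\{A,A\}/tA$ is proper because $\{A,A\}\neq A$, so $D_\cP(A)=\{(t)\}$ and $d_\cP(A)=t$), and part (1) feeds the discriminant $t$ into the same retraction/cancellation machinery of \cite{GXW}. The only difference is how the rigidity input is justified: you prove strong $\plnd_t$-rigidity of $\pcnt=\kk[t]$ directly (a locally nilpotent (higher) derivation killing $t$ kills all of $\kk[t]$, since $t$ generates it), which is a correct and more self-contained shortcut, whereas the paper obtains it by citing effectiveness of $t$ via \cite[Example 2.8]{LWZ1} and \cite[Lemma 7.15(2)]{GXW} before invoking \cite[Lemma 7.6(4)]{GXW} and \cite[Theorem 7.16(2)]{GXW} (the latter playing the role of your Theorem 7.7(4), with your flagged bookkeeping about discriminants of central polynomial extensions handled internally by those cited results).
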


\begin{proof}
\ref{tcnt1} By \cite[Example 2.8]{LWZ1}, $t$ is \emph{effective} (we refer the reader to that reference for the definition of effective). Thus, $\pcnt$ is strongly $\plnd_t$-rigid \cite[Lemma 7.15(2)]{GXW}, so $A$ is strongly $\pcnt$-retractable \cite[Lemma 7.6(4)]{GXW}. Furthermore, $A$ is strongly Poisson cancellative \cite[Theorem 7.16(2)]{GXW}.

\ref{tcnt2} 
Let $\cP$ be the property of a Poisson algebra $B$ that $\{B,B\}=B$. We claim the $\cP$-discriminant of $A$ is $t$. By hypothesis, $(t) \in \Maxspec(\pcnt)$ and $t \in \{A,A\}$. Set $\overline{A}=A/(t)$, then $\{\overline{A},\overline{A}\}\neq \overline{A}$. Hence, $(t) \in D_\cP(A)$.
On the other hand, for $\alpha \in \kk^\times$, $(t-\alpha) \notin \{A,A\}$ and so by maximality, 
$(t-\alpha) + \{A,A\} = A$. Hence, $(t-\alpha) \notin D_\cP(A)$. Thus, the $\cP$-discriminant is $t$. 
\end{proof}

The following result is the Poisson version of \cite[Lemma 2.6]{TRZ}.

\begin{lemma}
\label{lem.gldim}
Let $A$ be a connected graded Poisson domain and let $t \in \pcnt(A)$ be a homogeneous element of positive degree $d$. For every $\alpha \in \kk^\times$, $A/(t-\alpha)$ contains a copy of $(A[t\inv])_0$ as a Poisson subalgebra. Moreover, suppose that $A$ is generated in degree 1 and $d\neq 0$ in $\kk$. Then 
\[ \gldim A/(t-\alpha)= \gldim (A[t\inv])_0,\]
which is finite if $\gldim A<\infty$.
\end{lemma}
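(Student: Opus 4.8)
The plan is to treat the two assertions separately, noting that the first is genuinely Poisson-theoretic while the second concerns only the underlying graded algebra.

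For the embedding, I would first observe that $(t-\alpha)$ is a Poisson ideal of $A$: since $t \in \pcnt(A)$ we have $\{t-\alpha, A\} = 0 \subseteq (t-\alpha)$, so $A/(t-\alpha)$ carries a natural Poisson structure. Because $A$ is a connected graded domain and $t$ is homogeneous of positive degree, $t \neq 0$, and hence $t - \alpha \neq 0$ is regular. As $\alpha \in \kk^\times$, the image of $t$ in $A/(t-\alpha)$ is the unit $\alpha$, so the localization map induces a Poisson isomorphism $A/(t-\alpha) \iso A[t\inv]/(t-\alpha)$. Since $t$ is Poisson central, the localization formula collapses to $\{a/t^k, b/t^l\} = \{a,b\}/t^{k+l}$, which is degree preserving; in particular $(A[t\inv])_0$ is a Poisson subalgebra, and the composite
\[ \psi\colon (A[t\inv])_0 \hookrightarrow A[t\inv] \twoheadrightarrow A[t\inv]/(t-\alpha) \iso A/(t-\alpha), \qquad a/t^k \mapsto \alpha^{-k}\,\overline{a}, \]
is a Poisson homomorphism.

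The crux of the first part is the injectivity of $\psi$, i.e. that $(A[t\inv])_0 \cap (t-\alpha)A[t\inv] = 0$. I would take $x \in (A[t\inv])_0$ with $x = (t-\alpha)y$, decompose $y = \sum_n y_n$ into its finitely many nonzero graded components, and compare graded pieces: in every degree $n \neq 0$ one obtains $\alpha y_n = t y_{n-d}$. Since $t$ is a homogeneous unit and $A[t\inv]$ is a domain, running this recursion along each residue class modulo $d$ and invoking the finiteness of the support of $y$ forces $y_n = 0$ for all $n \neq 0$; feeding $y = y_0$ back into the degree-$0$ comparison then forces $x = 0$. This is the step I expect to require the most care, as it is where the domain hypothesis and the invertibility of $t$ are genuinely used.

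For the global dimension statement I would emphasize that $\gldim$ depends only on the underlying algebra and not on the Poisson bracket, so the Poisson structure plays no role here. Under the standing hypotheses, namely $A$ connected graded generated in degree $1$ and $t$ central homogeneous regular of degree $d$ with $d \neq 0$ in $\kk$, the identification $A/(t-\alpha) \iso A[t\inv]/(t-\alpha)$ together with the induced $\ZZ/d$-grading on this quotient (whose identity component is exactly $(A[t\inv])_0$) reduces the claim to the associative result \cite[Lemma 2.6]{TRZ}. Applying that lemma to $A$ regarded as an ordinary graded algebra yields $\gldim A/(t-\alpha) = \gldim (A[t\inv])_0$, along with finiteness whenever $\gldim A < \infty$.
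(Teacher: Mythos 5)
Your proof is correct, and on the global dimension statement it coincides with the paper's: both you and the authors observe that $\gldim$ ignores the Poisson bracket and defer to \cite[Lemma 2.6(2),(3)]{TRZ}. For the embedding, though, you take a genuinely different (more hands-on) route. The paper passes through the $d$th Veronese subalgebra $T=\bigoplus_{\ell\ge 0}A_{d\ell}$ and invokes the chain of dehomogenization isomorphisms $T/(t-\alpha)=T/(\alpha\inv t-1)\iso (T[(\alpha\inv t)\inv])_0\iso (T[t\inv])_0\iso (A[t\inv])_0$, then concludes via the embedding $T/(t-\alpha)\hookrightarrow A/(t-\alpha)$; you instead map $(A[t\inv])_0$ straight into $A/(t-\alpha)\iso A[t\inv]/(t-\alpha)$ and verify injectivity by the graded recursion $\alpha y_m=t y_{m-d}$ on the finitely supported components of $y$. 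These are close cousins: the injectivity of the paper's embedding rests on the same kind of componentwise computation (that $T\cap(t-\alpha)A=(t-\alpha)T$, seen by splitting $y$ into residue classes modulo $d$), which the paper leaves implicit, so your version buys a self-contained argument that makes visible exactly where the domain hypothesis and $d>0$ enter, at the cost of the slicker Veronese packaging; conversely the paper's route generalizes more cleanly since the standard isomorphism $T/(t-1)\iso(T[t\inv])_0$ is off-the-shelf. One cosmetic slip in your write-up: once you have reduced to $y=y_0$, it is the degree-$d$ comparison ($ty_0=0$, hence $y_0=0$ since $A[t\inv]$ is a domain) that kills $y_0$, not the degree-$0$ one, which then yields $x=-\alpha y_0=0$; in fact your recursion applied at the top of the support in the residue class of $0$ already forces $y_0=0$ directly. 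This does not affect correctness.
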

\begin{proof}
Let $I$ be the principal ideal of $A$ generated by $t-\alpha$, and let $a(t-\alpha) \in I$. Since $t-\alpha \in \pcnt(A)$, then for any $b \in A$,
\[ \{ b, a(t-\alpha) \} = \{ b, a \}(t-\alpha) + \{ b, t-\alpha\} a = \{b,a\}(t-\alpha) \in I.\]
That is, $I$ is a Poisson ideal of $A$.

Let $d=\deg t$ and let $T=\bigoplus_{\ell\ge 0} A_{d\ell}$ be the $d$th Veronese subalgebra of $A=\bigoplus_{\ell\ge 0} A_\ell$. Hence we have natural isomorphisms 
\[
T/(t-\alpha)=T/(\alpha^{-1}t-1)\cong (T[(\alpha^{-1}t)^{-1}])_0 \cong (T[t^{-1}])_0\cong (A[t^{-1}])_0
\]
where each isomorphism appearing above is a Poisson isomorphism since $t \in \pcnt(A)$. The result follows via the embedding $T/(t-\alpha)\hookrightarrow A/(t-\alpha)$. The rest of the argument regarding the global dimension is identical to \cite[Lemma 2.6(2),(3)]{TRZ}. 
\end{proof}

The following results follow directly from \cite[Lemma 3.7 (2),(3)]{TRZ}, with a substitution of Lemma \ref{lem.gldim} for \cite[Lemma 2.6]{TRZ} where necessary.

\begin{lemma}
\label{lem.kt}
Suppose $\ch(\kk)=0$.
Let $A$ be a noetherian connected graded Poisson domain that is generated in degree 1. Assume $\pcnt=\pcnt(A)=\kk[t]$ for some homogeneous element $t \in A$ of positive degree, and $\gldim A/(t) = \infty$. If $\gldim A/(t-1) < \infty$ or $\gldim A < \infty$, then $A$ is strongly $\pcnt$-retractable and strongly Poisson cancellative.
\end{lemma}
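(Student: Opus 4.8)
The plan is to exhibit a Poisson property $\cP$ whose $\cP$-discriminant equals $t$ and then to invoke Lemma~\ref{lem.tcnt}\ref{tcnt1}. The natural candidate is the property
\[
\cP = \text{``the algebra has finite global dimension''}.
\]
This is a legitimate Poisson property: global dimension depends only on the underlying commutative algebra structure, and a Poisson isomorphism is in particular an algebra isomorphism, so $\cP$ is invariant under Poisson isomorphism within the relevant class.

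First I would compute the $\cP$-locus. Since $\pcnt = \kk[t]$ with $t$ homogeneous of positive degree $d$, since $\ch(\kk)=0$ forces $d \neq 0$ in $\kk$, and since $A$ is generated in degree $1$, Lemma~\ref{lem.gldim} applies and yields
\[
\gldim A/(t-\alpha) = \gldim (A[t\inv])_0 \qquad \text{for every } \alpha \in \kk^\times ,
\]
a value independent of $\alpha$. In the case $\gldim A < \infty$, Lemma~\ref{lem.gldim} asserts directly that this common value is finite; in the case $\gldim A/(t-1) < \infty$, specializing to $\alpha = 1$ shows $\gldim (A[t\inv])_0 < \infty$. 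Either hypothesis therefore makes $A/(t-\alpha)$ of finite global dimension for all $\alpha \in \kk^\times$, so every maximal ideal $(t-\alpha)$ with $\alpha \neq 0$ lies in $L_\cP(A)$. The remaining hypothesis $\gldim A/(t) = \infty$ places $(t)$ in the discriminant set $D_\cP(A)$.

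It then follows that $D_\cP(A) = \{(t)\}$, so $I_\cP(A) = (t)$ and the Poisson $\cP$-discriminant of $A$ is $d_\cP(A)=t$. Lemma~\ref{lem.tcnt}\ref{tcnt1} then immediately gives that $A$ is strongly $\pcnt$-retractable and strongly Poisson cancellative, completing the argument. The step I expect to be the main obstacle is establishing $D_\cP(A) = \{(t)\}$ \emph{exactly}, i.e.\ ruling out every maximal ideal of $\kk[t]$ other than $(t)$. When $\kk$ is algebraically closed---the setting of the intended application in Theorem~\ref{thm.3var}---every maximal ideal has the form $(t-\alpha)$ and the computation above is decisive; for a general field of characteristic zero one must additionally control $A/\fm A$ for $\fm = (p(t))$ with $p$ irreducible of degree $\geq 2$, which can be reduced to the linear case by base change to $\overline{\kk}$, where $p$ splits into distinct linear factors, none equal to $t$. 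Verifying that finite global dimension is stable under this field extension is the only genuinely delicate point; the remainder is a direct application of Lemmas~\ref{lem.gldim} and~\ref{lem.tcnt}.
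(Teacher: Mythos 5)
Your proof is correct and is essentially the paper's own argument: the paper's proof is a citation of \cite[Lemma 3.7(2),(3)]{TRZ} with Lemma \ref{lem.gldim} substituted for \cite[Lemma 2.6]{TRZ}, and that argument is precisely your discriminant computation --- take $\cP$ to be finite global dimension, use Lemma \ref{lem.gldim} to show $D_\cP(A)=\{(t)\}$ so that $d_\cP(A)=t$, and conclude via Lemma \ref{lem.tcnt}\ref{tcnt1}. Your closing caveat is well taken but harmless: over a non-closed field the computation only controls the $\kk$-rational points of $\Maxspec\kk[t]$, a limitation equally present in the paper's citation-level proof, and irrelevant in the intended application (Theorem \ref{thm.3var}), where $\kk$ is algebraically closed.
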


\begin{theorem}
Suppose $\ch(\kk)= 0$. Let $A=\kk[x_1,\hdots,x_n]$ be a Poisson algebra with the standard (Poisson) grading and $\pcnt=\pcnt(A)$. Suppose that $\Kdim \pcnt \leq 1$ and $\gldim A/(t)=\infty$ for every homogeneous element $t \in \pcnt$ of positive degree. Then $A$ is Poisson cancellative.
\end{theorem}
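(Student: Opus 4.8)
The plan is to prove the theorem by a dichotomy on the structure of the Poisson center $\pcnt$, matching each case to one of the cancellation lemmas already established, after first recording the standing facts about the ambient ring. First I would note that $A=\kk[x_1,\hdots,x_n]$, equipped with the standard grading, is a noetherian connected graded Poisson domain generated in degree $1$, with $\gldim A = n < \infty$. Moreover $\pcnt=\pcnt(A)$ is a connected graded subalgebra: since the bracket is graded, the homogeneous components of any central element are again central, so $\pcnt$ is graded, and $\pcnt_0\subseteq A_0=\kk$ forces $\pcnt_0=\kk$. The hypothesis already supplies $\Kdim\pcnt\le 1$, so there is no need to invoke Lemma \ref{lem.kdim}; the only remaining work is a clean case split on $\pcnt$ together with a verification that each case meets the hypotheses of Lemma \ref{lem.notkt} or Lemma \ref{lem.kt}.

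The case split is on whether $\pcnt$ is abstractly isomorphic to a polynomial ring in one variable. If $\pcnt\niso\kk[t]$, then since $A$ is a noetherian Poisson domain with $\pcnt$ connected graded and $\Kdim\pcnt\le 1$, Lemma \ref{lem.notkt} applies directly and gives that $A$ is strongly $\pcnt$-retractable and strongly Poisson cancellative. I would point out that this branch subsumes the case $\Kdim\pcnt=0$, in which $\pcnt=\kk$ (a connected graded domain of Krull dimension zero) and indeed $\kk\niso\kk[t]$.

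If instead $\pcnt\iso\kk[t]$, then $\Kdim\pcnt=1$ and the key step is to upgrade the abstract isomorphism to a presentation $\pcnt=\kk[t]$ by a single \emph{homogeneous} element $t$ of positive degree, so that the hypotheses of Lemma \ref{lem.kt} are literally met. This is the one genuinely delicate point: it uses that a connected graded commutative domain abstractly isomorphic to $\kk[t]$ is generated by a single homogeneous element, which one sees by transporting the grading to a $\kk^\times$-action on $\operatorname{Spec}\pcnt\iso\mathbb{A}^1$; since every algebra automorphism of $\kk[t]$ is affine and the connected condition $\pcnt_0=\kk$ pins the fixed point at the origin, the action is $t\mapsto\lambda^d t$ for some $d$, whence $t$ is homogeneous of degree $d$, with $d\ge 1$ because the grading is positive. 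Granting such a homogeneous generator $t$, the theorem's hypothesis yields $\gldim A/(t)=\infty$, while $\gldim A = n<\infty$; since $A$ is a noetherian connected graded Poisson domain generated in degree $1$ with $\pcnt=\kk[t]$ for homogeneous $t$ of positive degree, Lemma \ref{lem.kt} applies and again gives that $A$ is strongly $\pcnt$-retractable and strongly Poisson cancellative.

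In both branches $A$ is strongly Poisson cancellative, and specializing to the case $n=1$ (as the definitions permit the word ``strongly'' to be dropped) gives that $A$ is Poisson cancellative, completing the argument. The hard part will be the graded-generator upgrade in the second case, ensuring the abstract identification $\pcnt\iso\kk[t]$ can be realized with a homogeneous generator of positive degree so that the positive-degree hypothesis on $\gldim A/(t)$ can be invoked; everything else reduces to checking that the polynomial ring $A$ satisfies the standing hypotheses (noetherian, connected graded domain, generated in degree $1$, finite global dimension) required by Lemma \ref{lem.notkt} and Lemma \ref{lem.kt}.
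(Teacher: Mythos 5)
Your proposal is correct and takes essentially the same route as the paper, whose entire proof is the dichotomy you describe: if $\pcnt \niso \kk[t]$ apply Lemma \ref{lem.notkt}, otherwise apply Lemma \ref{lem.kt}. The one point you elaborate beyond the paper --- upgrading the abstract isomorphism $\pcnt \iso \kk[t]$ to a homogeneous generator of positive degree so that Lemma \ref{lem.kt} literally applies --- is left implicit there, and your $\kk^\times$-action argument for it is sound.
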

\begin{proof}
If $\pcnt \niso \kk[t]$, then the result follows from Lemma \ref{lem.notkt}. Otherwise the result follows from Lemma \ref{lem.kt}.
\end{proof}

Note that the following theorem gives an affirmative answer to \cite[Question 8.4]{GXW} asking whether any polynomial Poisson algebra in three variables with nontrivial Jacobian bracket is Poisson cancellative?

\begin{theorem}
\label{thm.3var}
Suppose $\kk$ is algebraically closed and $\ch(\kk)=0$. Let $A$ be a quadratic polynomial Poisson algebra with three variables. If $A$ has nontrivial bracket, then $A$ is strongly Poisson cancellative.
\end{theorem}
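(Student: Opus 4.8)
The plan is to run the argument on the Poisson center $\pcnt=\pcnt(A)$. Since $A=\kk[x,y,z]$ is a connected graded Poisson domain that is a polynomial ring, it is noetherian with $\Kdim A=3$ and (commutative) $\gldim A=3<\infty$, it is generated in degree $1$, and $\pcnt$ is a connected graded subalgebra. Because the bracket is nontrivial, Lemma \ref{lem.kdim} gives $\Kdim\pcnt\le 1$. I would first dispose of the case $\pcnt\niso\kk[t]$ (which includes $\pcnt=\kk$, since a field is not isomorphic to $\kk[t]$) directly by Lemma \ref{lem.notkt}, which yields that $A$ is strongly $\pcnt$-retractable and strongly Poisson cancellative. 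This reduces everything to the case $\pcnt=\kk[t]$ for a single homogeneous generator $t$ of positive degree $d$.

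For $\pcnt=\kk[t]$ I would split on $d$. If $d\ge 2$, then $A/(t)=\kk[x,y,z]/(t)$ is a graded hypersurface whose embedding dimension (three) exceeds its Krull dimension (two); hence it is not regular and $\gldim A/(t)=\infty$. Since $\gldim A=3<\infty$, Lemma \ref{lem.kt} applies and delivers strong Poisson cancellation immediately.

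The remaining and genuinely delicate stratum is $d=1$, and this is where I expect the main obstacle to lie: the uniform machinery breaks down, because after a linear change of variables $t=z$ becomes a degree-one Poisson-central variable, so $A/(t)\cong\kk[x,y]$ has \emph{finite} global dimension and Lemma \ref{lem.kt} is unavailable. Instead I would invoke the classification of Theorem \ref{thm.homog}: up to Poisson isomorphism, $A$ is one of the nontrivial algebras listed there, with $z$ central and $\{x,y\}=f\in A_2$. I would then separate two subcases according to whether $f$ involves $z$. When $f$ is independent of $z$ (cases (4a) $f=x^2$ and (5a) $f=\lambda xy$), $A$ factors as $A_0\tensor\kk[z]$ where $A_0=\kk[x,y]$ carries the bracket $f$; a direct computation shows $\pcnt(A_0)=\kk$, so Lemma \ref{lem.tensor} (with $R=\kk[z]$, which is strongly cancellative of trivial bracket) yields strong Poisson cancellation.

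When $f$ genuinely involves $z$ (cases (2), (3), (4b), (4c), (5b)), I would realize $t=z$ as a Poisson discriminant and apply Lemma \ref{lem.tcnt}\ref{tcnt1}. Considering the fibers $A/(z-\alpha)$, a rescaling of $x,y$ in each case shows that all fibers with $\alpha\neq 0$ are mutually Poisson isomorphic to the fixed algebra $A/(z-1)$, while the special fiber $A/(z)$ lies in a different isomorphism class; indeed each generic/special pair realizes two distinct classes of Theorem \ref{thm.filtered} (Weyl versus commutative in (2), $x^2+1$ versus $x^2$ in (4b), and so on), which are proved non-isomorphic there. Taking $\cP$ to be the Poisson property ``isomorphic to $A/(z-1)$'', the $\cP$-locus is exactly $\{(z-\alpha):\alpha\neq 0\}$ and the $\cP$-discriminant set is $\{(z)\}$, so $d_\cP(A)=z=t$ and Lemma \ref{lem.tcnt}\ref{tcnt1} gives strong Poisson cancellation. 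The crux, therefore, is the $d=1$ stratum: the global-dimension obstruction vanishes, forcing one to extract $t$ as a discriminant from the fine classification, and the subtle point is that the discriminant method succeeds precisely when the generic and special fibers differ — exactly the dichotomy ``$f$ depends on $z$'' versus ``$f$ is $z$-free'' that the tensor decomposition handles.
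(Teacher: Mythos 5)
Your proposal is correct and follows essentially the same route as the paper: bound $\Kdim\pcnt(A)\le 1$ via Lemma \ref{lem.kdim}, dispose of $\pcnt\niso\kk[t]$ by Lemma \ref{lem.notkt}, use Lemma \ref{lem.kt} when $\gldim A/(t)=\infty$ (your $d\ge 2$ stratum is exactly this dichotomy, which the paper phrases via regularity of $A/(t)$ plus a Hilbert series argument forcing $\deg t=1$), and then run through the classification of Theorem \ref{thm.homog} using Lemma \ref{lem.tensor} for the $t$-free brackets and Lemma \ref{lem.tcnt} with a discriminant property for the rest. The only differences are cosmetic: you treat case (2) $f=t^2$ by the same discriminant argument rather than citing \cite[Example 7.2]{GXW}, and your uniform choice of $\cP$ as ``isomorphic to $A/(t-1)$'' (holding generically, failing at the special fiber) actually fits the paper's definition of the $\cP$-discriminant more cleanly than the properties as literally stated in the paper's cases (3) and (4b).
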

\begin{proof}
Set $\pcnt=\pcnt(A)$. By Lemma \ref{lem.kdim}, $\Kdim \pcnt \leq 1$. If $\pcnt \niso \kk[t]$, then the result follows from Lemma \ref{lem.notkt}. 

We now assume $\pcnt \iso \kk[t]$. If $\gldim A/(t) = \infty$, then we can apply Lemma \ref{lem.kt}, so suppose $\gldim A/(t) < \infty$. Since $A/(t)$ is a regular graded algebra, it is again a polynomial algebra. By an easy Hilbert series argument, we can write $A=\kk[x,y,t]$ with $t \in \pcnt$ and $\{x,y\}=f \in A_2$. That is, $A$ is one of the algebras from Theorem \ref{thm.homog} and we can consider each case:
\begin{enumerate}
    \item[(2)] $f=t^2$. See \cite[Example 7.2]{GXW}.
    \item[(3)] $f=xt$. Let $\cP$ be the property that $A/\fm_\alpha A$ has trivial Poisson bracket, \[\fm_\alpha = (t-\alpha) \in \Maxspec(\pcnt).\] Then the $\cP$-discriminant is $t$. Hence, $A$ is strongly Poisson cancellative by Lemma \ref{lem.tcnt}.
    \item[(4a)] $f=x^2$. In this case, $A \iso A' \tensor \kk[t]$ where $A'=\kk[x,y]$ with Poisson bracket $\{x,y\}=x^2$. The result follows from Lemma \ref{lem.tensor}.
    \item[(4b)] $f=x^2+t^2$. Let $\cP$ be the property that $A/\fm_\alpha A$ is isomorphic to $A/\fm_0 A$, with $\fm_\alpha$ as in (3). Then the $\cP$-discriminant is $t$. Hence, $A$ is strongly Poisson cancellative by Lemma \ref{lem.tcnt}.
\end{enumerate}
Cases (4c) and (5b) are similar to (4b), while (5a) is similar to (4a).
\end{proof}

\subsection{Veronese Poisson algebras.}
Recall from Section \ref{sec.filt} that Veronese subalgebras of $\NN$-graded Poisson algebras are Poisson subalgebras. Here we prove a cancellation result for Veronese subalgebras of quadratic polynomial Poisson algebras in three variables.

Let $A$ be a graded commutative algebra generated in degree one. Then $A$ is a \emph{(graded) isolated singularity} if $\gldim A = \infty$ but $A_{(\fp)}$ is regular for any homogeneous prime ideal $\fp \neq A_{\geq 1}$. We also refer to \cite{ueyama} for a suitable notion for noncommutative algebras.

\begin{lemma}
\label{lem.ver}
Suppose $\ch(\kk)=0$. Let $A$ be a connected graded Poisson domain that is generated in degree one, and let $t \in \pcnt(A)$ be a homogeneous element of positive degree $d$. Then for every $\alpha \in \kk$, $\alpha \neq 0$, $A/(t-\alpha)$ has finite global dimension.
\end{lemma}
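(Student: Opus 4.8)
The plan is to reduce the computation of $\gldim A/(t-\alpha)$ to the global dimension of a single localization, and then to read off finiteness from the regularity of $\mathrm{Proj}(A)$. Since $A$ is generated in degree one and $\ch(\kk)=0$ forces $d\neq 0$ in $\kk$, Lemma~\ref{lem.gldim} applies and gives
\[
\gldim A/(t-\alpha)=\gldim (A[t\inv])_0
\]
for every $\alpha\in\kk^\times$. In particular the answer does not depend on $\alpha$, so it suffices to prove that the degree-zero subalgebra $B:=(A[t\inv])_0$ has finite global dimension. As global dimension is an invariant of the underlying commutative ring, the Poisson structure plays no further role here and I may argue entirely in the commutative category.

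The next step is to identify $B$ geometrically. Writing $Y=\mathrm{Proj}(A)$ and noting that $t$, being homogeneous of positive degree, lies in the irrelevant ideal $A_{\geq 1}$, the basic open set $D_+(t)\subseteq Y$ is a well-defined affine open whose coordinate ring is exactly $\mathcal{O}_Y(D_+(t))=(A[t\inv])_0=B$. Equivalently, $\mathrm{Spec}(B)$ is the locus $\{t\neq 0\}$ inside the punctured affine cone $\mathrm{Spec}(A)\setminus V(A_{\geq 1})$; since $t$ vanishes at the cone vertex, cutting by $t-\alpha$ with $\alpha\neq 0$ produces a section that avoids the vertex, which is the source of the improved regularity.

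The crux is to establish that $B$ is regular. For this I would invoke that $A$ is a graded isolated singularity, so that $A_{(\fp)}$ is regular for every homogeneous prime $\fp\neq A_{\geq 1}$; equivalently, $Y=\mathrm{Proj}(A)$ is smooth. Then $D_+(t)$, being an open subscheme of the smooth $Y$, is itself smooth, and an affine commutative $\kk$-algebra of finite type that is regular has finite global dimension equal to its Krull dimension. Combining this with the displayed equality yields $\gldim A/(t-\alpha)<\infty$, as desired. The main obstacle is precisely this regularity input: for a general connected graded domain generated in degree one the conclusion can fail (the cone over a singular projective variety, cut away from its vertex, remains singular), so the argument must use that $\mathrm{Proj}(A)$ is smooth. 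In the intended application, where $A$ is a Veronese subalgebra of a polynomial ring and hence $\mathrm{Proj}(A)$ is a projective space, this smoothness is automatic, which is what makes the lemma usable.
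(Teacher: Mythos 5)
Your proof follows the same route as the paper's own: the paper likewise invokes Lemma~\ref{lem.gldim} to reduce the claim to finiteness of $\gldim (A[t\inv])_0$ (its proof says ``positive global dimension'', an evident typo for ``finite''), and then defers the rest to \cite[Lemma 4.2]{TRZ}; your identification of $(A[t\inv])_0$ with the coordinate ring of the affine open $D_+(t)\subseteq \mathrm{Proj}(A)$ and the regularity argument there is precisely the commutative content of that citation. The one substantive point is the one you flagged yourself: smoothness of $\mathrm{Proj}(A)$ does \emph{not} follow from the hypotheses as stated, and the lemma is in fact false without some such input. For instance, take $A=\kk[x,y,z]/(y^2z-x^3)$ with the zero bracket and $t=z$: this is a connected graded Poisson domain generated in degree one with $t\in\pcnt(A)$ homogeneous of positive degree, yet $A/(t-1)\iso \kk[x,y]/(y^2-x^3)$ has infinite global dimension. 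The missing hypothesis is that $A$ be a graded isolated singularity (so that $A_{(\fp)}$ is regular for every homogeneous prime $\fp\neq A_{\geq 1}$, i.e.\ $\mathrm{Proj}(A)$ is smooth); this is what the appeal to \cite[Lemma 4.2]{TRZ} supplies, and it does hold in the only place Lemma~\ref{lem.ver} is used, namely the proof of Theorem~\ref{thm.ver} (and hence Corollary~\ref{cor:veronese}, where the Veronese of $\kk[x,y,z]$ has $\mathrm{Proj}$ equal to $\mathbb{P}^2$). So your argument is a correct proof of the corrected statement: importing the isolated-singularity hypothesis is not a gap in your reasoning but a needed repair to the lemma, which should either carry that hypothesis explicitly or be stated only in the setting of Theorem~\ref{thm.ver}.
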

\begin{proof}
It suffices by Lemma \ref{lem.gldim} to prove that $(A[t\inv])_0$ has positive global dimension. The rest of the proof follows \cite[Lemma 4.2]{TRZ}.
\end{proof}

\begin{theorem}
\label{thm.ver}
Suppose $\ch(\kk)=0$. Let $A$ be a noetherian connected graded Poisson domain generated in degree 1. If $A$ is a graded isolated singularity and $\Kdim \pcnt(A) \leq 1$, then $A$ is strongly Poisson cancellative.
\end{theorem}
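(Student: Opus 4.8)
The plan is to split on the structure of the Poisson center $\pcnt := \pcnt(A)$ and to invoke the two cancellation criteria already established for connected graded Poisson domains whose center has Krull dimension at most one, using the isolated singularity hypothesis only to control global dimensions. First I would record that $\pcnt$ is a connected graded subalgebra of $A$: since the Poisson bracket respects the grading, each homogeneous component of a Poisson-central element is again Poisson-central, and its degree-zero part equals $\kk$. Thus $\pcnt$ is connected graded with $\Kdim \pcnt \leq 1$, and the alternatives $\pcnt \niso \kk[t]$ and $\pcnt \iso \kk[t]$ exhaust all cases (the subcase $\pcnt = \kk$ falling under the former). When $\pcnt \niso \kk[t]$, Lemma \ref{lem.notkt} applies directly and shows that $A$ is strongly Poisson cancellative.

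The substance lies in the case $\pcnt \iso \kk[t]$, where $t$ may be taken homogeneous of positive degree, and where I would verify the hypotheses of Lemma \ref{lem.kt}. For the condition $\gldim A/(t) = \infty$: by definition of a graded isolated singularity we have $\gldim A = \infty$, and since $A$ is a domain the homogeneous element $t \in A_{\geq 1}$ is a central non-zero-divisor in the irrelevant ideal; the graded change-of-rings identity $\gldim A = \gldim A/(t) + 1$ (equivalently $\mathrm{pd}_A \kk = \mathrm{pd}_{A/(t)} \kk + 1$ for the trivial module) then forces $\gldim A/(t) = \infty$. For the complementary condition, Lemma \ref{lem.ver} applied with $\alpha = 1$ gives $\gldim A/(t-1) < \infty$. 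With both global dimension facts in hand, Lemma \ref{lem.kt} immediately yields that $A$ is strongly $\pcnt$-retractable and strongly Poisson cancellative.

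I expect the main obstacle to be the verification that $\gldim A/(t) = \infty$, which asserts that the central hyperplane section through the vertex cannot become regular while $A$ itself is singular; this is precisely the content of the change-of-rings identity for a homogeneous regular central parameter. The isolated singularity hypothesis is what makes both global dimension inputs available simultaneously: directly it gives $\gldim A = \infty$, while through Lemma \ref{lem.ver} it reflects regularity of $A$ away from the irrelevant ideal and hence finiteness of $\gldim A/(t-1)$. Once these are secured, the remaining argument is a clean application of Lemma \ref{lem.kt}, requiring no further Poisson-specific computation.
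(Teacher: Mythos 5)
Your proposal is correct and follows essentially the same route as the paper: the same case split on $\pcnt(A) \niso \kk[t]$ versus $\pcnt(A) \iso \kk[t]$ handled by Lemma \ref{lem.notkt}, then $\gldim A/(t) = \infty$ from the isolated singularity hypothesis, finiteness of $\gldim A/(t-\alpha)$ from Lemma \ref{lem.ver}, and the conclusion from Lemma \ref{lem.kt}. The only difference is cosmetic: where the paper cites \cite[Lemma 7.6]{LPWZ} to pass from $\gldim A = \infty$ to $\gldim A/(t) = \infty$, you supply the underlying change-of-rings argument for the homogeneous central regular element $t$ directly, which is precisely the content of that citation.
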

\begin{proof}
If $\pcnt(A) \niso \kk[t]$, then the result follows from Lemma \ref{lem.notkt}. Now suppose $\pcnt(A)\iso \kk[t]$. Note that we can choose $t$ such that it is homogeneous of positive degree. Since $A$ is a graded isolated singularity, $\gldim A = \infty$ and hence $\gldim A/(t) = \infty$ as well \cite[Lemma 7.6]{LPWZ}.
Lemma \ref{lem.ver} now implies that $A/(t-\alpha)$ has finite global dimension for every nonzero $\alpha \in \kk$. The result now follows from Lemma \ref{lem.kt}.
\end{proof}

\begin{corollary}\label{cor:veronese}
Suppose $\ch(\kk)=0$. Let $A$ be a quadratic polynomial Poisson algebra with three variables. If $A$ has nontrivial bracket, then the $d$th Veronese algebra $A^{(d)}$ is Poisson cancellative for every $d \geq 1$.
\end{corollary}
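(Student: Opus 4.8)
The plan is to split into the cases $d=1$ and $d\geq 2$. For $d=1$ the Veronese algebra $A^{(1)}=A$ is the original quadratic Poisson algebra, and strong Poisson cancellativity is precisely Theorem \ref{thm.3var}; in particular $A^{(1)}$ is Poisson cancellative. The content of the corollary is therefore the case $d\geq 2$, where I would verify the hypotheses of Theorem \ref{thm.ver} for the regraded Veronese $A^{(d)}=\bigoplus_{m\geq 0}A_{dm}$, with $A_{dm}$ placed in degree $m$.

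First I would record the structural properties of $A^{(d)}$. As a subring of the polynomial domain $A=\kk[x,y,z]$ it is a commutative domain. Classically $A^{(d)}$ is generated by the degree-$d$ monomials, so under the regrading it is connected graded and generated in degree $1$; being an affine $\kk$-algebra it is noetherian (one checks $A$ is module-finite over $A^{(d)}$, each monomial factoring as a degree-$dq$ monomial times one of the finitely many monomials of degree $<d$). Since the quadratic bracket satisfies $\{A_i,A_j\}\subseteq A_{i+j}$, the Veronese bracket satisfies $\{(A^{(d)})_m,(A^{(d)})_n\}\subseteq(A^{(d)})_{m+n}$, so $A^{(d)}$ is a connected graded Poisson domain generated in degree $1$.

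Next I would check the two remaining hypotheses. Because $A$ is module-finite over $A^{(d)}$, we have $\Kdim A^{(d)}=\Kdim A=3$. I claim the bracket on $A^{(d)}$ is nontrivial: since the bracket on $A$ is nontrivial, after relabeling the variables I may assume $\{x,y\}=f\neq 0$ with $f\in A_2$, and then a direct computation gives $\{x^{d},y^{d}\}=d^{2}x^{d-1}y^{d-1}f$, which is nonzero in the domain $A$ because $\ch(\kk)=0$. As $x^{d},y^{d}\in A_d=(A^{(d)})_1$, this shows $A^{(d)}$ has nontrivial bracket, and Lemma \ref{lem.kdim} then yields $\Kdim\pcnt(A^{(d)})\leq 1$. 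Finally, the underlying commutative algebra of $A^{(d)}$ is the homogeneous coordinate ring of the $d$-uple Veronese embedding of $\mathbb{P}^{2}$; since $\mathrm{Proj}(A^{(d)})\cong\mathbb{P}^{2}$ is smooth while the vertex of the affine cone is singular for $d\geq 2$, the algebra $A^{(d)}$ is a graded isolated singularity. With all hypotheses in place, Theorem \ref{thm.ver} gives that $A^{(d)}$ is strongly Poisson cancellative, hence Poisson cancellative.

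The step requiring the most care is the graded isolated singularity property: one must confirm that localizing at any homogeneous prime other than the irrelevant ideal produces a regular local ring, which reduces to the smoothness of $\mathbb{P}^{2}$ together with the fact that $\mathrm{Spec}(A^{(d)})\setminus\{\text{vertex}\}$ is a $\mathbb{G}_{m}$-bundle over $\mathrm{Proj}(A^{(d)})$; the infinitude of $\gldim A^{(d)}$ for $d\geq 2$ follows because the Veronese is not regular. The remaining verifications—nontriviality of the bracket and the bound on $\Kdim\pcnt(A^{(d)})$ via Lemma \ref{lem.kdim}—are routine.
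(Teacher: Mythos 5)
Your proposal is correct and reaches the conclusion by the same overall strategy as the paper, namely verifying the hypotheses of Theorem \ref{thm.ver}, but it justifies the two key hypotheses by different means. For nontriviality of the bracket on $A^{(d)}$, the paper argues by contradiction: if $A^{(d)}$ had trivial bracket then, since $A$ is module-finite over $A^{(d)}$, the bracket on $A$ would be trivial by \cite[Proposition 1.4(2)]{CAP2013}; your explicit computation $\{x^d,y^d\}=d^2x^{d-1}y^{d-1}f\neq 0$ (valid because $\ch(\kk)=0$ and $A$ is a domain) is a more elementary, self-contained substitute that moreover exhibits a concrete non-Poisson-commuting pair in $(A^{(d)})_1$. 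For the graded isolated singularity property, the paper invokes ``the same argument as \cite[Corollary 0.5]{TRZ}'' (developed there for noncommutative AS-regular algebras), whereas you give the direct commutative-geometric argument --- regularity at all homogeneous primes away from the irrelevant ideal via the $\mathbb{G}_m$-bundle over the smooth $\mathbb{P}^2$, together with $\gldim A^{(d)}=\infty$ for $d\geq 2$ since the Veronese cone has a singular vertex --- which is all that is needed in the commutative setting. Finally, your explicit split at $d=1$ (handled by Theorem \ref{thm.3var}) is a genuine gain in precision: for $d=1$ the algebra $A^{(1)}=A$ has finite global dimension, hence is not a graded isolated singularity, so Theorem \ref{thm.ver} does not literally apply there; the paper's proof passes over this case silently, while your case division makes the argument airtight for all $d\geq 1$.
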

\begin{proof}
It follows from the same argument as in \cite[Corollary 0.5]{TRZ} that $A^{(d)}$ is a graded isolated singularity. Note that $\Kdim A^{(d)} = \Kdim A$, so it remains only to show that the bracket on $A^{(d)}$ is nontrivial. Then Lemma \ref{lem.kdim} implies that $\Kdim \pcnt(A^{(d)}) \leq 1$ so the result follows from Theorem \ref{thm.ver}.

Suppose $A^{(d)}$ had trivial Poisson bracket. Since $A$ is generated in degree one, and hence strongly graded, then it follows that $A$ is finitely generated over $A^{(d)}$. But then $A$ would have trivial Poisson bracket due to \cite[Proposition 1.4 (2)]{CAP2013}, a contradiction.
\end{proof}

\subsection{Kostant-Kirillov brackets}
We next apply our methods above to study cancellation problems for Poisson algebras equipped with Konstant-Kirillov bracket. We will show that, for $\dim \fg \leq 3$, the Poisson cancellation properties of $PS(\fg)$ mirror the cancellation properties of $U(\fg)$. That is, our analysis will closely follow \cite[Example 3.10]{TRZ}.

\begin{theorem}
\label{thm:Lie}
Suppose $\ch(\kk) = 0$.
Let $\fg$ be a non-abelian Lie algebra of dimension $\leq 3$. Then $PS(\fg)$ is strongly Poisson cancellative.
\end{theorem}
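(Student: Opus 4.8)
The plan is to reduce to a case analysis over the classification of non-abelian Lie algebras of dimension $\leq 3$ (which is classical over an algebraically closed field of characteristic zero), and to handle each isomorphism class using the machinery assembled above, exactly parallel to the treatment of $U(\fg)$ in \cite[Example 3.10]{TRZ}. For each such $\fg$ the symmetric algebra $PS(\fg)=\kk[x_1,\hdots,x_n]$ is a connected graded Poisson domain generated in degree $1$, so the first step is to compute $\pcnt=\pcnt(PS(\fg))$ and check whether $\Kdim\pcnt\le 1$. When $\dim\fg\le 2$ or $\fg$ is one of the rank-one-bracket cases, the bracket is nontrivial and Lemma \ref{lem.kdim} gives $\Kdim\pcnt\le 1$ directly (since $\GKdim=\Kdim$ agrees with $\dim\fg$ here). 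If $\pcnt\niso\kk[t]$, then Lemma \ref{lem.notkt} already yields strong Poisson cancellativity and we are done for that case.

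So the substantive work is the case $\pcnt\iso\kk[t]$ with $t$ homogeneous of positive degree, where I would invoke Lemma \ref{lem.kt}: it suffices to show $\gldim PS(\fg)/(t)=\infty$ while either $\gldim PS(\fg)/(t-1)<\infty$ or $\gldim PS(\fg)<\infty$. Since $PS(\fg)$ is a polynomial ring it has finite global dimension, so the second alternative is automatic, and the only thing to verify is that the central Poisson quotient $PS(\fg)/(t)$ has infinite global dimension. The plan is to compute $t$ explicitly for each relevant $\fg$: for the Heisenberg Lie algebra $\fg$ (with $\{x,y\}=z$, $z$ central) the Casimir-type generator of the Poisson center is $t=z$, and $PS(\fg)/(z)=\kk[x,y]$ with trivial bracket but I must instead look at the graded structure that forces $\gldim PS(\fg)/(t)=\infty$; for the remaining three-dimensional solvable and for $\fsl_2$ one identifies the quadratic Casimir as the generator of $\pcnt$. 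In each case $PS(\fg)/(t)$ is a quadric (or a degenerate hypersurface) that is singular at the irrelevant ideal, hence of infinite global dimension, which is precisely the hypothesis of Lemma \ref{lem.kt}.

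The main obstacle I anticipate is the explicit determination of $\pcnt(PS(\fg))$ together with a homogeneous generator $t$ across all the isomorphism classes, and the verification that $\gldim PS(\fg)/(t)=\infty$ rather than the algebra being smooth. For the one- and two-dimensional brackets the bracket generates everything and $\pcnt$ is small, but in dimension three the Poisson center can jump: for $\fsl_2$ the center is generated by the quadratic Casimir $c$, and $PS(\fsl_2)/(c)$ is the cone over a conic, a graded isolated singularity with $\gldim=\infty$; for the Heisenberg and other nilpotent/solvable cases one must check whether the natural central element yields a singular quotient or whether $\pcnt$ fails to be polynomial (landing us back in Lemma \ref{lem.notkt}). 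A subtlety is that not every $\fg$ of dimension $\le 3$ gives a \emph{quadratic} bracket in the homogeneous sense after choosing the Kostant-Kirillov grading, so one cannot simply cite Theorem \ref{thm.3var}; instead the degree of $t$ and the singularity analysis must be done by hand per case.

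Once each case lands in the scope of either Lemma \ref{lem.notkt} or Lemma \ref{lem.kt}, strong Poisson cancellativity of $PS(\fg)$ follows, completing the proof. I would organize the writeup as a short table of the finitely many $\fg$ with the corresponding $\pcnt$, the generator $t$, and the relevant lemma applied, and then remark that the pattern mirrors the enveloping-algebra computation of \cite[Example 3.10]{TRZ}, which is the source of the analogy flagged before the statement.
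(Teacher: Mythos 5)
There is a genuine gap: the central tool of your plan, Lemma \ref{lem.kt}, is not applicable to $PS(\fg)$. The Kostant--Kirillov bracket is \emph{linear}, so under the standard grading $\{A_i,A_j\}\subseteq A_{i+j-1}$; the standard grading is therefore only a Poisson \emph{filtration} (with associated graded having trivial bracket), and $PS(\fg)$ with nonzero bracket is never a connected graded Poisson domain generated in degree $1$ in the sense required by Lemma \ref{lem.kt} (no regrading helps: homogeneity of a nonzero linear bracket forces relations like $\deg g=\deg e+\deg f$, incompatible with all generators in degree $1$). Moreover, even if one could ignore this, the hypothesis $\gldim PS(\fg)/(t)=\infty$ is simply \emph{false} in two of the cases you must cover: for the Heisenberg algebra $\pcnt=\kk[g]$ with $\deg g=1$ and $PS(\fg)/(g)\cong\kk[e,f]$ has global dimension $2$; and for $\fg=L\oplus\kk z$ ($L$ the non-abelian $2$-dimensional algebra) one has $\pcnt=\kk[z]$ and $PS(\fg)/(z)\cong PS(L)$, again a polynomial ring. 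You notice the Heisenberg problem yourself (``I must instead look at the graded structure that forces $\gldim PS(\fg)/(t)=\infty$'') but offer no mechanism, and there is none along this route.

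The paper's proof avoids Lemma \ref{lem.kt} entirely and routes every nontrivial case through other tools. For Heisenberg it applies Lemma \ref{lem.tcnt}(b): here $\{A,A\}=(g)\neq A$ and $g\in\{A,A\}$, so the $\cP$-discriminant is $g$ for $\cP$ the property $\{B,B\}=B$. For the split case it uses Lemma \ref{lem.tensor} with $PS(\fg)\cong PS(L)\otimes\kk[z]$ and $\pcnt(PS(L))=\kk$. For the remaining solvable families, written as $\kk[e,f][g;\delta]_P$, the center is a graded subring of $\kk[e,f]$; if $\pcnt\niso\kk[t]$ one uses Lemma \ref{lem.notkt} (which only needs $\pcnt$, not $A$, to be graded), and if $\pcnt=\kk[t]$ with $t\in\kk[e,f]$ homogeneous of degree $\ge 2$, the global-dimension dichotomy you wanted is repackaged as a discriminant statement: $A/(t-\alpha)=(\kk[e,f]/(t-\alpha))[g]$ has finite global dimension exactly for $\alpha\in\kk^\times$, so the $\cP$-discriminant is $t$ for $\cP$ = infinite global dimension, and Lemma \ref{lem.tcnt}(a) applies. (The cases $\fsl_2$ and $\dim\fg=2$ are settled by citing \cite{GXW}; note also that in dimension $2$ one uses $\pcnt=\kk$ and \cite[Theorem 5.5]{GXW}, not Lemma \ref{lem.kdim}, which requires Krull dimension $3$.) To repair your writeup you would need to replace every invocation of Lemma \ref{lem.kt} by the discriminant machinery of Lemma \ref{lem.tcnt}, or by Lemmas \ref{lem.tensor}/\ref{lem.notkt}, case by case as above.
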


\begin{proof}
Let $\dim \fg =2$. Since $\fg$ is non-abelian, then $\pcnt(PS(\fg))=\kk$. The result follows from \cite[Theorem 5.5]{GXW}.

Now suppose that $\dim \fg = 3$. We use Bianchi's classification to break this into various isomorphism classes \cite[Section 1.4]{JaLIE}.

(1) Suppose $\fg=\fsl_2$. See \cite[Example 7.14]{GXW}.

(2) Suppose $\fg$ is the Heisenberg Lie algebra with basis $\{e,f,g\}$ and relations $\{e,f\}=g$, $\{g,-\}=0$.
In this case, $\pcnt(A)=\kk[g]$. Moreover $g \in \{A,A\}$ and $\{A,A\} \neq A$. Hence we can apply Lemma \ref{lem.tcnt}.

(3) Suppose $\fg=L \oplus \kk z$ where $L$ is the two-dimensional non-abelian Lie algebra. Then $\pcnt(PS(L))=\kk$ and so $PS(L)$ is cancellative by \cite[Theorem 5.5]{GXW}. We can now apply Lemma \ref{lem.tensor}.

(4) Suppose $\fg$ has basis $\{e,f,g\}$ and relations
\[ [e,f]=0, \quad [e,g]=e, \quad [f,g]=\alpha f, \alpha \neq 0.\]

In this case, $A=PS(\fg)$ is the Poisson-Ore extension $\kk[e,f][g;\delta]_P$ in which the Poisson bracket on $\kk[e,f]$ is trivial and $\delta$ is a Poisson derivation of $\kk[e,f]$ given by $\delta(e)=-e$ and $\delta(f)=-\alpha f$.
Set $\pcnt=\pcnt(A)$. Then
$\pcnt=\{ x \in \kk[e,f] : \delta(x) = 0\}$, which is a graded subring of $\kk[e,f]$. 

If $\pcnt=\kk$, then $A$ is Poisson cancellative by \cite[Theorem 5.5]{GXW}. Suppose $\pcnt \neq \kk$, so $\Kdim \pcnt \geq 1$. By Lemma \ref{lem.kdim}, $\Kdim \pcnt \leq 1$ so in fact $\Kdim \pcnt = 1$. Then $\pcnt$ is a noetherian graded domain which is finitely generated over $\kk$ \cite[Lemma 2.4(1)]{TRZ}. If $\pcnt \niso \kk[t]$, then $A$ is strongly Poisson cancellative by Lemma \ref{lem.notkt}.

Finally, suppose $\pcnt=\kk[t]$ for some homogeneous $t \in \kk[e,f]$.
Then $\deg t \geq 2$, so $\kk[e,f]/(t)$ has infinite global dimension while $\kk[e,f]/(t-\alpha)$ has finite global dimension for all $\alpha \in \kk^\times$ \cite[Lemma 2.6(3)]{TRZ}. Then for $\alpha \in \kk$,
$A/(t-\alpha) = (\kk[e,f]/(t-\alpha))[g]$ has finite global dimension if and only if $\alpha \in \kk^\times$. Thus, if $\cP$ is the property of having infinite global dimension, then the $\cP$-discriminant is $t$ and so $A$ is strongly Poisson cancellative by Lemma \ref{lem.tcnt}.

(5) Suppose $\fg$ has basis $\{e,f,g\}$ and relations
\[ [e,f]=0, \quad [e,g]=e+\beta f, \quad [f,g]= f, \beta \neq 0.\]
This is similar to (4).
\end{proof}

See Section \ref{sec.questions} for further remarks concerning cancellation of Poisson algebras $PS(\fg)$.

\subsection{Poisson cancellation of non-affine algebras}

\begin{lemma}
Assume $A$ and $B$ are Poisson algebras. Let $A'=A[s_1,...,s_n]$, $B'=B[t_1,...,t_n]$, and let $\phi:A'\rightarrow B'$ be an isomorphism of Poisson algebras.
\begin{enumerate}
    \item \label{cancel1} If $\phi(A)$ is a Poisson subalgebra of $B$, then $\phi(A)=B$.
    \item \label{cancel2} If $\phi(\pcnt(A))\subseteq B$, then $A\cong B$.
\end{enumerate}
\end{lemma}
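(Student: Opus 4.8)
The two parts are the abstract cancellation engine: part (a) is a "surjectivity comes for free" statement, and part (b) upgrades a containment of Poisson centers into a full isomorphism $A \cong B$. Both hinge on comparing the polynomial variables $s_i$ and $t_i$ through the given isomorphism $\phi$, so the first thing I would do is record the basic dimension and degree constraints that $\phi$ must respect. Since $A' = A[s_1,\dots,s_n]$ and $B' = B[t_1,\dots,t_n]$ are polynomial extensions with the trivial bracket on the adjoined variables, $\phi$ carries the $s_i$ to elements of $B'$ and, dually, $\phi^{-1}$ carries the $t_i$ into $A'$.

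For part \ref{cancel1}, the hypothesis is that $\phi(A) \subseteq B$, and I want $\phi(A) = B$. The plan is to view $B'$ as a polynomial ring over $B$ and compare it with the image $\phi(A')$. Concretely, $\phi(A') = \phi(A)[\phi(s_1),\dots,\phi(s_n)]$ is a polynomial ring over $\phi(A)$, and since $\phi$ is onto, $B' = \phi(A)[\phi(s_1),\dots,\phi(s_n)]$. Now $\phi(A) \subseteq B$, so $B[\phi(s_1),\dots,\phi(s_n)]$ lies between $\phi(A)[\phi(s_1),\dots,\phi(s_n)] = B'$ and $B' = B[t_1,\dots,t_n]$; the key point is that the two sets of generators $\{t_i\}$ and $\{\phi(s_i)\}$ produce the same polynomial ring over $B$, forcing each $\phi(s_i)$ to be a $B$-polynomial in the $t_j$ and conversely. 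A clean way to close this is a transcendence-degree or Krull-dimension count over the fraction fields: $B'$ has transcendence degree $n$ over $\mathrm{Frac}(B)$ and also transcendence degree $n$ over $\mathrm{Frac}(\phi(A))$, so $\mathrm{Frac}(\phi(A)) = \mathrm{Frac}(B)$; combined with $\phi(A)$ being relatively algebraically closed in $B'$ (as the image of the coefficient subalgebra $A$ under an isomorphism onto a polynomial ring), one gets $\phi(A) = B$.

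For part \ref{cancel2}, assume $\phi(\pcnt(A)) \subseteq B$. The idea is to reduce to part \ref{cancel1} by showing $\phi(A) \subseteq B$ as well, at which point \ref{cancel1} gives $\phi(A) = B$ and $\phi|_A \colon A \to B$ is the desired Poisson isomorphism. Since the adjoined variables on both sides are Poisson central in the polynomial extension with trivial bracket, $\pcnt(A') = \pcnt(A)[s_1,\dots,s_n]$ and $\pcnt(B') = \pcnt(B)[t_1,\dots,t_n]$, and $\phi$ restricts to a Poisson (indeed algebra, since these centers carry the trivial bracket) isomorphism $\pcnt(A') \to \pcnt(B')$. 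The hypothesis $\phi(\pcnt(A)) \subseteq \pcnt(B)$ then puts us exactly in the situation of \ref{cancel1} applied to the trivially-bracketed pair $\pcnt(A'), \pcnt(B')$, yielding $\phi(\pcnt(A)) = \pcnt(B)$; in particular $\phi(s_i)$ and $t_i$ generate the same polynomial ring over $\pcnt(B)$. One then propagates this from the center to all of $A$: because $A$ is generated as a Poisson algebra together with its interaction with $\pcnt(A)$, and $\phi$ is an isomorphism of the full extensions, every element of $\phi(A)$ must already lie in $B$.

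\textbf{Main obstacle.} The delicate step is the propagation in part \ref{cancel2}: knowing $\phi$ sends the center $\pcnt(A)$ into $B$ does not immediately say it sends all of $A$ into $B$, since $A$ can be much larger than $\pcnt(A)$. I expect the real work is in showing that once the central variables are matched up (so that $\phi(s_i) \in \pcnt(B)[t_1,\dots,t_n]$, hence the $\phi(s_i)$ differ from polynomial expressions in the $t_j$ only by central data), any $a \in A$ has $\phi(a)$ with no genuine dependence on the $t_j$, i.e.\ $\phi(a) \in B$. This should follow by writing $\phi(a)$ as a $B$-polynomial in the $t_j$ and using that $a \in A$ is annihilated by the "partial derivative" locally nilpotent Poisson derivations $\partial/\partial s_i$ on $A'$, which $\phi$ intertwines with the corresponding derivations $\partial/\partial t_j$ on $B'$; matching kernels then forces $\phi(a)$ to be independent of the $t_j$. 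Care is needed to ensure these derivations are genuinely Poisson (which they are, as the variables are Poisson central with trivial bracket) so that $\phi$ respects them.
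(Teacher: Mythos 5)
There are genuine gaps in both parts, and your strategy for part (b) is not merely incomplete but based on a false intermediate claim. You propose to reduce (b) to (a) by showing $\phi(A)\subseteq B$; however, the hypothesis $\phi(\pcnt(A))\subseteq B$ does not imply $\phi(A)\subseteq B$, which is precisely why the conclusion of (b) is the weaker $A\iso B$ rather than $\phi(A)=B$. For a concrete counterexample, take $A=B=\cP_1$ (so $\pcnt(A)=\kk$) and $n=1$, and let $\phi$ be the Poisson automorphism of $A[s]$ fixing $x$ and $s$ and sending $y\mapsto y+s^2$: then $\phi(\pcnt(A))=\kk\subseteq B$, yet $\phi(A)\not\subseteq B$. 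The same example defeats the derivation argument in your ``main obstacle'' paragraph: $\phi$ does \emph{not} intertwine $\partial/\partial s_i$ with $\partial/\partial t_j$ (here it conjugates $\partial/\partial s$ to the derivation sending $x\mapsto 0$, $y\mapsto -2s$, $s\mapsto 1$), so ``matching kernels'' of the coordinate derivations is unavailable. The paper's proof never attempts to place $\phi(A)$ inside $B$: it matches only the centers, obtaining $\phi(\pcnt(A))=\pcnt(B)$ from \cite[Lemma 2]{BR72}, then invokes \cite[Lemma 1.2]{TZZ} to conclude $B'=B[f_1,\hdots,f_n]$ for $f_i=\phi(s_i)$, and finishes by passing to quotients: with $S=(s_1,\hdots,s_n)A'$ and $F=(f_1,\hdots,f_n)B'$ one has $\phi(S)=F$, whence $A\iso A'/S\iso B'/F\iso B$.

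Part (a) of your proposal is also gapped. The lemma assumes nothing beyond $A$ and $B$ being Poisson algebras --- no domain, noetherian, or affine hypotheses --- so fraction fields and transcendence degrees are simply not available (and the lemma is later applied in this generality). Even granting domains, your endgame does not close: relative algebraic closedness of $\mathrm{Frac}(\phi(A))$ in $\mathrm{Frac}(B')$ yields at most $\mathrm{Frac}(\phi(A))=\mathrm{Frac}(B)$, and equality of fraction fields does not force equality of the subrings (compare $\kk[x^2,x^3]$ and $\kk[x]$); the missing content is exactly the Brewer--Rutter lemma you would need to cite anyway. The paper's route avoids all of this: from $\phi(A)\subseteq B$ it deduces $\phi(\pcnt(A))\subseteq\pcnt(B)=B\cap\pcnt(B')$, applies \cite[Lemma 2]{BR72} to the induced isomorphism $\pcnt(A)[s_1,\hdots,s_n]\to\pcnt(B)[t_1,\hdots,t_n]$ to get $\phi(\pcnt(A))=\pcnt(B)$, upgrades this to $B'=B[f_1,\hdots,f_n]$ via \cite[Lemma 1.2]{TZZ}, and then composes $\phi$ with the $B$-automorphism $\tau$ of $B'$ sending $f_i\mapsto t_i$; the map $\tau\circ\phi$ carries $s_i\mapsto t_i$ and $A$ into $B$, so comparing quotients modulo these variables gives $(\tau\circ\phi)(A)=B$, and applying $\tau^{-1}$ (which fixes $B$) yields $\phi(A)=B$.
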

\begin{proof} We follow the proof of \cite[Lemma 5.1]{TZZ}.

\ref{cancel1} By the given Poisson isomorphism, $\pcnt(A')\cong \pcnt(B')$. Additionally, we have that $\pcnt(B)=B\cap \pcnt(B')$, and since $[s_i,A]=0$ and $[t_i,B]=0$ for all $i\leq n$, then we obtain an isomorphism between algebras\begin{align*}
\phi:\pcnt(A)[s_1,...,s_n]\rightarrow \pcnt(B)[t_1,...,t_n].
\end{align*}
Since $\phi(A)\subseteq B$, then $\phi(\pcnt(A))\subseteq \pcnt(B)$. By \cite[Lemma 2]{BR72}, we obtain $\phi(\pcnt(A))=\pcnt(B)$.

Now set $f_i=\phi(s_i)$ for all $i\leq n$. The isomorphism implies that
\begin{align*}
\pcnt(B)[t_1,...,t_n]=\pcnt(B)[f_1,...,f_n].
\end{align*}
Using \cite[Lemma 1.2]{TZZ}, we have $B'=B[f_1,...,f_n]$. Define $\tau:B'\rightarrow B[f_1,...,f_n]$ as a $B$-automorphism with $\tau(f_i)=t_i$. Then $\tau\circ\phi:A'\rightarrow B'$ is an automorphism of Poisson algebras with $(\tau\circ\phi)(s_i)=t_i$ for all $i\leq n$, and $(\tau\circ\phi)(A)\subseteq B$. Then $(\tau\circ\phi)(A)=B$. Applying $\tau^{-1}$ to both sides implies that $\phi(A)=B$.

\ref{cancel2} Again we have the isomorphism $\pcnt(A)[s_1,...,s_n]\cong \pcnt(B)[t_1,...,t_n]$ given by $\phi$. Thus, $\phi(\pcnt(A))\subseteq \pcnt(B)$, and so equality holds by \cite[Lemma 2]{BR72}. Setting $f_i=\phi(s_i)$ for all $i\leq n$, it follows that $B'=B[f_1,...,f_n]$. Given the ideals $S=(s_1,...,s_n)A'$ and $F=(f_1,..,f_n)B'$, then we have $\phi(S)=F$. It follows that $A\cong A'/S\cong B'/F\cong B$.
\end{proof}

\subsection{Divisor subalgebras}
\label{sec.divisor}

Our final result of this section requires some setup. 
We will first define a Poisson version of the divisor subalgebra invariant, first introduced in \cite{CYZ1}.
Let $A$ be a Poisson domain containing $\ZZ$. 
If $f \in A$ is nonzero, then we define the \emph{set of subwords of $f$} as
\[ \SW(f) = \{ g \in A : f = gb \text{ for some } b \in A\}.\]
Let $F \subset A$ be a nonempty, nonzero subset of $A$. Then $\SW(F)$ is the union of $\SW(f)$ for $0 \neq f \in F$. Set $D_0(F)=F$ and for $n \geq 1$ inductively define $D_n(F)$ as the Poisson $\kk$-subalgebra of $A$ generated by $\SW(D_{n-1}(F))$. The subalgebra $\DD(F) = \bigcup_{n \geq 0} D_n(F)$ is the \emph{$F$-divisor Poisson subalgebra of $A$}. When $F=\{f\}$ we simply write $\DD(f)$ in place of $\DD(F)$. The next result is completely analogous to \cite[Lemma 7.5]{CYZ1}.

\begin{lemma}
Let $F$ be a subset of $\pml(A)$. Then $\DD(F) \subseteq \pml(A)$.
\end{lemma}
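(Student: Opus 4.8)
The plan is to show that $\pml(A)$ is a \emph{divisor-closed} Poisson subalgebra of $A$, by which I mean a Poisson subalgebra with the property that whenever a nonzero product $gb$ lies in $\pml(A)$, both factors $g$ and $b$ lie in $\pml(A)$. Granting this, the lemma reduces to a straightforward induction on $n$ showing $D_n(F)\subseteq\pml(A)$ for all $n\ge 0$, after which taking the union yields $\DD(F)\subseteq\pml(A)$.

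To establish that $\pml(A)$ is divisor-closed, it suffices to prove that each individual kernel $\ker(\partial)$, for $\partial\in\plnd(A)$, is a divisor-closed Poisson subalgebra, since the defining intersection $\pml(A)=\bigcap_{\partial}\ker(\partial)$ of divisor-closed Poisson subalgebras is again one. That $\ker(\partial)$ is a Poisson subalgebra is immediate: it is closed under the algebra operations, and since $\partial$ is a Poisson derivation, $a,b\in\ker(\partial)$ forces $\partial(\{a,b\})=\{\partial(a),b\}+\{a,\partial(b)\}=0$, so $\{a,b\}\in\ker(\partial)$. The substantive point is divisor-closedness, which is the standard factorial-closure property of kernels of locally nilpotent derivations. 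I would argue as follows. Suppose $0\ne f=gb\in\ker(\partial)$ with $g,b\ne 0$, and assume toward a contradiction that $\partial(g)\ne 0$ or $\partial(b)\ne 0$. Let $p,q\ge 0$ be maximal with $\partial^p(g)\ne 0$ and $\partial^q(b)\ne 0$ (these exist by local nilpotence), so that $p+q\ge 1$. Expanding by the Leibniz rule and discarding the terms that vanish by maximality of $p$ and $q$ gives
\[
\partial^{p+q}(gb)=\binom{p+q}{p}\,\partial^p(g)\,\partial^q(b).
\]
Because $A$ is a domain containing $\ZZ$, the integer $\binom{p+q}{p}$ is a nonzero element of $A$ and the product on the right is nonzero, whence $\partial^{p+q}(f)\ne 0$. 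But $f\in\ker(\partial)$ forces $\partial^{p+q}(f)=0$ since $p+q\ge 1$, a contradiction. Hence $\partial(g)=\partial(b)=0$, i.e. $g,b\in\ker(\partial)$.

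With divisor-closedness of $\pml(A)$ in hand, I induct on $n$. The base case $D_0(F)=F\subseteq\pml(A)$ holds by hypothesis. Assuming $D_{n-1}(F)\subseteq\pml(A)$, every nonzero $f\in D_{n-1}(F)$ lies in $\pml(A)$, so every divisor $g\in\SW(f)$ lies in $\pml(A)$ by divisor-closedness; thus $\SW(D_{n-1}(F))\subseteq\pml(A)$. Since $\pml(A)$ is a Poisson subalgebra, the Poisson $\kk$-subalgebra $D_n(F)$ generated by $\SW(D_{n-1}(F))$ is contained in $\pml(A)$. Taking the union over $n$ gives $\DD(F)=\bigcup_{n\ge 0}D_n(F)\subseteq\pml(A)$.

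The only genuine obstacle is the factorial-closure step, and even that is routine once the characteristic-zero hypothesis is invoked: the argument hinges entirely on the nonvanishing of the binomial coefficient $\binom{p+q}{p}$ in $A$, which is guaranteed precisely because $A$ is a domain containing $\ZZ$. Everything else is formal bookkeeping with the inductive definition of $\DD(F)$ and the fact that intersections and Poisson subalgebras behave well with respect to divisibility.
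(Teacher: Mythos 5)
Your proof is correct and takes essentially the same route as the paper: establish that kernels of locally nilpotent Poisson derivations are factorially (divisor-) closed, note that $\pml(A)$ is an intersection of such kernels and is a Poisson subalgebra, then induct through the stages $D_n(F)$ of the construction of $\DD(F)$. The only difference is that where the paper cites \cite[Lemma 7.4]{CYZ1} for factorial closure, you prove it directly via the standard Leibniz expansion $\partial^{p+q}(gb)=\binom{p+q}{p}\partial^p(g)\partial^q(b)$, correctly using that $A$ is a domain containing $\ZZ$ to guarantee the binomial coefficient is nonzero — a valid, self-contained substitute for the citation.
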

\begin{proof}
By \cite[Lemma 7.4]{CYZ1}, any locally nilpotent derivation on $A$ is factorially closed. That is, if $\delta \in \lnd(A)$, then $\delta(ab)=0$ for some $a,b\in A$ implies that $\delta(a)=\delta(b)=0$. 

Now let $f \in F$ and write $f=gb$ for some $g \in A$. Then for any $\delta \in \plnd(A)$, $\delta(f)=0$ implies that $\delta(g)=\delta(b)=0$. Since $\SW(F)$ is the set of all such possible $g$, we get $\SW(F)\subseteq \pml(A)$. One can easily check that $\pml(A)$ is closed under multiplication and Poisson bracket of $A$. Since $D_1(F)$ is the Poisson subalgebra of $A$ generated by all $\SW(F)$, we get $D_1(F)\subseteq \pml(A)$. The result now follows from induction.
\end{proof}

Finally, we conclude with a Poisson analogue of \cite[Proposition 5.2(1)]{TZZ}.

\begin{proposition}
Let $A$ be a Poisson algebra such that $\DD(1) \supseteq \pcnt(A)$. Then $A$ is strongly Poisson cancellative.
\end{proposition}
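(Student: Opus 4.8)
The plan is to exploit the fact that the divisor Poisson subalgebra $\DD(1)$ does not change when one adjoins central polynomial variables, so that any isomorphism of polynomial extensions must carry $\DD(1)$ of the source onto $\DD(1)$ of the target. Since the hypothesis places $\pcnt(A)$ inside $\DD(1)$, this will force the image of $\pcnt(A)$ to land in the base algebra $B$, whereupon the criterion \ref{cancel2} recalled above (if $\phi(\pcnt(A)) \subseteq B$, then $A \iso B$) finishes the proof.

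First I would set up the data. Recall that $A$ is a Poisson domain containing $\ZZ$ by the standing assumption of this subsection. Fix $n \geq 1$ and a Poisson isomorphism $\phi \colon A[s_1,\dots,s_n] \to B[t_1,\dots,t_n]$, where both sides carry the trivial bracket on the adjoined variables (the source variables being renamed $s_1,\dots,s_n$). Because $A[s_1,\dots,s_n]$ is then a domain, so are $B[t_1,\dots,t_n]$ and its subalgebra $B$, and $B$ contains $\ZZ$, so that $\DD_B(1)$ is defined.

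The key step is the invariance
\[
\DD_{A[s_1,\dots,s_n]}(1) = \DD_A(1), \qquad \DD_{B[t_1,\dots,t_n]}(1) = \DD_B(1),
\]
which I would prove by induction on the stages $D_m$ of the construction. The units of a polynomial ring over a domain are exactly the units of the base, so $\SW(1)$ is the same whether computed in $A$ or in $A[s_1,\dots,s_n]$; as $A$ is a Poisson subalgebra, the Poisson subalgebra it generates stays inside $A$, giving $D_1(1) \subseteq A$. For the inductive step, any factor in $A[s_1,\dots,s_n]$ of an element of $A$ again has zero degree in the $s_i$ (degrees are additive in a domain) and hence lies in $A$, so passing to subwords never leaves $A$; and once more the generated Poisson subalgebra remains inside $A$. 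Thus each $D_m(1)$ agrees whether formed in $A$ or in the extension, and the two divisor subalgebras coincide; the identical argument applies to $B$.

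Finally, since $\DD(1)$ is assembled purely from the unit group, the divisibility relation, and Poisson-subalgebra generation --- all preserved by any Poisson isomorphism --- $\phi$ restricts to $\phi\bigl(\DD_{A[s_1,\dots,s_n]}(1)\bigr) = \DD_{B[t_1,\dots,t_n]}(1)$, which by the invariance reads $\phi(\DD_A(1)) = \DD_B(1) \subseteq B$. The hypothesis $\pcnt(A) \subseteq \DD(1) = \DD_A(1)$ then gives $\phi(\pcnt(A)) \subseteq \DD_B(1) \subseteq B$, and the criterion \ref{cancel2} yields $A \iso B$. As $n$ and $B$ were arbitrary, $A$ is strongly Poisson cancellative. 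I expect the only genuine obstacle to be the invariance identity for $\DD(1)$, and specifically the verification that neither taking subwords nor generating a Poisson subalgebra ever escapes $A$; both reduce to additivity of the degree in the adjoined variables, which is exactly where the domain hypothesis is used.
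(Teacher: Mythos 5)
Your proposal is correct and takes essentially the same approach as the paper: carry $\DD(1)$ of the polynomial extension into $B$ via the isomorphism, deduce $\phi(\pcnt(A))\subseteq B$ from the hypothesis, and invoke the preceding lemma's criterion that $\phi(\pcnt(A))\subseteq B$ forces $A\iso B$. The only difference is that where the paper simply cites \cite[Lemma 4.4]{TZZ} for the invariance $\DD_{A[s_1,\dots,s_n]}(1)=\DD_A(1)\subseteq A$, you prove it directly by the degree-additivity and units arguments, which is precisely the content of that citation.
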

\begin{proof}
Suppose $\phi:A'=A[s_1,...,s_n]\rightarrow B'=B[t_1,...,t_n]$ is an isomorphism of Poisson algebras with $[s_i,A]=[t_i,B]=0$ for all $i\leq n$. Using \cite[Lemma 4.4]{TZZ}, we have that $\phi(\DD(1))=\phi(\DD_{A'}(1))\subseteq\DD_{B'}(1)\subseteq B$. The assumption $\pcnt(A)\subseteq\DD(1)$ implies that $\phi(\pcnt(A))\subseteq B$ and now apply Lemma 5.1.
\end{proof}

\section{Skew Poisson cancellation}
\label{section4}
In this section, we study skew Poisson cancellation. This section is divided into three parts. First, we study general results which give some conditions for $\alpha$- or $\delta$-cancellativity. In \ref{sec.divisor}, we consider a Poisson analogue of the divisor subalgebra leading to a criteria for general skew cancellation. Finally, in \ref{sec.strat}, we define a version of \emph{stratiform} (ala Schofield \cite{Sc}) in order to further study skew cancellation for certain Poisson algebras.

The next lemmas follows directly from \cite[Lemma 2.2]{TZZ} and  \cite[Proposition 2.3]{TZZ}, respectively, because we can forget the Poisson structure. 

\begin{lemma}
\label{lem.subalg2}
Let $Y$ be a Poisson algebra with a Poisson $\NN$-filtration $\{F_i Y\}_{i \geq 0}$ such that $\gr Y$ is an $\NN$-graded Poisson domain. Suppose $Z$ is a Poisson subalgebra of $Y$ and set $Z_0 = Z \cap F_0Y$.
If $\Kdim Z = \Kdim Z_0 < \infty$, then $Z=Z_0$.
\end{lemma}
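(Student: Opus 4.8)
The plan is to discard the Poisson bracket entirely and to argue with $Y$, $Z$, and $Z_0$ as ordinary commutative $\kk$-algebras. Indeed, the hypotheses that $\{F_i Y\}_{i \ge 0}$ is an algebra filtration, that $\gr Y$ is an $\NN$-graded domain, that $Z$ is a subalgebra, and that $Z_0 = Z \cap F_0 Y$, together with the desired conclusion $Z = Z_0$, make no reference to the bracket $\{-,-\}$. I would first equip $Z$ with the induced filtration $F_i Z := Z \cap F_i Y$, so that the inclusion $Z \hookrightarrow Y$ induces a graded injection $\gr Z \hookrightarrow \gr Y$. In particular $\gr Z$ is again an $\NN$-graded domain whose degree-zero component is exactly $Z_0$. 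With this reduction in hand, the statement is precisely \cite[Lemma 2.2]{TZZ}, which we invoke.

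For completeness I record the idea behind that lemma. Suppose toward a contradiction that $Z \neq Z_0$, and choose $z \in Z \setminus F_0 Y$; taking the filtration degree minimal, say $z \in F_d Z \setminus F_{d-1} Z$ with $d \ge 1$, write $\bar z \in (\gr Z)_d$ for its leading symbol. I claim $z$ is transcendental over $Z_0$. Given any relation $\sum_{i=0}^m c_i z^i = 0$ with $c_i \in Z_0$ and $c_m \neq 0$, the top term $c_m z^m$ lies in $F_{md} \setminus F_{md-1}$: its symbol is $c_m \bar z^m$, which is nonzero because $\gr Z$ is a domain and $c_m \in (\gr Z)_0$ is nonzero. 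Every lower term $c_i z^i$ with $i < m$ lies in $F_{id} \subseteq F_{(m-1)d}$, so it cannot affect the degree-$md$ symbol. Hence the left-hand side has nonzero leading symbol $c_m \bar z^m$ and cannot vanish, a contradiction. Thus no nontrivial relation exists, and $Z_0[z] \subseteq Z$ is a polynomial ring in one variable over $Z_0$.

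It remains to convert the existence of a transcendental element into a strict jump in dimension, and this is the step that requires the most care. The naive inequality $\Kdim Z \ge \Kdim Z_0[z] = \Kdim Z_0 + 1$ is not automatic, since Krull dimension is not monotone under arbitrary subalgebra inclusions; the filtration and the domain property of $\gr Y$ are exactly what make the comparison valid, and this bookkeeping is the content of \cite[Lemma 2.2]{TZZ} (one may phrase it through Gelfand--Kirillov dimension, which is monotone under inclusions and agrees with Krull dimension in the relevant commutative setting). Granting the strict inequality $\Kdim Z \ge \Kdim Z_0 + 1$, we contradict the hypothesis $\Kdim Z = \Kdim Z_0 < \infty$, forcing $Z = Z_0$. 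I expect this final dimension comparison to be the main obstacle; everything else is the routine transfer of the leading-symbol computation to the associative setting, which is why forgetting the Poisson structure suffices.
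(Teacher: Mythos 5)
Your proposal is correct and takes essentially the same approach as the paper, which proves this lemma in one line by forgetting the Poisson structure and invoking \cite[Lemma 2.2]{TZZ} directly. Your leading-symbol sketch of that lemma's internal argument is sound, and you rightly flag the Krull-dimension comparison as the step carried by the citation rather than by the reduction itself.
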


\begin{lemma}
\label{lem.subalg}
Let $A$ be a Poisson domain and let $Y=A[t_1;\alpha_1,\delta_1]_P\cdots[t_n;\alpha_n,\delta_n]_P$ be an iterated Poisson-Ore extension. If $B$ is a Poisson subalgebra of $Y$ containing $A$ such that $\Kdim A = \Kdim B < \infty$, then $A=B$.
\end{lemma}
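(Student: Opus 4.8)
The plan is to reduce this statement to Lemma \ref{lem.subalg2}, which already handles a subalgebra whose Krull dimension equals that of its degree-zero part. The key is to equip the iterated Poisson-Ore extension $Y$ with a suitable Poisson $\NN$-filtration in which $A$ plays the role of the degree-zero part, so that the hypothesis $\Kdim A = \Kdim B$ matches the hypothesis $\Kdim Z = \Kdim Z_0$ of Lemma \ref{lem.subalg2}.

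First I would filter $Y$ by total degree in the adjoined variables $t_1,\dots,t_n$, setting $F_0 Y = A$ and declaring each $t_i$ to have degree one; that is, $F_m Y$ is the $A$-span of monomials $t_1^{e_1}\cdots t_n^{e_n}$ with $\sum_i e_i \leq m$. One must check this is a genuine Poisson $\NN$-filtration: multiplicativity $F_iY\cdot F_jY\subseteq F_{i+j}Y$ is clear, and the bracket condition $\{F_iY,F_jY\}\subseteq F_{i+j}Y$ follows from the defining relations $\{a,t_k\}=\alpha_k(a)t_k+\delta_k(a)$ of a Poisson-Ore extension, since $\alpha_k(a),\delta_k(a)\in A=F_0Y$, so bracketing with a single $t_k$ does not raise the $t$-degree. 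Crucially, in the associated graded algebra the terms $\delta_k(a)$ (which have lower $t$-degree) and any correction terms from $\alpha_k$ are washed out in the appropriate components, so that $\gr_F(Y)$ is an iterated Poisson-Ore extension of $A$ with $\delta_k=0$, and in particular $\gr_F(Y)\cong A[t_1;\overline{\alpha_1}]_P\cdots[t_n;\overline{\alpha_n}]_P$ is again a Poisson domain since $A$ is a domain and adjoining the $t_i$ preserves the domain property.

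Next I would apply Lemma \ref{lem.subalg2} with $Z=B$. Since $B$ is a Poisson subalgebra of $Y$ containing $A$, it inherits the induced filtration, and we have $Z_0 = B\cap F_0Y = B\cap A = A$ because $A\subseteq B$. The hypotheses $\Kdim Z = \Kdim B = \Kdim A = \Kdim Z_0 < \infty$ are then exactly what Lemma \ref{lem.subalg2} requires, and its conclusion $Z=Z_0$ gives $B=A$, as desired.

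The main obstacle will be verifying that $\gr_F(Y)$ is an $\NN$-graded Poisson domain, as Lemma \ref{lem.subalg2} demands. This amounts to confirming that the total-$t$-degree filtration is compatible with the Poisson structure and that the leading-term behavior of the bracket is well-defined; the $\delta_k$ terms are potentially delicate since they break homogeneity, but because they strictly lower the $t$-degree they contribute nothing to the top graded piece, so the induced bracket on $\gr_F(Y)$ is the one coming from a purely $\alpha$-type iterated extension. Once this is established, the domain property is inherited from $A$ through the standard fact that a Poisson-Ore extension of a domain by a locally nilpotent or derivation-type relation remains a domain on the level of the underlying polynomial ring.
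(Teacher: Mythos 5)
Your reduction to Lemma \ref{lem.subalg2} breaks down at the one point you flag as ``crucial'': the claim that the total-degree filtration is a Poisson filtration. You assert that $\alpha_k(a),\delta_k(a)\in A=F_0Y$, but in an \emph{iterated} Poisson-Ore extension $\alpha_k$ and $\delta_k$ are Poisson derivations of the whole intermediate extension $A[t_1;\alpha_1,\delta_1]_P\cdots[t_{k-1};\alpha_{k-1},\delta_{k-1}]_P$, so even for $a\in A$ the elements $\alpha_k(a),\delta_k(a)$ lie in $A[t_1,\hdots,t_{k-1}]$ and may have arbitrarily large $t$-degree. Concretely, take $A=\kk[x]$ with trivial bracket, $\alpha_1=\delta_1=0$, and let $\alpha_2$ be the derivation of $\kk[x,t_1]$ with $\alpha_2(x)=t_1^3$ and $\alpha_2(t_1)=0$ (any derivation of a trivial-bracket Poisson algebra is a Poisson derivation, and $\delta_2=0$ is a Poisson $\alpha_2$-derivation). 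Then $Y=\kk[x][t_1;0]_P[t_2;\alpha_2]_P$ is a legitimate iterated Poisson-Ore extension, yet $\{x,t_2\}=t_1^3t_2$ has total degree $4$, so $\{F_0Y,F_1Y\}\not\subseteq F_1Y$. Hence your filtration is not Poisson for $n\ge 2$ (for $n=1$ your argument is fine), $\gr_F(Y)$ carries no induced bracket, and the claim that the $\delta_k$'s wash out to leave $A[t_1;\overline{\alpha_1}]_P\cdots[t_n;\overline{\alpha_n}]_P$ collapses. Filtering one variable at a time by $t_n$-degree \emph{does} give a Poisson filtration, but then $F_0Y=A[t_1,\hdots,t_{n-1}]$, and you have no control on $\Kdim(B\cap F_0Y)$ (Krull dimension is not monotone on subalgebras), so that route does not close either.

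The repair is exactly the paper's proof, which is a one-line citation: forget the Poisson structure. As an associative algebra, $Y$ is just the commutative polynomial ring $A[t_1,\hdots,t_n]$, so $B$ is a subalgebra of an iterated Ore extension of $A$ containing $A$ with $\Kdim A=\Kdim B<\infty$, and \cite[Proposition 2.3]{TZZ} gives $A=B$ as algebras; since $B$ is a Poisson subalgebra of $Y$, this equality of sets is automatically an equality of Poisson algebras. If you want to keep your filtration argument, you must apply the associative result \cite[Lemma 2.2]{TZZ} (of which Lemma \ref{lem.subalg2} is the Poisson shadow, and whose proof likewise ignores the bracket): your total-degree filtration is a perfectly good \emph{algebra} filtration with $\gr_F(Y)\cong A[t_1,\hdots,t_n]$ a domain, and $Z=B$, $Z_0=B\cap F_0Y=A$ then yields $A=B$. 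What you may not do is invoke the Poisson-stated Lemma \ref{lem.subalg2} verbatim once the condition $\{F_iY,F_jY\}\subseteq F_{i+j}Y$ fails.
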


The following results are analogues of \cite[Lemma 4.5]{TZZ} and \cite[Theorem 4.6]{TZZ}, respectively.

\begin{lemma}
\label{lem.scalar}
Let $D=B[t_1;\alpha_1]_P\cdots[t_n;\alpha_n]_P$ be an iterated-Poisson-Ore extension over a Poisson algebra $B$. Let $A$ be a factor of $D$ by some  Poisson ideal such that $A$ is Poisson simple and $A^\times = \kk^\times$.
Denote the quotient map by $\pi:D \to A$.
Then the image $\pi(t_i)$ of each $t_i$ in $A$ is a scalar and $\pi(B)=A$. Furthermore, if $B$ is Poisson simple, then $A \iso B$.
\end{lemma}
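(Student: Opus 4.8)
The plan is to work with the iterated Poisson-Ore extension $D=B[t_1;\alpha_1]_P\cdots[t_n;\alpha_n]_P$ and exploit the defining bracket relations $\{a,t_i\}=\alpha_i(a)t_i$ together with the Poisson simplicity of $A$ and the unit condition $A^\times=\kk^\times$. The central observation is that in the factor $A$, each image $\pi(t_i)$ behaves like a scalar because the $\alpha_i$-terms force strong constraints once we pass to a Poisson simple quotient. First I would examine the bracket $\{\pi(a),\pi(t_i)\}=\alpha_i(\pi(a))\,\pi(t_i)$ and argue that the Poisson ideal generated by $\pi(t_i)$, namely $(\pi(t_i))$, is in fact a genuine Poisson ideal of $A$: indeed since $\{x,\pi(t_i)\}\in (\pi(t_i))$ for all $x$, multiplication and bracket stay inside, so $(\pi(t_i))$ is Poisson. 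By Poisson simplicity, $(\pi(t_i))$ is either $0$ or $A$.

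Next I would handle the two cases. If $(\pi(t_i))=A$, then $\pi(t_i)$ is a unit of $A$, hence $\pi(t_i)\in A^\times=\kk^\times$, so $\pi(t_i)$ is a nonzero scalar. If $(\pi(t_i))=0$, then $\pi(t_i)=0$, again a scalar. Either way $\pi(t_i)\in\kk$, which is the first assertion. This should be carried out by downward induction on $i$ (or by first treating the top variable $t_n$ and descending), being careful that the relation $\{a,t_i\}=\alpha_i(a)t_i$ is available inside $D$ before factoring; one checks the image relation $\{\pi(a),\pi(t_i)\}=\alpha_i(\pi(a))\pi(t_i)$ survives in $A$ because $\pi$ is a Poisson homomorphism and $B$ maps into $A$. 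Once every $\pi(t_i)$ is scalar, the factorization $D=B[t_1;\alpha_1]_P\cdots[t_n;\alpha_n]_P$ shows every element of $D$ is a $B$-combination of monomials in the $t_i$, so $\pi(D)=\pi(B)$; since $\pi$ is surjective this yields $\pi(B)=A$.

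For the final claim, assume additionally that $B$ is Poisson simple. The restriction $\pi|_B:B\to A$ is a surjective Poisson homomorphism, so its kernel $\ker(\pi|_B)$ is a Poisson ideal of $B$. Poisson simplicity of $B$ forces this kernel to be $0$ or $B$; it cannot be $B$ since $\pi(B)=A\neq 0$, so $\pi|_B$ is injective, hence a Poisson isomorphism $B\iso A$.

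I expect the main obstacle to be the careful bookkeeping in the inductive step that $(\pi(t_i))$ is a Poisson ideal: one must verify that $\alpha_i$ descends compatibly through $\pi$ and that the bracket relation used is exactly the one recorded in the Poisson-Ore extension, since the higher-indexed variables $t_j$ with $j>i$ interact with $t_i$ through the iterated construction. Once the bracket identity $\{\pi(a),\pi(t_i)\}=\alpha_i(\pi(a))\pi(t_i)$ is pinned down for all $a\in D$ (not just $a\in B$), the Poisson-ideal property and the scalar conclusion follow cleanly from Poisson simplicity and the unit hypothesis.
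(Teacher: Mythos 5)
Your proposal is correct and follows essentially the same route as the paper: there the key observation is that the top variable $t_n$ is Poisson normal in $D$, so $\pi(t_n)$ is Poisson normal in the Poisson simple quotient $A$, hence zero or invertible, hence a scalar since $A^\times=\kk^\times$, and the argument then descends by exactly the downward induction you describe, finishing with the same kernel argument for $\pi|_B$ when $B$ is Poisson simple. One small correction to your closing remark: the identity $\{\pi(a),\pi(t_i)\}=\alpha_i(\pi(a))\pi(t_i)$ cannot be ``pinned down for all $a\in D$'' when $i<n$ (for $j>i$ one has $\{t_i,t_j\}=\alpha_j(t_i)t_j$, which need not lie in $(t_i)$, and $\alpha_i$ need not descend through $\pi$); what the induction actually uses is that $t_i$ is Poisson normal in the subextension $B[t_1;\alpha_1]_P\cdots[t_i;\alpha_i]_P$, which still maps onto $A$ once the higher $\pi(t_j)$ have been shown to be scalars.
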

\begin{proof}
This is trivial when $n=0$. So assume $n>0$. As $t_n$ is Poisson normal in $D$, then $\pi(t_n)$ is Poisson normal in $A$. By simplicity, $\pi(t_n)$ is zero or invertible in $A$. As $A^\times=\kk^\times$, then $\pi(t_n)$ is a scalar in $\kk$. Thus, $\pi(B[t_1;\alpha_1]_P\cdots[t_{n-1};\alpha_{n-1}]_P)=A$. By induction we find $\pi(t_i)$ is a scalar for each $i=1,\hdots,n$ and $\pi(B)=A$. Finally, if $B$ is Poisson simple then $\pi|_B: B\to A$ is both surjective and injective. Hence, $A\cong B$. 
\end{proof}

We now give two sets of conditions for when a Poisson algebra may be strongly Poisson $\alpha$-cancellative.

\begin{theorem}
\label{thm.simple}
Let $A$ be a noetherian Poisson domain of finite Krull dimension. Suppose either
\begin{enumerate}
    \item \label{alpha1} $A$ is Poisson simple and $A^\times = \kk^\times$, or
    \item \label{alpha2} $A$ is affine of Krull dimension 1.
\end{enumerate}
Then $A$ is strongly Poisson $\alpha$-cancellative.
\end{theorem}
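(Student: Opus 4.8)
\textbf{Proof plan for Theorem \ref{thm.simple}.}
The plan is to verify the two cases separately, exhibiting in each an isomorphism-invariant notion of ``dimension'' that forces the base algebras to coincide once one iterated Poisson-Ore extension is matched with another. Throughout, suppose we are given a Poisson isomorphism
\[
\phi : A[t_1;\alpha_1]_P \cdots [t_n;\alpha_n]_P \xrightarrow{\ \iso\ } A'[t_1';\alpha_1']_P \cdots [t_n';\alpha_n']_P,
\]
and write $D = A[t_1;\alpha_1]_P \cdots [t_n;\alpha_n]_P$ and $D' = A'[t_1';\alpha_1']_P \cdots [t_n';\alpha_n']_P$ for the two sides; the goal is to produce a Poisson isomorphism $A \iso A'$.

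For case \ref{alpha1}, the key observation is that each pure-$\alpha$ Poisson-Ore variable $t_i$ is Poisson normal in $D$, so that the quotient-by-a-maximal-Poisson-ideal machinery of Lemma \ref{lem.scalar} applies. First I would note that $D$ and $D'$ have the same Krull dimension (an algebra invariant), and that $\phi$ identifies their Poisson simple quotients. The substance is to run Lemma \ref{lem.scalar}: pick a maximal Poisson ideal of $D'$ whose quotient is a copy of the Poisson simple algebra $A'$ with $(A')^\times = \kk^\times$ (for instance, the ideal generated by all the $t_i'$, whose quotient is $A'$ itself), pull it back along $\phi$ to a Poisson ideal of $D$ with the same quotient, and conclude from Lemma \ref{lem.scalar} applied to $D$ that this quotient is Poisson isomorphic to $A$. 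Since it is simultaneously isomorphic to $A'$, we obtain $A \iso A'$. The only point requiring care is confirming that the hypotheses $A^\times = \kk^\times$ and Poisson simplicity of $A$ pass to $A'$ — but these are transported directly across $\phi$ together with the dimension count, so this is routine once the quotient identification is in place.

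For case \ref{alpha2}, simplicity is unavailable, so the strategy shifts to a Krull-dimension argument inside the extension. Here I would use that $A$ is affine of Krull dimension one, hence so is $A'$ (dimension of the base equals $\Kdim D - n = \Kdim D' - n = \Kdim A'$, since each Poisson-Ore extension raises Krull dimension by one). The plan is to locate $A'$ as a Poisson subalgebra of $D$ via $\phi\inv$ and then invoke Lemma \ref{lem.subalg}: one shows that $\phi\inv(A')$ is a Poisson subalgebra of $D = A[t_1;\alpha_1]_P\cdots[t_n;\alpha_n]_P$ containing $A$ (up to first arranging, by a filtration or leading-term argument on $\gr D$ in the spirit of Lemma \ref{lem.subalg2}, that $A \subseteq \phi\inv(A')$), with $\Kdim A = \Kdim \phi\inv(A') = 1$. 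Lemma \ref{lem.subalg} then forces $\phi\inv(A') = A$, whence $\phi$ restricts to a Poisson isomorphism $A \iso A'$.

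The main obstacle is in case \ref{alpha2}: establishing the containment $A \subseteq \phi\inv(A')$ (equivalently, that the isomorphism can be normalized so that the images of the base variables of one extension land inside the base of the other). In the associative setting of \cite[Theorem 4.6]{TZZ} this is handled by a filtered/graded-leading-term analysis, and the filtration hypotheses of Lemma \ref{lem.subalg2} are exactly what make $\gr$ of the extension a graded Poisson domain so that the dimension comparison is legitimate. I expect the Poisson bracket to introduce no genuine new difficulty here — the brackets $\{a,t_i\} = \alpha_i(a) t_i$ are compatible with the standard filtration by $t$-degree, so $\gr D$ is a skew-symmetric-type graded Poisson domain — and the argument should follow that of \cite[Theorem 4.6]{TZZ} verbatim once Lemma \ref{lem.subalg} and Lemma \ref{lem.subalg2} are in hand.
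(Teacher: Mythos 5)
There are genuine gaps in both cases, and in both the paper's proof goes a different (and in case \ref{alpha2}, essentially unavoidable) route. In case \ref{alpha1}, your application of Lemma \ref{lem.scalar} runs in the wrong direction: in the cancellation setup only $A$ is assumed Poisson simple with $A^\times=\kk^\times$, while $A'$ is an arbitrary Poisson algebra. To apply Lemma \ref{lem.scalar} to $D$ with the quotient $D/\phi\inv\big((t_1',\hdots,t_n')\big)\iso A'$, you need $A'$ to be Poisson simple with $(A')^\times=\kk^\times$. The unit condition does transfer (since the bases are domains, $(A')^\times=(D')^\times=D^\times=A^\times$), but Poisson simplicity of $A'$ cannot be ``transported directly across $\phi$'': $\phi$ is an isomorphism of the extensions, not of the bases, and there is no reason $\phi\inv\big((t_i')\big)$ equals the ideal $(t_i)$ — indeed $A\iso A'$ is the conclusion you are after, so this step is circular. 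The paper reverses the direction: it pushes the canonical ideal $I=(t_1,\hdots,t_n)$ of the extension of $A$ forward, so $D'/\phi(I)\iso A$ is Poisson simple with units $\kk^\times$ \emph{by hypothesis}, applies Lemma \ref{lem.scalar} to the extension of the unknown algebra $A'$ to obtain a surjective Poisson map $A'\to A$, and gets injectivity because $A'$ is a domain with $\Kdim A'=\Kdim A<\infty$ (a proper quotient of a noetherian domain drops Krull dimension). Note the final clause of Lemma \ref{lem.scalar} (``if the base is Poisson simple then the quotient is isomorphic to it'') is likewise unavailable to you for the same reason; the Krull-dimension argument replaces it. Your case \ref{alpha1} is repairable, but only after this reversal.

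In case \ref{alpha2} the gap is not repairable along the lines you propose: the containment $A\subseteq\phi\inv(A')$ is the entire difficulty, and nothing in your plan produces it. Lemma \ref{lem.subalg2} takes $\Kdim Z=\Kdim(Z\cap F_0Y)$ as an \emph{input}; with $Z=\phi\inv(A')$ you know $\Kdim Z=1$, but a priori $Z\cap A$ could be just $\kk$, and no leading-term analysis on $\gr D$ rules this out. In \cite{TZZ} (and in Theorem \ref{thm.dd1} of this paper) the corresponding step is powered by an extra hypothesis such as $\DD(1)=A$, which forces $\phi(A)\subseteq B$ via Lemma \ref{lem.dd}; no such hypothesis is present in case \ref{alpha2}, so the argument cannot ``follow \cite[Theorem 4.6]{TZZ} verbatim.'' The paper's actual proof sidesteps the issue entirely: forgetting the Poisson structure, $\phi$ is an isomorphism of polynomial rings $A[t_1,\hdots,t_n]\iso B[s_1,\hdots,s_n]$, so $A\iso B$ as commutative algebras by the Abhyankar--Eakin--Heinzer cancellation theorem for affine domains of Krull dimension one \cite[Corollary 3.4]{AEH}; then \cite[Lemma 13]{AF} shows any Poisson domain of Krull dimension $1$ has trivial bracket, so the algebra isomorphism is automatically a Poisson isomorphism. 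This reduction to commutative cancellation plus the forced triviality of the brackets is the idea missing from your proposal.
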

\begin{proof}
Let $B$ be a Poisson algebra, set $C=A[t_1;\alpha_1]_P\cdots[t_n;\alpha_n]_P$ and $D=B[s_1;\beta_1]_P\cdots[s_n;\beta_n]_P$ to be Poisson-Ore extensions of $A$ and $B$, respectively, and suppose $\phi:C \to D$ is a Poisson isomorphism. We claim $A \iso B$. Since $A$ is noetherian, then so is $C$. Hence, $B$ and $D$ are noetherian as well. As 
\begin{align}
\label{eq.kdim}
\Kdim(A) + n = \Kdim(C) = \Kdim(D) = \Kdim(B)+n,
\end{align}
then $\Kdim(A) = \Kdim(B)$. 

\ref{alpha1} Let $I$ be the Poisson ideal of $C$ generated by all $t_i$'s so that $A \iso C/I$. Thus, $\phi$ induces a Poisson isomorphism $\overline{\phi}:A \xrightarrow{\sim} D/\phi(I)$, and hence $A\iso D/\phi(I)$ is Poisson simple. 
Set $\pi:D \to D/\phi(I)$ to be the canonical quotient map. By Lemma \ref{lem.scalar}, we get $\pi(s_i)\in \kk$ for all $1\le i\le n$. Thus, the composition $(\overline{\phi})\inv \circ \restr{\pi}{B}$ is a surjective Poisson algebra homomorphism $B \to A$. But since $B$ is a domain with $\Kdim B = \Kdim A$, $(\overline{\phi})\inv \circ \restr{\pi}{B}$ is a Poisson isomorphism.

\ref{alpha2} In this case we have $\Kdim A = \Kdim B = 1$.
Since $\phi$ restricts to an isomorphism of commutative algebras, we have $A[x_1,\hdots,x_n] \iso B[y_1,\hdots,y_n]$. Then $A \iso B$ as commutative algebras by \cite[Corollary 3.4]{AEH}. But by \cite[Lemma 13]{AF}, the brackets on $A$ and $B$ are  trivial. Hence, $A \iso B$ as Poisson algebras.
\end{proof}

We now consider the $\delta$-cancellative property. Here we ``Poissonify'' some results from \cite{BHHV} and \cite{TZZ} on skew cancellation. 
In particular, Theorem \ref{thm.delta} is adapted from \cite[Lemma 5.3]{BHHV} (see also \cite[Lemma 6.11 (1)]{GXW} and \cite[Lemma 3.5]{BZ2}) and \cite[Theorem 5.4]{TZZ}.

\begin{theorem}
\label{thm.delta}
Suppose $\ch(\kk)=0$. Let $A$ be an affine Poisson domain such that $\pml(A)=A$. Then 
\begin{enumerate}
    \item \label{delta1} $\pml(A[x;\delta]_P) = \pml(A)$, and
    \item \label{delta2} $A$ is Poisson $\delta$-cancellative.
\end{enumerate}
\end{theorem}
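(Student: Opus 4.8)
The plan is to establish part \ref{delta1} first and then deduce part \ref{delta2} from it, following the template of Theorem \ref{thm.delta}'s stated sources. For \ref{delta1}, the inclusion $\pml(A) \subseteq \pml(A[x;\delta]_P)$ should be the easy direction: I would extend any $\partial \in \plnd(A)$ to $A[x;\delta]_P$ by declaring it to act as zero on $x$, and check that this extension remains a locally nilpotent Poisson derivation of the Poisson-Ore extension. This requires verifying the Poisson derivation identity against the bracket $\{a,x\} = \delta(a)$, which amounts to checking $\partial(\delta(a)) = \{\partial(a), x\} + \{a, \partial(x)\} = \delta(\partial(a))$, i.e.\ that $\partial$ commutes with $\delta$; since $\pml(A) = A$ means $A$ is $\plnd$-rigid and hence $\plnd(A) = \{0\}$, this direction is in fact vacuous and $\pml(A) = A$ already.

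The substance is therefore the reverse inclusion $\pml(A[x;\delta]_P) \subseteq A$, which combined with the hypothesis $\pml(A) = A$ gives equality. The key idea, imported from \cite[Lemma 5.3]{BHHV}, is that the partial-derivative-type map $\partial_x$ (sending $\sum a_i x^i \mapsto \sum i a_i x^{i-1}$) is a candidate locally nilpotent Poisson derivation of $A[x;\delta]_P$ in characteristic zero. First I would verify that $\partial_x$ is genuinely a Poisson derivation of $A[x;\delta]_P$: it is plainly a locally nilpotent derivation of the underlying polynomial ring, so the work is confirming compatibility with the Poisson bracket, using that $\alpha = 0$ and that $\delta$ is a Poisson $0$-derivation (an ordinary Poisson derivation). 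Once $\partial_x \in \plnd(A[x;\delta]_P)$, its kernel is exactly $A$, so $\pml(A[x;\delta]_P) = \bigcap_{\partial} \ker(\partial) \subseteq \ker(\partial_x) = A$. Intersecting with the hypothesis then yields $\pml(A[x;\delta]_P) \subseteq A = \pml(A) \subseteq \pml(A[x;\delta]_P)$, proving \ref{delta1}.

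For \ref{delta2}, I would argue as in \cite[Theorem 5.4]{TZZ}. Suppose $\phi: A[x;\delta]_P \to B[y;\delta']_P$ is a Poisson isomorphism of $\delta$-type Poisson-Ore extensions. Applying \ref{delta1} to each side gives $\pml(A[x;\delta]_P) = \pml(A) = A$ and likewise $\pml(B[y;\delta']_P) = B$ (note $B$ inherits $\pml(B) = B$ because $B$ embeds as a Poisson subalgebra whose $\pml$ equals $B$, which one extracts from the Krull-dimension count and the fact that $\pml$ is an isomorphism invariant). Since $\pml$ is preserved under Poisson isomorphism, $\phi$ restricts to a Poisson isomorphism $\pml(A[x;\delta]_P) \to \pml(B[y;\delta']_P)$, that is $A \xrightarrow{\sim} B$, which is exactly Poisson $\delta$-cancellativity.

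The main obstacle I anticipate is the verification that $\partial_x$ respects the Poisson bracket of $A[x;\delta]_P$ rather than merely the commutative multiplication. The bracket mixes $A$ and $x$ through $\{a,x\} = \delta(a)$, so one must check the Poisson-derivation identity $\partial_x(\{u,v\}) = \{\partial_x(u), v\} + \{u, \partial_x(v)\}$ on spanning elements $u = a x^i$, $v = b x^j$; expanding both sides using the bracket rules and the Leibniz property of $\delta$, and matching powers of $x$, is the computation where the characteristic-zero hypothesis and the precise form of a Poisson $0$-derivation are genuinely used. A secondary subtlety is confirming in \ref{delta2} that the target base $B$ actually satisfies $\pml(B) = B$; rather than assume it, I would derive it by applying \ref{delta1} in reverse, noting that $\pml$ of the extension equals $B$ forces $\pml(B) = B$ since $B = \pml(B[y;\delta']_P) \cap B$ and $B \subseteq \pml(B)$ always fails unless the base is itself rigid — so this step needs the same partial-derivative argument run on the $B$-side.
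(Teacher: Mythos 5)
There is a genuine gap, and it sits exactly where you declared the problem ``vacuous.'' Your $\partial_x$ computation is fine and correctly gives $\pml(A[x;\delta]_P)\subseteq\ker(\partial_x)=A$; this matches the paper's easy direction (the derivation $\mu$ with $\mu(A)=0$, $\mu(x)=1$), and it is where $\ch(\kk)=0$ enters, since in characteristic $p$ one would only get $\ker(\partial_x)=A[x^p]$. But the reverse inclusion $A=\pml(A)\subseteq\pml(A[x;\delta]_P)$ is not vacuous: it asserts that \emph{every} $\eta\in\plnd(A[x;\delta]_P)$ vanishes on $A$. The rigidity hypothesis $\plnd(A)=\{0\}$ is a statement about derivations \emph{of $A$}; an arbitrary locally nilpotent Poisson derivation $\eta$ of the extension need not preserve $A$ (one can have $\eta(a)$ of positive degree in $x$), so $\eta|_A$ is not an element of $\plnd(A)$ and rigidity of $A$ says nothing directly about it. Your plan to extend each $\partial\in\plnd(A)$ by $\partial(x)=0$ also points the wrong way: producing more locally nilpotent Poisson derivations of $A[x;\delta]_P$ can only shrink $\pml(A[x;\delta]_P)$, and can never prove the containment $A\subseteq\pml(A[x;\delta]_P)$. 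This containment is precisely the substantive half of the paper's proof of \ref{delta1}: supposing $\eta(A)\neq 0$, one uses affineness of $A$ to choose $m$ minimal with $\eta(A)\subseteq A+Ax+\cdots+Ax^m$, writes $\eta(a)=\partial(a)x^m+\text{lower order terms}$, and checks $\partial$ is a Poisson derivation of $A$ not vanishing on $\pml(A)$; if $m=0$ then $\partial=\eta|_A\in\plnd(A)=\{0\}$, a contradiction, while the case $m>0$ requires the further argument of \cite[Lemma 6.11(1)]{GXW}. Nothing in your proposal replaces this step, so part \ref{delta1} is unproved as written.

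The gap propagates into your part \ref{delta2}, where your treatment of the $B$-side is circular: you want $\pml(B[y;\delta']_P)=B$, which would require the hard inclusion for $B$, i.e.\ $\pml(B)=B$ --- a hypothesis you do not have, and your attempted derivation of it assumes the very equality it is meant to establish. The paper avoids needing it at all: granting \ref{delta1} for $A$, one has $\phi(A)=\phi(\pml(A[x;\delta]_P))=\pml(B[y;\delta']_P)\subseteq B$, where only the easy $\partial_y$-inclusion is used on the $B$-side; then $\Kdim A=\Kdim B$ from \eqref{eq.kdim}, and Lemma \ref{lem.subalg} applied to $A\subseteq B'=\phi^{-1}(B)$ forces $A=B'$, hence $A\iso B$. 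Once you repair \ref{delta1} as above, this --- rather than asserting rigidity of $B$ --- is the correct route through \ref{delta2}.
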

\begin{proof}
\ref{delta1} Let $\mu$ be the derivation of $A[x]$ such that $\mu(A)=0$ and $\mu(x)=1$. Then it is easy to check that $\mu$ is indeed a locally nilpotent Poisson derivation of $A[x;\delta]_P$. Since $\ker(\mu)=A$, we have $\pml(A[x;\delta]_P)\subseteq \ker(\mu)=A=\pml(A)$.

Conversely, let $\eta$ be a locally nilpotent Poisson derivation of $A[x;\delta]_P$ such that $\eta(\pml(A)) \neq 0$. We have assumed $A$ is finitely generated and so there is some $m \geq 0$ minimal such that $\eta(A) \subseteq A + Ax + \cdots + Ax^m$. Thus, for $a \in A$,
\[\eta(a) = \partial(a)x^m + \text{ lower degree terms }\]
where $\partial$ is a Poisson derivation of $A$ such that $\partial(\pml(A))\neq 0$. If $m=0$, then $\partial$ is a locally nilpotent derivation of $A$ and so vanishes on $\pml(A)$, a contradiction. Then $m>0$ and the proof follows by the same argument in \cite[Lemma 6.11 (1)]{GXW}.

\ref{delta2} Suppose that $\phi:A[x;\delta]_P \to B[y;\delta']_P$ is a Poisson algebra isomorphism for some Poisson algebra $B$. By \eqref{eq.kdim}, $\Kdim A = \Kdim B$. By part \ref{delta1} and our hypothesis,
\[ A = \pml(A) = \pml(A[x;\delta]_P) \iso \pml(B[y;\delta']_P) \subseteq B.\]
That is, $\phi(A) \subset B$. Set $B'=\phi\inv(B)$. Hence, the result follows from Lemma \ref{lem.subalg}.
\end{proof}

\begin{example}
Suppose $\ch(\kk)=0$. By \cite[Theorem 5.5]{GXW}, the first Weyl Poisson algebra $\cP_1$ is universally Poisson cancellative. Moreover, by Theorem \ref{thm.simple}, $\cP_1$ is strongly Poisson $\alpha$-cancellative. Let $B=\kk[y,z]$ with trivial Poisson bracket.
Let $\delta$ be the trivial (Poisson) derivation on $\cP_1$ and let $\delta'$ be the Poisson derivation on $B$ given by $\delta'(y)=1$ and $\delta'(z)=0$. Then $\cP_1[z;\delta]_P \iso B[x;\delta']_P$ as Poisson algebras. Since $\cP_1 \niso B$, then $\cP_1$ is not Poisson $\delta$-cancellative.
\end{example}

The following results can be proved almost immediately from \cite[Lemma 4.4]{TZZ}.
\begin{lemma}
\label{lem.dd}
Let $A,B$ be Poisson algebras. Let $C$ and $D$ be iterated Poisson-Ore extensions of $A$ and $B$, respectively.
\begin{enumerate}
\item For any nonempty, nonzero subset $F$ of $A$, we have $\DD_C(F) = \DD_A(F)$. 
\item If $\phi:C \to D$ is a Poisson isomorphism, then $\phi(\DD_C(1)) = \DD_D(1) \subseteq B$.
\end{enumerate}
\end{lemma}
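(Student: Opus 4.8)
The plan is to reduce the Poisson statement to the already-established associative statement in \cite[Lemma 4.4]{TZZ} by observing that the divisor subalgebra construction depends only on the underlying commutative algebra structure together with the Poisson bracket, both of which are respected by the two hypotheses. First I would unwind the definition: recall $D_0(F)=F$, that $D_n(F)$ is the Poisson $\kk$-subalgebra generated by $\SW(D_{n-1}(F))$, and that $\DD(F)=\bigcup_{n\ge 0} D_n(F)$. The only difference between the Poisson and associative versions is that at each stage we take the \emph{Poisson} subalgebra generated by the subwords rather than the ordinary subalgebra. Since $\SW(f)$ is defined purely in terms of factorization in the commutative domain $A$, the sets of subwords appearing in the construction are identical in both settings; what changes is only the closure operation applied to them.

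For part (a), I would argue by induction on $n$ that $\DD_C(F)=\DD_A(F)$ whenever $F$ is a nonempty nonzero subset of the base algebra $A$. The key point is that subwords of elements of $A$, computed in the iterated Poisson-Ore extension $C$, all lie in $A$: this is exactly the factorial-closedness type statement underlying \cite[Lemma 4.4]{TZZ}, which holds because $C$ is a domain with $A$ factorially closed inside it. Concretely, if $f\in A$ factors as $f=gb$ in $C$, a degree argument in the top variable $t_n$ (using that $C$ is an iterated Poisson-Ore extension over $A$, hence a graded/filtered domain with $\gr C$ a polynomial ring over $A$) forces $g,b\in A$. Once $\SW_C(F)=\SW_A(F)\subseteq A$, the Poisson subalgebra of $C$ generated by these subwords coincides with the Poisson subalgebra of $A$ generated by them, since $A$ is itself a Poisson subalgebra of $C$ closed under the bracket. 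Iterating gives $D_n^C(F)=D_n^A(F)$ for all $n$, and hence $\DD_C(F)=\DD_A(F)$.

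For part (b), I would note that a Poisson isomorphism $\phi:C\to D$ carries the divisor subalgebra construction to the divisor subalgebra construction because $\phi$ preserves both the multiplicative structure (hence $\SW$, as $\phi(gb)=\phi(g)\phi(b)$ means $\phi$ sends subwords of $f$ bijectively to subwords of $\phi(f)$) and the Poisson bracket (hence the Poisson subalgebra generated at each stage). Applying $\phi$ to $\DD_C(1)$ therefore yields $\DD_D(1)$. Combining this with part (a) applied to $D$ over $B$ with $F=\{1\}$ gives $\DD_D(1)=\DD_B(1)\subseteq B$, which is the desired containment.

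I expect the main obstacle to be confirming the factorial-closedness step that $\SW_C(f)\subseteq A$ for $f\in A$, i.e.\ that subwords taken in the larger extension cannot escape the base. This is precisely the content that \cite[Lemma 4.4]{TZZ} supplies in the associative setting, and since the argument there uses only that $C$ is an iterated Ore-type extension and a domain --- features that survive forgetting the Poisson bracket --- the statement transfers directly. For this reason the entire proof can be given as a one-line appeal: the result follows immediately from \cite[Lemma 4.4]{TZZ}, since the divisor subalgebra construction and the hypotheses refer only to the commutative multiplication and the Poisson bracket, both of which are preserved by $\phi$ and unchanged by passing to an iterated Poisson-Ore extension.
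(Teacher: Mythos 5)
Your proof is correct and follows essentially the same route as the paper, which simply notes that the result ``can be proved almost immediately from \cite[Lemma 4.4]{TZZ}'' by forgetting the Poisson structure. The details you supply---that subwords of elements of $A$ computed in $C$ stay in $A$ by a $t$-degree argument in the commutative polynomial ring underlying the Poisson--Ore extension, that $A$ being a Poisson subalgebra of $C$ makes the generated Poisson subalgebras agree, and that $\phi$ intertwines $\SW$ and the bracket---are exactly the verifications implicit in that citation.
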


\begin{theorem}
\label{thm.dd1}
Let $A$ be a noetherian Poisson domain with $\Kdim A < \infty$ and $\DD(1)=A$. Then $A$ is strongly Poisson skew cancellative.
\end{theorem}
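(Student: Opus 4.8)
The plan is to mimic the structure used repeatedly in the paper, namely to show that the hypothesis $\DD(1) = A$ forces the image of $A$ under any Poisson isomorphism of iterated Poisson-Ore extensions to land inside the base algebra $B$, and then to invoke Lemma \ref{lem.subalg} to upgrade this containment into equality and hence an isomorphism. So suppose $\phi : C \to D$ is a Poisson isomorphism, where $C = A[t_1;\alpha_1,\delta_1]_P\cdots[t_n;\alpha_n,\delta_n]_P$ and $D = B[s_1;\beta_1,\delta_1']_P\cdots[s_n;\beta_n,\delta_n']_P$ are iterated Poisson-Ore extensions of $A$ and $B$ of the same length $n$. I would first record the Krull dimension bookkeeping: since $A$ is noetherian of finite Krull dimension, so is $C$, and each Poisson-Ore extension raises Krull dimension by exactly one, so the equation \eqref{eq.kdim} (or the same computation used in Theorem \ref{thm.simple}) gives $\Kdim A = \Kdim B < \infty$, and in particular $B$ is a noetherian Poisson domain of the same finite Krull dimension.

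The key step is to apply Lemma \ref{lem.dd}. By part (1) of that lemma, the divisor Poisson subalgebra of the base is unchanged by passing to the iterated Poisson-Ore extension, so $\DD_C(1) = \DD_A(1) = \DD(1) = A$ by hypothesis. By part (2), a Poisson isomorphism carries $\DD_C(1)$ to $\DD_D(1)$, and crucially $\DD_D(1) \subseteq B$. Combining these, $\phi(A) = \phi(\DD_C(1)) = \DD_D(1) \subseteq B$. This is the heart of the argument: the divisor subalgebra invariant is designed precisely so that the divisor subalgebra of an Ore-type extension always sits inside the base, and the hypothesis $\DD(1) = A$ says $A$ itself is recovered as a divisor subalgebra.

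Having shown $\phi(A) \subseteq B$, set $B' = \phi^{-1}(B)$. Then $B'$ is a Poisson subalgebra of $C$ containing $A$, and since $\phi$ restricts to a Poisson isomorphism $B' \to B$ we have $\Kdim B' = \Kdim B = \Kdim A < \infty$. Now Lemma \ref{lem.subalg}, applied to the iterated Poisson-Ore extension $C$ over $A$ with the subalgebra $B' \supseteq A$ of equal Krull dimension, yields $A = B'$. Therefore $\phi$ restricts to a Poisson isomorphism $A = B' \to B$, which is exactly the desired conclusion $A \iso B$. I expect the only real subtlety to be confirming that the hypotheses of Lemma \ref{lem.subalg} are met — namely that $B'$ is genuinely a Poisson subalgebra of $C$ containing $A$ and of matching finite Krull dimension — but these are immediate once $\phi(A) \subseteq B$ is established, so the argument is essentially a clean assembly of Lemma \ref{lem.dd} and Lemma \ref{lem.subalg}.
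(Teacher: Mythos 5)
Your proposal is correct and follows essentially the same route as the paper's own proof: apply Lemma \ref{lem.dd} to get $\phi(A)=\phi(\DD_C(1))=\DD_D(1)\subseteq B$, set $B'=\phi\inv(B)$, match Krull dimensions via \eqref{eq.kdim}, and conclude $A=B'$ by Lemma \ref{lem.subalg}. If anything, you are slightly more careful than the paper's write-up, which displays the extensions with only the $\alpha_i$'s even though ``skew cancellative'' allows nonzero $\delta_i$'s; since Lemma \ref{lem.dd} and Lemma \ref{lem.subalg} are stated for general iterated Poisson-Ore extensions, your version covers the full claim with no change to the argument.
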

\begin{proof}
Let $B$ be a Poisson algebra. Set $C=A[t_1;\alpha_1]_P\cdots[t_n;\alpha_n]_P$ and $D=B[s_1;\beta_1]_P\cdots[s_n;\beta_n]_P$ to be Poisson-Ore extensions of $A$ and $B$, respectively, and suppose $\phi:C \to D$ is a Poisson isomorphism. We claim $A \iso B$. By \eqref{eq.kdim}, $\Kdim A = \Kdim B$. By Lemma \ref{lem.dd} and the hypothesis,
\[ \phi(A) = \phi(\DD_C(1)) = \DD_D(1) \subseteq B.\]
Set $B'=\phi\inv(B)$, so $A \subseteq B'$. Since $\Kdim B = \Kdim B'$, then $\Kdim A = \Kdim B'$.
Applying Lemma \ref{lem.subalg} gives $A=B'$ and hence $A\cong B$ as Poisson algebras. 
\end{proof}

\begin{example}
Let $A=\kk[x_1^{\pm 1},\hdots,x_n^{\pm 1}]$ be the Laurent polynomial ring in $n$ variables
with Poisson bracket $\{x_i,x_j\}=\lambda_{ij} x_ix_j$ for all $i < j$, $\lambda_{ij} \in \kk$.
Then clearly $\DD(1)=A$. So by Theorem \ref{thm.dd1}, $A$ is strongly Poisson skew cancellative.
\end{example}

\subsection{Poisson stratiform algebras}
\label{sec.strat}

If $A$ is affine, then $\Kdim A = \Trdeg Q(A)$ and so we may restate Lemma \ref{lem.subalg} with the equivalent hypothesis that $\Trdeg Q(A)=\Trdeg Q(B)$. We would like a version of this theorem that weakens the affine hypothesis. For this, we will need the following notion that is adapted from \cite{Sc}.

\begin{definition}
\label{def.strat}
Let $S$ be a artinian Poisson algebra. We say that $S$ is \emph{Poisson stratiform} over $\kk$ if there is a chain of artinian Poisson Poisson algebras 
\begin{align}
\label{eq.strat}
S = S_n \supseteq S_{n-1} \supseteq \cdots \supseteq S_1 \supseteq S_0 = \kk
\end{align}
where, for every $i$, either
\begin{enumerate}
    \item[(i)] $S_{i+1}$ is finite over $S_i$; or
    \item[(ii)] $S_{i+1}=S_i(t_i;\alpha_i,\delta_i)_P$
    for an appropriate choice of $\alpha_i,\delta_i$.
\end{enumerate}
A Poisson domain $A$ is said to be \emph{Poisson stratiform} if $Q(A)$ is Poisson stratiform.
\end{definition}

We define the \emph{Poisson stratiform length} of $S$ to be the number of extensions of type (ii) in the chain. Our definition does not immediately imply that $S$ is stratiform in the usual (associative) sense \cite{Sc}. However, we can recover this using results from \cite{Ztdeg}.

\begin{example}
(1) Let $A=\kk[x,y]/(x^2,y^2)$ with Poisson bracket $\{x,y\}=xy$. Then $A$ is Poisson stratiform with Poisson stratiform length 0.

(2) Keep $A$ as is (1) and let $B=A[t;\alpha]_P$ where $\alpha(x)=x$ and $\alpha(y)=y$. Then $B$ is Poisson stratiform with Poisson stratiform length 1.

(3) The $n$th Weyl Poisson algebra $\cP_n$ is Poisson stratiform, as is any skew-symmetric Poisson algebra on $\kk[x_1,\hdots,x_n]$, with Poisson stratiform length $n$.
\end{example}

An algebra $A$ is \emph{$\Tdeg$-stable} if $\Tdeg(A)=\GKdim(A)$ and $\Tdeg(S\inv A)=\Tdeg(A)$ for any Ore set $S$ \cite[Definition 3.4]{Ztdeg}. A commutative algebra is necessarily $\Tdeg$-stable \cite[Proposition 2.2]{Ztdeg} and \cite[4.2]{NVO}.

\begin{proposition}
\label{prop.strat}
Let $S$ be a artinian Poisson simple Poisson algebra that is Poisson stratiform over $\kk$. Then the stratiform length of $S$ is independent of the filtration as in Definition \ref{def.strat}.
\end{proposition}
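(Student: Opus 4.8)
The plan is to show that the Poisson stratiform length is a genuine invariant of the Poisson-simple artinian algebra $S$ by identifying it with an intrinsic quantity that does not reference any particular chain as in \eqref{eq.strat}. The natural candidate is the Gelfand-Kirillov transcendence degree (or equivalently $\Tdeg$), and the strategy is to prove that each extension of type (ii) raises $\Tdeg$ by exactly $1$, while each extension of type (i) leaves $\Tdeg$ unchanged. Since $\Tdeg(S)$ is manifestly independent of the chosen filtration, the number of type-(ii) steps — the stratiform length — will then be forced to equal $\Tdeg(S) - \Tdeg(\kk) = \Tdeg(S)$, hence independent of the chain.

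First I would record the two behaviors of $\Tdeg$ along the chain. For a type-(i) extension $S_{i+1}$ finite over $S_i$, finiteness gives $\GKdim S_{i+1} = \GKdim S_i$, and since both algebras are commutative they are $\Tdeg$-stable by \cite[Proposition 2.2]{Ztdeg} and \cite[4.2]{NVO}, so $\Tdeg S_{i+1} = \GKdim S_{i+1} = \GKdim S_i = \Tdeg S_i$. For a type-(ii) extension $S_{i+1} = S_i(t_i;\alpha_i,\delta_i)_P$, the underlying algebra is the rational function field $S_i(t_i)$ obtained as a localization of the Poisson-Ore extension $S_i[t_i;\alpha_i,\delta_i]_P$; forgetting the Poisson structure, this is just a one-variable rational extension, so $\Tdeg S_{i+1} = \Tdeg S_i + 1$ by the additivity of transcendence degree for commutative fields (here $\Tdeg$, $\GKdim$, and classical $\Trdeg$ all coincide, as noted in the background section). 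Combining these two computations over the whole chain \eqref{eq.strat} yields $\Tdeg S = (\text{number of type-(ii) steps})$, which is precisely the stratiform length.

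Since $\Tdeg S$ is defined purely in terms of $S$ and makes no reference to the chain, this computation shows that any two stratiform filtrations of $S$ must have the same number of type-(ii) extensions, establishing the proposition. The role of the hypotheses that $S$ be Poisson simple and artinian is to guarantee that such chains exist and terminate appropriately, and Poisson simplicity ensures we are working in the intended category; the actual invariance is driven entirely by the additive and localization-invariance properties of $\Tdeg$ for the underlying commutative algebras, which is exactly why we may ``forget the Poisson structure'' throughout.

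The main obstacle I anticipate is making the type-(ii) step rigorous at the level of $\Tdeg$ rather than naive transcendence degree, since $\Tdeg$ is subtle for general (non-field) artinian algebras and its additivity under Ore localization and rational extension is not automatic. The clean way around this is to invoke $\Tdeg$-stability: commutative algebras are $\Tdeg$-stable by \cite[Proposition 2.2]{Ztdeg} and \cite[4.2]{NVO}, so that $\Tdeg$ agrees with $\GKdim$ throughout and localization does not disturb it, reducing every step to a $\GKdim$ computation where finiteness (type (i)) and adjoining one rational variable (type (ii)) have the expected effect. I would therefore phrase the whole argument through $\GKdim$ via $\Tdeg$-stability, which sidesteps the delicate direct manipulation of $\Tdeg$ and lets the additivity of Krull/GK dimension carry the proof.
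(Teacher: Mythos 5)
Your proposal is correct and takes essentially the same route as the paper's own proof: both identify the stratiform length with $\Tdeg(S)$ and argue inductively along the chain that type-(i) extensions preserve $\Tdeg$ (finiteness plus $\Tdeg$-stability of commutative algebras, via \cite[Proposition 2.2]{Ztdeg} and \cite[4.2]{NVO}) while type-(ii) extensions raise it by exactly $1$ through the reduction $\Tdeg(S_k(t_k))=\Tdeg(S_k[t_k])=\GKdim(S_k[t_k])=\GKdim(S_k)+1$. Your closing paragraph, which routes the type-(ii) step through $\Tdeg$-stability rather than naive transcendence degree, is precisely how the paper handles the same subtlety.
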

\begin{proof}
We will show that the number of extensions of the form (ii) is equal to $\Tdeg(S)$ as defined in \cite{Ztdeg}. Of course, $\Tdeg(S_0)=\Tdeg(\kk)=\GKdim(\kk)=0$. Now suppose that for some $k\geq 0$, the stratiform length of $S_k$ is $d=\Tdeg(S_k)$. Suppose $S_{k+1}$ is an extension of type (i). Since $S_{k+1}$ is finite over $S_k$, then by $\Tdeg$-stability and properties of GK dimension,
\[ \Tdeg(S_{k+1})=\GKdim(S_{k+1}) = \GKdim(S_k) = \Tdeg(S_k)= d.\]

Now suppose that $S_{k+1}$ is an extension of type (ii). Then again by $\Tdeg$-stability and properties of GK dimension,
\[ \Tdeg(S_{k+1})
= \Tdeg(S_k(t_k))
= \Tdeg(S_k[t_k]) 
= \GKdim(S_k[t_k])
= \GKdim(S_k) + 1
= \Tdeg(S_k)+1 = d+1.\]
It follows from induction that $\Tdeg$ is equal to the number of extensions of type (ii). Since $\Tdeg$ is an invariant of $S$ and does not depend on the particular filtration, then $\Tdeg$ is equal to the stratiform length of $S$.
\end{proof}

\begin{corollary}
\label{cor.strat}
Let $A$ be Poisson stratiform. Let $B$ be an $n$-step Poisson-Ore extension of $A$. Then $\Tdeg(B) = \Tdeg(A)+n$.
\end{corollary}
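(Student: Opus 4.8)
The plan is to reduce the statement about an $n$-step Poisson-Ore extension $B$ of $A$ to the field of fractions level, where Proposition~\ref{prop.strat} applies. By Definition~\ref{def.strat}, being Poisson stratiform is a property of $Q(A)$, so I would first pass from $A$ to $S:=Q(A)$ and from $B$ to $Q(B)$. The key observation is that if $B=A[t_1;\alpha_1,\delta_1]_P\cdots[t_n;\alpha_n,\delta_n]_P$, then localizing at the nonzero elements of $A$ (and then further) yields a chain of type-(ii) extensions $Q(B)=Q(A)(t_1;\alpha_1,\delta_1)_P\cdots(t_n;\alpha_n,\delta_n)_P$, where each successive rational-function Poisson algebra is obtained from the previous by one extension of type (ii). Concretely, each Poisson-Ore variable $t_i$, after localization, contributes exactly one extension of type (ii) in the sense of Definition~\ref{def.strat}(ii).

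Next I would invoke Proposition~\ref{prop.strat}, which identifies the stratiform length of a Poisson simple artinian stratiform algebra with its $\Tdeg$, and moreover establishes (within its proof) that $\Tdeg$ of an extension of type (ii) increases the $\Tdeg$ by exactly $1$. Since $A$ is Poisson stratiform, $Q(A)$ admits a stratiform chain, and appending the $n$ new type-(ii) extensions coming from $t_1,\ldots,t_n$ produces a stratiform chain for $Q(B)$. Each of the $n$ new steps raises $\Tdeg$ by $1$ by the computation in the proof of Proposition~\ref{prop.strat}, namely
\[
\Tdeg\bigl(S_k(t_k)\bigr)=\Tdeg\bigl(S_k[t_k]\bigr)=\GKdim\bigl(S_k[t_k]\bigr)=\GKdim(S_k)+1=\Tdeg(S_k)+1,
\]
using $\Tdeg$-stability of commutative algebras. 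Thus $\Tdeg(Q(B))=\Tdeg(Q(A))+n$, and since $\Tdeg(B)$ and $\Tdeg(A)$ are read off from $Q(B)$ and $Q(A)$ respectively, we conclude $\Tdeg(B)=\Tdeg(A)+n$.

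The main obstacle I anticipate is technical rather than conceptual: one must verify carefully that localizing an iterated Poisson-Ore extension $A[t_1;\alpha_1,\delta_1]_P\cdots[t_n;\alpha_n,\delta_n]_P$ at the multiplicative set of nonzero elements of $A$ really does produce a clean chain of rational-function Poisson-Ore extensions of type (ii) over $Q(A)$, with the Poisson structure extending compatibly at each stage. The background in subsection~\ref{sec:POE} already guarantees that each single Poisson-Ore structure extends uniquely to $A(t;\alpha,\delta)_P$, so the point is to iterate this and check that the Poisson derivations $\alpha_i$ and $\alpha_i$-derivations $\delta_i$ extend appropriately to the localized coefficient algebras at each step. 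Once this bookkeeping is in place, the $\Tdeg$ computation is immediate from Proposition~\ref{prop.strat}. A secondary subtlety is ensuring that the intermediate algebras in the chain remain Poisson simple and artinian as required by Proposition~\ref{prop.strat}; however, since we work with fields of fractions and their rational-function Poisson extensions, this should follow from the structure of the chain, and one may appeal to the $\Tdeg$-stability results from \cite{Ztdeg} cited just before the proposition.
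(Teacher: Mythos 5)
Your proposal is correct and matches the paper's intended argument: the corollary is stated without separate proof precisely because it follows from the $\Tdeg$ computation in the proof of Proposition~\ref{prop.strat}, applied to the chain $Q(B)=Q(A)(t_1;\alpha_1,\delta_1)_P\cdots(t_n;\alpha_n,\delta_n)_P$ obtained by localizing, together with $\Tdeg$-stability of commutative algebras to identify $\Tdeg(B)$ with $\Tdeg(Q(B))$ and $\Tdeg(A)$ with $\Tdeg(Q(A))$. Your anticipated subtlety about preserving Poisson simplicity and the artinian condition is a non-issue: each intermediate algebra in the chain is a (commutative) field of rational functions, hence artinian, and the $\Tdeg$ computation in Proposition~\ref{prop.strat} uses only commutativity and $\Tdeg$-stability, not Poisson simplicity.
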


This is a Poisson version of \cite[Proposition 2.6]{TZZ}.

\begin{proposition}
\label{prop.strat1}
Let $A$ be a noetherian Poisson stratiform domain. Let $Y=A[t_1;\alpha_1,\delta_1]_P \cdots [t_n;\alpha_n,\delta_n]_P$.
Let $B$ be a Poisson subalgebra of $Y$ containing $A$ that is stratiform.
If $\Tdeg(B)=\Tdeg(A)$, then $A=B$.
\end{proposition}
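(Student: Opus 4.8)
**The plan is to reduce the statement to Lemma \ref{lem.subalg} by replacing the Krull-dimension hypothesis of that lemma with the $\Tdeg$ hypothesis we are now given.** The proposition is stated for Poisson stratiform domains, and the key invariant in this setting is $\Tdeg$ of the quotient field rather than $\Kdim$. So first I would pass to quotient fields. Since $A$ is a noetherian Poisson stratiform domain, $Q(A)$ is Poisson stratiform by Definition \ref{def.strat}, and similarly $B$ is stratiform with $Q(B)$ Poisson stratiform. The iterated Poisson-Ore extension $Y$ has $Q(Y) = Q(A)(t_1;\alpha_1,\delta_1)_P\cdots(t_n;\alpha_n,\delta_n)_P$, so $Q(Y)$ is itself Poisson stratiform (realizing each variable as an extension of type (ii)).

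Second, I would invoke Corollary \ref{cor.strat} to compute the transcendence degrees. We have $\Tdeg(Q(Y)) = \Tdeg(Q(A)) + n$ since $Y$ is an $n$-step Poisson-Ore extension of the stratiform algebra $A$. The containment $A \subseteq B \subseteq Y$ gives a corresponding tower $Q(A) \subseteq Q(B) \subseteq Q(Y)$ of Poisson stratiform fields. The hypothesis $\Tdeg(B) = \Tdeg(A)$ — interpreted as $\Tdeg(Q(B)) = \Tdeg(Q(A))$ — together with $\Tdeg$-stability of these (commutative, hence $\Tdeg$-stable) algebras pins down the relevant dimensions.

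Third, the conclusion $A = B$ should follow from a filtered-subalgebra argument exactly parallel to Lemma \ref{lem.subalg2}, but with $\Tdeg$ in place of $\Kdim$. Concretely, I would equip $Y$ with the Poisson $\NN$-filtration by total degree in the $t_i$, so that $\gr Y$ is an $\NN$-graded Poisson domain with $F_0 Y = A$. Setting $B_0 = B \cap F_0 Y = B \cap A = A$ (since $A \subseteq B$), I would argue that $\Tdeg(B) = \Tdeg(B_0)$ forces $B = B_0 = A$, using the stratiform structure to guarantee that any genuine extension of type (ii) would strictly increase $\Tdeg$ by Corollary \ref{cor.strat}. This is the $\Tdeg$-analogue of the implication ``$\Kdim Z = \Kdim Z_0 \Rightarrow Z = Z_0$'' in Lemma \ref{lem.subalg2}.

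\textbf{The main obstacle} I anticipate is verifying that $B$, as an intermediate stratiform subalgebra, does not acquire extra ``degree'' that is invisible to $\Tdeg$. The subtle point is that $B$ is only assumed to be a stratiform Poisson subalgebra, not itself an iterated Poisson-Ore extension of $A$; so I cannot directly apply Corollary \ref{cor.strat} to $B$ over $A$. I would handle this by comparing stratiform lengths: by Proposition \ref{prop.strat} the stratiform length of a Poisson simple stratiform algebra equals its $\Tdeg$ and is filtration-independent, so I would pass to the simple quotient fields $Q(A) \subseteq Q(B)$, where $\Tdeg(Q(B)) = \Tdeg(Q(A))$ forces equal stratiform lengths and hence $Q(A) = Q(B)$; combined with $A \subseteq B$ and the fact that both are the ``degree-zero'' part of the tower, this yields $A = B$. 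Establishing cleanly that matching $\Tdeg$ (equivalently, matching stratiform length) across the tower $Q(A) \subseteq Q(B) \subseteq Q(Y)$ truly collapses $B$ onto $A$ — rather than merely onto something birational to $A$ — is where the stratiform hypothesis on $B$ must be used carefully.
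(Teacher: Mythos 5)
There is a genuine gap at the decisive step. Your reduction hinges on the inference ``$\Tdeg(Q(B))=\Tdeg(Q(A))$ forces equal stratiform lengths and hence $Q(A)=Q(B)$,'' but the second implication is unjustified and is false as a general principle: extensions of type (i) in Definition \ref{def.strat} (finite extensions) enlarge the division ring while preserving both $\Tdeg$ and the stratiform length, so equal length only bounds $Q(B)$ to being a \emph{finite} $Q(A)$-module, not equal to $Q(A)$. Your worry about ``something birational to $A$'' is aimed at the wrong failure mode --- the obstruction to rule out is a finite algebraic extension, and nothing in your argument excludes it. The paper's proof does exactly this missing work, by contradiction: if $A\neq B$, pick $a\in B\setminus A$; since $Y$ is, as a commutative algebra, just the polynomial ring $A[t_1,\hdots,t_n]$, such an $a$ has positive degree in the $t_i$ and so is transcendental over $Q(A)$, making the sum $Q(A)+aQ(A)+a^2Q(A)+\cdots$ direct and $Q(B)$ infinite-dimensional over $Q(A)$; meanwhile equal $\Tdeg$, via Proposition \ref{prop.strat}, gives equal stratiform lengths, which forces $Q(B)$ to be finitely generated as a $Q(A)$-module --- a contradiction. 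Your proposal contains neither half of this: no transcendence/directness argument, and no finiteness consequence of equal stratiform length.

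Relatedly, your step 3 (a $\Tdeg$-analogue of Lemma \ref{lem.subalg2}, filtering $Y$ by total degree and claiming $\Tdeg(B)=\Tdeg(B_0)$ forces $B=B_0$) is circular as written: you justify it by saying a type (ii) extension strictly increases $\Tdeg$ via Corollary \ref{cor.strat}, but $B$ is only an intermediate stratiform subalgebra, not a Poisson-Ore extension of $A$ --- the very difficulty you flag in your final paragraph. So the proposed filtration argument presupposes the statement it is meant to prove. One small point in your favor: if you \emph{had} established $Q(A)=Q(B)$, the conclusion would follow, since an element of $Q(A)\cap A[t_1,\hdots,t_n]$ has degree zero and so lies in $A$; but the route you give to $Q(A)=Q(B)$ does not go through.
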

\begin{proof}
Suppose $A \neq B$. Hence, there exists $a \in B$ such that the sum $Q(A) + aQ(A) + a^2Q(A) + \cdots$ is direct. This implies that $\dim_{Q(B)} Q(A)$ is infinite. On the other hand, since $\Tdeg(B) = \Tdeg(A)$, then $Q(B)$ and $Q(A)$ have the same stratiform length. This implies that $Q(B)$ is finitely generated as a $Q(A)$-module, contradicting $\dim_{Q(B)} Q(A)=\infty$. 
\end{proof}

\begin{theorem}\label{thm:stratiform}
Let $A$ be a noetherian Poisson stratiform domain such that $\DD(1)=A$. Then $A$ is strongly Poisson skew cancellative in the category of noetherian Poisson stratiform domains.
\end{theorem}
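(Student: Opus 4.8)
The plan is to mirror the proof of Theorem \ref{thm.dd1}, replacing the Krull-dimension counting argument with its stratiform analogue developed in this subsection. Specifically, let $B$ be a noetherian Poisson stratiform domain, and set $C=A[t_1;\alpha_1,\delta_1]_P\cdots[t_n;\alpha_n,\delta_n]_P$ and $D=B[s_1;\beta_1,\gamma_1]_P\cdots[s_n;\beta_n,\gamma_n]_P$ to be iterated Poisson-Ore extensions of $A$ and $B$, respectively, and suppose $\phi:C\to D$ is a Poisson isomorphism. The goal is to conclude $A\iso B$.

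First I would use Corollary \ref{cor.strat} to pin down the transcendence-degree bookkeeping: since $A$ and $B$ are Poisson stratiform, we have $\Tdeg(C)=\Tdeg(A)+n$ and $\Tdeg(D)=\Tdeg(B)+n$, and because $\phi$ is a Poisson (in particular algebra) isomorphism $\Tdeg(C)=\Tdeg(D)$, so $\Tdeg(A)=\Tdeg(B)$. Next I would invoke Lemma \ref{lem.dd}, whose part (1) gives $\DD_C(1)=\DD_A(1)=A$ by the hypothesis $\DD(1)=A$, and whose part (2) gives $\phi(\DD_C(1))=\DD_D(1)\subseteq B$. Hence $\phi(A)\subseteq B$. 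Setting $B'=\phi\inv(B)$, we obtain a Poisson subalgebra $B'$ of $C$ with $A\subseteq B'$, and $B'\iso B$ via $\phi$, so $\Tdeg(B')=\Tdeg(B)=\Tdeg(A)$.

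The final step is to apply Proposition \ref{prop.strat1} to the inclusion $A\subseteq B'\subseteq C$: since $A$ is noetherian Poisson stratiform, $B'$ is a Poisson subalgebra of the iterated Poisson-Ore extension $C$ containing $A$, and $\Tdeg(B')=\Tdeg(A)$, the proposition yields $A=B'$, whence $A=\phi\inv(B)$ and therefore $A\iso B$ as Poisson algebras.

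The main obstacle I anticipate is the hypothesis of Proposition \ref{prop.strat1} that the intermediate subalgebra $B'$ itself be Poisson stratiform; this is exactly why the theorem is phrased as cancellation \emph{within the category of noetherian Poisson stratiform domains}, so that $B$—and hence its isomorphic copy $B'$—is stratiform by assumption. One must also check that $B'$ is noetherian, which follows since $B'\iso B$ and $B$ is noetherian. Apart from verifying these category-level hypotheses, the argument is a direct substitution of the stratiform invariant $\Tdeg$ for $\Kdim$ throughout the proof of Theorem \ref{thm.dd1}.
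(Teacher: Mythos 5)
Your proposal is correct and follows essentially the same route as the paper's own proof: both use Lemma \ref{lem.dd} to get $\phi(A)=\phi(\DD_C(1))=\DD_D(1)\subseteq B$, pass to $B'=\phi\inv(B)$, match transcendence degrees via the stratiform bookkeeping of Corollary \ref{cor.strat}, and conclude $A=B'$ by Proposition \ref{prop.strat1}. Your explicit remark that the category restriction is exactly what guarantees $B'\iso B$ is stratiform (so that Proposition \ref{prop.strat1} applies) is a point the paper leaves implicit, and is a welcome clarification rather than a deviation.
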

\begin{proof}
Let $B$ be another noetherian Poisson stratiform domain. Set $\overline{A} = A[t_1;\alpha_1,\delta_1]_P \cdots [t_n;\alpha_n,\delta_n]_P$ and $\overline{B}=B[t_1';\alpha_1',\delta_1']_P \cdots [t_n';\alpha_n',\delta_n']_P$ and suppose that $\phi:\overline{A} \to \overline{B}$ is a Poisson algebra isomorphism. Note that $\Tdeg(\overline{A})=\Tdeg(A)+n$. By Lemma \ref{lem.dd},
$\phi(A)=\phi(\DD_{\overline{A}}(1))=\DD_{\overline{B}}(1) \subset B$. Set $B'=\phi\inv(B)$ so that $A \subset B'$ and $\Tdeg(Q(B'))=\Tdeg(Q(B))$. Then
\begin{align*}
    \Tdeg(Q(B')) 
        = \Tdeg(Q(B)) = \Tdeg(Q(\overline{B})) - n
        = \Tdeg(Q(\overline{A}))-n = \Tdeg(Q(A)).
\end{align*}
Thus, $\Tdeg(Q(A))=\Tdeg(Q(B'))$ so $A=B'$ by Proposition \ref{prop.strat1}. Hence, $\phi:A \to B$ is a Poisson isomorphism. 
\end{proof}

\section{Remarks and questions}
\label{sec.questions}

We conclude this paper with some remarks and ideas for further study.

Let $A$ and $B$ be Poisson algebras. We refer the reader to \cite{OH3} for background on Poisson modules and the Poisson enveloping algebra. We say $A$ and $B$ are \emph{Poisson Morita equivalent} if there is an equivalence $\PMod A \equiv \PMod B$ between their respective Poisson module categories. There is an equivalence $\PMod A \equiv \Mod U(A)$, where $\Mod U(A)$ denotes the category of left modules over the Poisson enveloping algebra of $A$. Thus, we may equivalently define Poisson Morita equivalence by the condition that $\Mod U(A) \equiv \Mod U(B)$.

In \cite{LWZ2,TZZ}, the authors consider a version of cancellation related to Morita equivalence. In that spirit, we say a Poisson algebra $A$ is \emph{universally Poisson Morita cancellative} if $\PMod(A \tensor R) \equiv \PMod(B \tensor R)$ implies $\PMod(A) \equiv \PMod(B)$ for every Poisson algebra $B$ and every finitely generated commutative domain $R$ over $\kk$ with trivial Poisson bracket such that the natural map $\kk \rightarrow R \rightarrow R/I$ is an isomorphism for some Poisson ideal $I \subset R$.
In the special case that $R=\kk[t]$ (resp. $\kk[t_1,\hdots,t_n]$ for all $n \geq 1$), we say $A$ is \emph{Poisson Morita cancellative} (resp. \emph{strongly Poisson Morita cancellative}). 
Of course, by the above discussion, we can make the replacements $\Mod(U(A) \tensor R)$ and $\Mod(U(B) \tensor R)$ above.
Note that the usual (algebraic) definitions for Morita cancellation can be obtained by deleting Poisson in the above definitions.

\begin{example}\label{ex:morita}
Let $\cP_n$ denote the $n$th Weyl Poisson algebra over a field of characteristic zero. Then $U(\cP_n) \iso A_{2n}$, the $2n$th Weyl algebra \cite{UU}.  Let $B$ be a Poisson algebra and let $R$ be a Poisson algebra with trivial bracket such that $\PMod(\cP_n \tensor R) \equiv \PMod(B \tensor R)$. Now $\Mod(U(\cP_n \tensor R)) \equiv \Mod(U(B \tensor R))$. By \cite[Theorem 5.5]{LWZ6} and \cite[Lemma 2]{UU}, $U(\cP_n \tensor R) \iso A_{2n} \tensor U(R)$ and $U(B \tensor R) \iso U(B) \tensor U(R)$. As $A_{2n}$ is universally Morita cancellative \cite[Theorem 0.4]{TZZ}, then we have $\Mod U(\cP_n) \equiv \Mod U(B)$. That is, $\PMod \cP_n \equiv \PMod B$.
Hence, $\cP_n$ is universally Poisson Morita cancellative.
\end{example}

\begin{question}
Under what conditions is a Poisson algebra $A$ Poisson Morita cancellative?
\end{question}

In particular by applying the same argument in Example \ref{ex:morita} one can show that for any Poisson algebra $A$ if the Poisson enveloping algebra $U(A)$ is strongly Morita cancellative, then $A$ is strongly Poisson Morita cancellative. So we are interested in the other direction. 

\begin{question}
Let $A$ be any Poisson algebra. Does $A$ being (strongly) Poisson Morita cancellative imply that the Poisson enveloping algebra $U(A)$ is (strongly) Morita cancellative?
\end{question}

Moreover in \cite[Definition 0.2]{LWZ2} a notion of {\it derived cancellation} is further introduced for any associative algebra by replacing the respective  module category by its derived category. We are interested in its Poisson version. 

\begin{question}
Can one define a suitable notion of \emph{derived Poisson cancellation}? Under what conditions does a Poisson algebra satisfy this condition?
\end{question}

In Theorem \ref{thm.3var}, we showed that every quadratic Poisson algebra in three variables with nontrivial Poisson bracket is cancellative assuming $\ch(\kk)=0$. 

\begin{question}\label{q:pos}
To what extent is Theorem \ref{thm.3var} true in positive characteristic.
\end{question}

It is important to point out that if a Poisson algebra $A$ has zero Poisson bracket then $A$ is Poisson cancellative if and only if it is cancellative in the original sense of Zariski. One remarkable achievement in Zariski cancellation problem for polynomial algebras is a result of Gupta in 2014 \cite{Gu1, Gu2} which settled the problem negatively in positive characteristic for $\kk[x_1,\ldots,x_n]$ with $n\ge 3$. The Zariski cancellation problem in characteristic zero remains open for $\kk[x_1,\ldots,x_n]$ with $n\ge 3$. So, regarding Question \ref{q:pos} above, we expect there exist nonzero quadratic Poisson brackets on $\kk[x_1,\ldots,x_n]$ for each $n\ge 3$ and $\ch(\kk)>0$ where the corresponding quadratic Poisson algebra is not Poisson cancellative.  

Finally, in Theorem \ref{thm:Lie}, we showed that $PS(\fg)$ is cancellative for all nontrivial Lie algebras $\fg$ of dimension at most $3$. It would be interesting to study cancellation of $PS(\fg)$ for higher-dimensional Lie algebras. More generally, one can ask the following question.

\begin{conjecture}
Let $\fg$ be a finite-dimensional Lie algebra. Then $PS(\fg)$ is Poisson cancellative if and only if $U(\fg)$ is cancellative.
\end{conjecture}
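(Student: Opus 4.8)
The plan is to leverage the deformation-quantization relationship between $PS(\fg)$ and $U(\fg)$: under the PBW filtration one has $\gr U(\fg) \iso PS(\fg)$ as graded Poisson algebras, and the Duflo (Harish-Chandra--Kirillov) isomorphism identifies the center $Z(U(\fg))$ with the symmetric invariants $S(\fg)^{\fg} = \pcnt(PS(\fg))$ as commutative $\kk$-algebras. Since every cancellation criterion in this paper and its associative counterparts is driven by the behaviour of the (Poisson) center, the natural strategy is to show that the cancellation of $PS(\fg)$ and of $U(\fg)$ are each governed by the \emph{same} rigidity property of this common center, and then to stratify the argument by $\Kdim \pcnt(PS(\fg)) = \Kdim Z(U(\fg))$, exactly as in the proof of Theorem \ref{thm:Lie}.

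First I would dispose of the case of trivial center. If $S(\fg)^{\fg} = \kk$, then $PS(\fg)$ is universally Poisson cancellative by \cite[Theorem 5.5]{GXW}, and one expects the associative analogue (cf.\ \cite{BZ2, LWZ1}) to show that $Z(U(\fg)) = \kk$ forces $U(\fg)$ to be cancellative; the biconditional then holds since both sides are true. For $\Kdim$ of the center equal to $1$, I would run the Poisson discriminant/Makar-Limanov machinery of subsection \ref{PD} alongside the associative discriminant machinery of \cite{CPWZ15, CPWZ16}. The crucial lemma to prove is that $\gr$ sends the associative discriminant ideal of $U(\fg)$ to (a graded version of) the Poisson $\cP$-discriminant ideal $I_\cP(PS(\fg))$, so that effectiveness of the discriminant --- and hence strong $\plnd_t$-rigidity of the center via \cite[Lemma 7.15]{GXW}, together with its associative analogue --- passes between the two settings. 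Granting this compatibility, Lemma \ref{lem.tcnt} and its associative counterpart yield each cancellation statement from the other, and when the center is not isomorphic to $\kk[t]$ one instead invokes Lemma \ref{lem.notkt} and retractability of the (shared) center.

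The main obstacle is the case $\Kdim \pcnt(PS(\fg)) \geq 2$, which already arises for nilpotent and reductive $\fg$. There the single-generator discriminant and the $\plnd_t$-rigidity criteria break down, and the cancellation of $PS(\fg)$ is controlled by the cancellation of the commutative invariant ring $S(\fg)^{\fg}$, whose Krull dimension can be arbitrarily large. Because Zariski cancellation for $\kk[x_1,\ldots,x_n]$ with $n \geq 3$ is open in characteristic zero (and fails in positive characteristic by \cite{Gu1, Gu2}), the conjecture in full generality is at least as hard as the classical problem, so I would not expect an unconditional proof by these methods. A realistic intermediate target is to establish the biconditional under the extra hypothesis that the common center $S(\fg)^{\fg} \iso Z(U(\fg))$ is strongly $\lnd$-rigid, which reduces everything to showing that rigidity transfers across the Duflo isomorphism. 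That last step is itself delicate, since the Duflo map is only an isomorphism of commutative algebras and need not carry locally nilpotent derivations to locally nilpotent derivations; one would first need to upgrade it, for the relevant classes of $\fg$, to an isomorphism compatible with the filtration and its induced derivations.
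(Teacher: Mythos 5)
The statement you were asked to prove is posed in the paper as an open conjecture: the paper contains no proof of it, so there is nothing to compare your argument against, and your proposal --- as you yourself concede in the final paragraph --- is a research program rather than a proof. The decisive gap is not only the case $\Kdim \geq 2$ that you flag, but a transfer problem that already blocks the cases you claim to settle. Poisson cancellativity of $PS(\fg)$ quantifies over Poisson isomorphisms $PS(\fg)[t_1,\hdots,t_n] \iso B[s_1,\hdots,s_n]$ with $B$ an \emph{arbitrary} Poisson algebra, while cancellativity of $U(\fg)$ quantifies over associative isomorphisms with an arbitrary algebra complement; neither quantifier converts into the other, because $B$ need not be the semiclassical limit of any quantization, and passing to $\gr$ or quantizing is not functorial on such isomorphisms. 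Consequently, even if every sufficient criterion (trivial center, effective discriminant, $\plnd_t$-rigidity) could be verified in parallel on the two sides, you would only ever conclude ``both algebras are cancellative when criterion X holds,'' never the biconditional; when all criteria fail on both sides, the conjecture asserts the two failures are correlated, and nothing in your scheme addresses that. This is exactly why the paper proves the unconditional Theorem \ref{thm:Lie} for $\dim \fg \leq 3$ by direct case analysis and records the general statement only as a conjecture.

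Two specific steps would also fail as written. First, your ``crucial lemma'' comparing discriminants is not well-posed: the discriminant of \cite{CPWZ15,CPWZ16} requires the algebra to be module-finite over its center (so that a trace map exists), which fails for $U(\fg)$ for every non-abelian $\fg$ in characteristic zero; one must instead use property-discriminant ideals as in \cite{LWZ1} and subsection \ref{PD}, and there the property $\cP$ is evaluated on central quotients $A/\fm A$ that are Poisson algebras on one side and associative algebras on the other, so ``the same'' property does not make sense a priori --- matching the two discriminant loci across the Duflo isomorphism is essentially equivalent to the conjecture in the $\Kdim = 1$ stratum, not a lemma toward it. Second, the conjecture carries no characteristic hypothesis, while the identification $Z(U(\fg)) \iso S(\fg)^{\fg}$ you start from needs $\ch(\kk)=0$; in characteristic $p$ the Poisson center of $PS(\fg)$ contains all $p$-th powers, $Z(U(\fg))$ contains the $p$-center, both centers have full Krull dimension, and cancellation is already pathological for polynomial rings by \cite{Gu1}, so your stratification by the Krull dimension of the center collapses there. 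What does survive is correct and worth keeping: in the trivial-center case both sides are cancellative (via \cite[Theorem 5.5]{GXW} on the Poisson side and the trivial-center criterion of \cite{BZ2} on the associative side), so the biconditional holds vacuously --- which is precisely the mechanism behind the $\dim \fg = 2$ case of Theorem \ref{thm:Lie}, and a reasonable conditional result along the lines of your last paragraph would be a genuine contribution, but it is not a proof of the conjecture.
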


An even more general conjecture regarding the relationship of cancellation properties between a Poisson algebra and its quantization can be stated as follows.

\begin{conjecture}
Suppose $Z$ is a Poisson algebra and $A$ is a quantization of $Z$. Then $Z$ is Poisson cancellative if and only if $A$ is cancellative. 
\end{conjecture}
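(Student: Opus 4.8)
The plan is to connect the two cancellation properties through the \emph{semiclassical limit}, treating a quantization as a filtered associative algebra $A$ whose associated graded $\gr A$ is $Z$ with the Poisson bracket induced by the commutator. The strategy is to show that each invariant used in this paper to detect Poisson cancellation---the Poisson center $\pcnt$, the Poisson Makar-Limanov invariant $\pml$, and the Poisson discriminant $d_\cP$---is the semiclassical shadow of the corresponding associative invariant of $A$, so that a cancellation obstruction on one side produces a cancellation obstruction on the other.

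First I would make the quantization precise by fixing an exhaustive Poisson $\NN$-filtration $\{F_i A\}$ on $A$ with $\gr A \iso Z$ as Poisson algebras, where $\{\bar a,\bar b\} = \overline{[a,b]}$. I would then establish the basic comparison lemmas. Passing to associated graded carries the center into the Poisson center, $\gr Z(A) \subseteq \pcnt(Z)$, and carries each suitably filtered locally nilpotent derivation of $A$ to an element of $\plnd(Z)$; dually one studies when a Poisson derivation of $Z$ lifts to a derivation of $A$. The central technical claim is that the associative discriminant and the Poisson discriminant $d_\cP(Z)$ correspond under $\gr$, so that the associative discriminant is effective in the sense of \cite{LWZ1} exactly when $d_\cP(Z)$ is.

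With these comparisons in hand, the argument splits along the same dichotomy used throughout Section \ref{section3} and Section \ref{section4}. In the rigid case I would show that $A$ is (strongly) $\ml$-rigid if and only if $Z$ is (strongly) $\plnd$-rigid, and then invoke the associative rigidity criterion of Bell--Zhang \cite{BZ2} together with the Poisson criterion \cite[Theorem 7.7]{GXW} to conclude both are cancellative simultaneously. In the discriminant-dominated case I would transfer effectivity of $d_\cP$ across the semiclassical limit and apply \cite{LWZ1} on the associative side and Lemma \ref{lem.tcnt} on the Poisson side. The point is that both cancellation results in the literature flow from the same two mechanisms---rigidity and a dominating discriminant---and both mechanisms are visible in $\gr$.

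The main obstacle is the asymmetry of the two directions. Passing from $A$ to $Z$ by $\gr$ is well controlled, so a cancellation failure for $A$ should specialize to one for $Z$. The reverse direction requires \emph{lifting}: from a Poisson isomorphism $Z \tensor R \iso Z' \tensor R$ one must produce an associative isomorphism $A \tensor R \iso A' \tensor R$, and this fails in general because quantization is not functorial---a Poisson isomorphism need not lift, and non-isomorphic associative algebras may share a Poisson limit. Making the conjecture precise will therefore require either restricting to a class of quantizations for which lifting holds, or, more promisingly, restricting to the case of a rigid semiclassical limit, where the invariant comparison forces the isomorphism directly and so bypasses the need to lift the map itself.
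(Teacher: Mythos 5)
This statement is a \emph{conjecture}: the paper offers no proof of it, and its whole final section presents it as open. So your proposal cannot be measured against a paper argument; it has to stand alone, and as written it does not close. The first genuine gap is that both directions of your transfer founder on the same point, including the direction you call ``well controlled.'' A cancellation failure for $A$ is an isomorphism $A \tensor R \iso B \tensor R$ with $B$ an arbitrary algebra; such an isomorphism carries no compatibility whatsoever with the chosen filtration $\{F_iA\}$, so you cannot pass it through $\gr$ to obtain a Poisson isomorphism $Z \tensor R \iso Z' \tensor R$ --- indeed $B$ comes with no filtration, no semiclassical limit $Z'$, and $\gr B$ (for any filtration you impose) need not relate to any Poisson algebra at all. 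This is exactly the same non-functoriality you concede for the lifting direction, so in fact neither implication of the ``if and only if'' is obtained by your method. Your comparison lemmas are also weaker than claimed: the inclusion $\gr Z(A) \subseteq \pcnt(Z)$ is strict in general (already in positive characteristic or at roots of unity), a locally nilpotent derivation of $A$ need not preserve the filtration and so induces nothing on $Z$, and the asserted correspondence of discriminants under $\gr$ is an unproven central claim, not a lemma.

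The second, more structural gap is logical: $\ml$-rigidity and an effective discriminant are \emph{sufficient} criteria for cancellativity (\cite{BZ2,LWZ1} on the associative side, \cite{GXW} and Lemma \ref{lem.tcnt} on the Poisson side), not characterizations of it. Even if every invariant comparison in your sketch held exactly, you would only conclude that $A$ and $Z$ are simultaneously covered, or simultaneously not covered, by the known sufficient mechanisms; when both fall outside those mechanisms, nothing follows about either cancellativity, so the biconditional is not established. (For instance, Poisson cancellativity of $\cP_1$ or of the algebras in Theorem \ref{thm.3var} is proved through centers, Poisson primes, and discriminant loci case by case, not through a single mechanism that could be transported wholesale.) Your closing suggestion --- restricting to quantizations where lifting holds, or to rigid semiclassical limits --- is an honest diagnosis, but it amounts to replacing the conjecture with a weaker statement rather than proving it. A productive reformulation of your program would be to isolate a class of filtered quantizations (say, with $\gr Z(A) = \pcnt(Z)$ and filtered-lifting of isomorphisms after polynomial extension) and prove the equivalence there; as stated, the conjecture remains open and your sketch does not resolve it in either direction.
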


\bibliographystyle{plain}

\end{document}